\documentclass[11pt,parskip=half]{scrartcl}
\linespread{1.2}
\usepackage[a4paper,hmargin={2.5cm,2.5cm},vmargin={2.5cm,2.5cm},heightrounded, marginparwidth=2.2cm, marginparsep=0.1cm]{geometry}

\usepackage[utf8]{inputenc}
\usepackage[T1]{fontenc}

\usepackage[leqno]{amsmath}
\usepackage{amssymb,amsthm}
\usepackage{tikz-cd}
\usepackage{xypic}


\theoremstyle{plain}
\newtheorem{theorem}{Theorem}[section]
\newtheorem{lemma}[theorem]{Lemma}
\newtheorem{proposition}[theorem]{Proposition}
\newtheorem{corollary}[theorem]{Corollary}

\theoremstyle{definition}
\newtheorem{definition}[theorem]{Definition}
\newtheorem{examples}[theorem]{Examples}
\newtheorem{example}[theorem]{Example}

\theoremstyle{remark}
\newtheorem{remark}[theorem]{Remark}
\newtheorem*{remark*}{Remark}
\newtheorem{remarks}[theorem]{Remarks}

\RequirePackage[shortlabels]{enumitem}

\newlist{tfae}{enumerate}{1}%
\setlist[tfae,1]{label=(\roman*)}%

\newcommand{\catfont}[1]{\mathsf{#1}}

\newcommand{\SET}{\catfont{Set}}

\newcommand{\AB}{\catfont{Ab}}
\newcommand{\TOP}{\catfont{Top}}


\newcommand{\Arr}{\mathsf{Arr}}

\DeclareMathAlphabet{\mathmybb}{U}{bbold}{m}{n}

\newcommand{\ncatfont}[1]{\mathbb{#1}}

\newcommand{\ncatC}{\ncatfont{C}}

\newcommand{\CE}{\mathcal{E}}
\newcommand{\CF}{\mathcal{F}}

\newcommand{\CK}{\mathcal{K}}

\newcommand{\CM}{\mathcal{M}}
\newcommand{\CN}{\mathcal{N}}

\newcommand{\CP}{\mathcal{P}}

\newcommand{\CW}{\mathcal{W}}

\DeclareMathOperator{\Span}{Span}
\DeclareMathOperator{\Cosp}{Cosp}
\DeclareMathOperator{\Pb}{Pb}
\DeclareMathOperator{\Po}{Po}

\DeclareMathAlphabet{\mathpzc}{OT1}{pzc}{m}{it}

\newcommand{\op}{\mathrm{op}}


\RequirePackage{dsfont}

\newcommand{\CC}{\field{C}}
\newcommand{\field}[1]{\mathds{#1}}


\RequirePackage{mathtools}

\makeatletter

\def\slashedarrowfill@#1#2#3#4#5{%
  $\m@th\thickmuskip0mu\medmuskip\thickmuskip\thinmuskip\thickmuskip
   \relax#5#1\mkern-7mu%
   \cleaders\hbox{$#5\mkern-2mu#2\mkern-2mu$}\hfill
   \mathclap{#3}\mathclap{#2}%
   \cleaders\hbox{$#5\mkern-2mu#2\mkern-2mu$}\hfill
   \mkern-7mu#4$%
}

\newcommand*{\rightmodarrowfill@}{\slashedarrowfill@\relbar\relbar{\raisebox{0pc}{$\hspace{1pt}\circ$}}\rightarrow}
\newcommand*{\xmodto}[2][]{\ext@arrow 0055{\rightmodarrowfill@}{\;#1\;}{\;#2\;}}

\makeatother


\title{The normal decomposition of a morphism\\ in categories without zeros}

\author{Renier Jansen, Muhammad Qasim, Walter Tholen\footnote{Work on this paper began during visits by R. Jansen and M. Qasim at York University in 2023/24. R. Jansen acknowledges support from the University of the Free State for his visit, and W. Tholen acknowledges partial financial assistance under the NSERC Discovery Grants Program (no. 501260) in support of M. Qasim's visit.}  }
\publishers{}
\date{\today}


\usepackage[hypertexnames=false]{hyperref}

\hypersetup{
  colorlinks = true,
  citecolor= [rgb]{0,0.75,0}, 
  urlcolor=[rgb]{0.4,0,0.4}, 
  linkcolor=[rgb]{0.8,0,0} 
}

\begin{document}

\maketitle

\begin{abstract}
\footnotesize{ For a morphism $f:A\to B$ in a category $\CC$ with sufficiently many finite limits and colimits, we discuss an elementary construction of a functorial decomposition
$$\xymatrix{A\ar@{->>}[r] & P_f\ar[r] & N_f\;\ar@{>->}[r] & B}$$
which, if $\CC$ happens to have a zero object, amounts to the standard decomposition
$$\xymatrix{A\ar@{->>}[r] & \mathrm{Coker(ker}f)\ar[r] & \mathrm{Ker(coker}f)\;\ar@{>->}[r] & B}.$$
In this way we obtain natural notions of normal monomorphism and normal epimorphism also in non-pointed categories, as special types of regular mono- and epimorphisms. We examine the factorization behaviour of these classes of morphisms in general, compare the generalized normal decompositions with other types of threefold factorizations, and illustrate them in some every-day categories. The concrete construction of normal decompositions in the slices or coslices of some of these categories can be challenging. Amongst many others, in this regard we consider particularly  the categories of T$_1$-spaces and of groups.} 
\end{abstract}

{\footnotesize \tableofcontents}

{\footnotesize {\em Mathematics Subject Classification:} 18A20; !8B30, 18A32, 18F60.

{\em Keywords:} Normal monomorphism, normal epimorphism, normal decomposition, orthogonal factorization system, threefold factorization, sliced category, free product with amalgamated subgroup.}

\section{Introduction}
\label{sec:some-title}
The purpose of this paper is to demonstrate that the decomposition
$$\xymatrix{A\ar@{->>}[r] & \mathrm{Coker(ker}f)\ar[r] & \mathrm{Ker(coker}f)\;\ar@{>->}[r] & B}$$
of a morphism $f:A\to B$ in a pointed category with kernels and cokernels allows for a useful generalization to the non-pointed context, as provided by any category $\CC$ with (certain) finite limits and colimits. The elementary construction we give leads to natural notions of {\em normal monomorphism} and {\em normal epimorphism} in $\CC$, as a specialization of the notions  of regular mono- and epimorphism, and they assume their usual meaning if $\CC$ is pointed. Hence, also in the non-pointed case, every morphism $f$ factors canonically through a normal monomorphism as a second factor, which we call its {\em normal closure}, as well as through a normal epimorphism as a first factor, called the {\em normal dual closure} of $f$. This terminology caters to the role models for $\CC$, pointed or not. In the pointed category $\mathsf{Grp}$ of groups, the normal closure of $f:A\to B$ is of course given by the least normal subgroup of $B$ containing the image of $f$ (in this paper denoted by $\widehat{\mathrm{Im}(f)}^B$), and in the non-pointed category $\TOP_1$ of T$_1$-spaces it is given by the usual topological closure of the image of $f$ in $B$ (denoted by $\overline{\mathrm{Im}(f)}^B$).

Assigning to a morphism $f$ in $\CC$ its normal closure can be seen as reflecting the object $f$ of the arrow category $\mathsf{Arr}(\CC)$ into the full subcategory $\mathsf{NMono}(\CC)$ with objects all normal monomorphisms in $\CC$.
We therefore begin this article by recalling some known facts about arbitrary classes $\CM$ of morphisms in $\CC$ which, considered as full subcategories of $\mathsf{Arr}(\CC)$, are replete and reflective. Such a class $\CM$ is, of course, closed under limits and therefore stable under pullback, but it is closed under composition only if $\CM$ is part of an orthogonal factorization system $(\CE,\CM)$ in $\CC$.

Our first theorem (Theorem \ref{firsttheorem}) establishes the reflectivity of the class $\CN=\mathsf{NMono}(\CC)$ of normal monomorphisms in $\CC$, and dually the coreflectivity of the class $\CP=\mathsf{NEpi}(\CC)$ of all normal epimorphisms. Combining the two emerging factorizations leads to the {\em normal decomposition} 
$$\xymatrix{A\ar@{->>}[r]^{\pi_f} & P_f\ar[r]^{\kappa_f} & N_f\;\ar@{>->}[r]^{\nu_f} & B}$$
of a morphism $f$, amounting to the cokernel-kernel decomposition in the pointed case. We call the normal decompositions in $\CC$ {\em perfect} if both classes, $\CN$ and $\CP$, are closed under composition. This, of course, is not the case in the pointed (in fact semi-abelian \cite{JMT02}) category of groups, but it is in the pointed category of commutative monoids and in the pointed (quasi-, or almost-, abelian \cite{Rump01}) category
of topological abelian groups, as well as in the non-pointed categories of commutative unital rings and of  T$_1$-spaces.

If normal decompositions in $\CC$ are perfect, the classes $\CN$ and $\CP$ lead to orthogonal factorization systems $(\mathcal{C},\CN)$ and $(\CP,\CF)$, which allow us to pinpoint a ``natural home'' for the {\em comparison morphisms} $\kappa_f$ occurring in the normal decompositions: they are precisely the morphisms in the class $\CK=\mathcal{C}\cap\CF$ (Theorem \ref{perfecttheorem}). The triple $(\CP,\CK,\CN)$ then becomes an {\em orthogonal threefold factorization system} of $\CC$, called a ``double factorization system'' in \cite{PultrTholen02}. We recall the properties of such systems in full generality in Section 5, and show how they bijectively relate to pairs of orthogonal factorization systems $(\mathcal{C},\CF\cap\CW), (\mathcal{C}\cap\CW,\CF)$, for a suitable class $\CW$. The thus obtained triples $(\mathcal{C},\CW,\CF)$ give a {\em Quillen model structure} \cite{Quillen67} for $\CC$ precisely when $\CW$ satisfies the 2-for-3 property which, however, places a severe constraint on the threefold system $(\CP,\CK,\CN)$ (Proposition \ref{2-for-3}).

Beginning from Section 6 we investigate normal decompositions in the (co)slice categories of a `parent category', since (co)slice categories tend to be `highly non-pointed', even when the parent category is the prototypical pointed and, in fact abelian, category $\mathsf{Ab}$ of abelian groups. Indeed, since our general parent category $\CC$ has an initial object, for any object $C$ the slice $\CC/C$ can be pointed only if $C$ is a zero object in $\CC$, in which case $\CC/C\cong\CC$. It is easy to see that for $\CC=\mathsf{Ab}$ and any object $C$, our generalized notions of normal mono- and epimorphism give the expected normal decompositions of morphisms also in the generally non-pointed categories $\mathsf{Ab}/C$ and $C/\mathsf{Ab}$, so that they are carried by their decompositions in $\mathsf{Ab}$ (see Examples \ref{normalclosuresliceexas}(2) and \ref{cosliceSetRing}(2)). 

However, for general $\CC$ we show that the normal closure $\nu_{f/C}$ of a morphism
$$\xymatrix{A\ar[rr]^f\ar[rd]_q && B\ar[ld]^p\\
& C &\\}$$
in $\CC/C$ must be carefully distinguished from the normal closure $\nu_f$ of $f$ taken in $\CC$. 
Generally speaking, the computation of the normal closure in the slices of many concrete categories may become increasingly challenging, simply because more general pushout diagrams are involved than in the ordinary case, {\em i.e.}, when $C$ is terminal in $\CC$. We give descriptions of the normal closure for morphisms of the slices of the categories of T$_1$-spaces  (Theorem \ref{slicedT1closure})
and of groups (Theorem \ref{slicedGrpclosure}) which show strong formal similarities despite the obvious discrepancies between the two parent categories. Concretely, for $f$ in $\mathsf{Grp}/C$ the normal closure is given by the subgroup inclusion
$$\bigcup_{a\in A}f(a)\widehat{E}^B\hookrightarrow B\qquad\text{with }E=\mathrm{Ker}(p)\cap \mathrm{Im}(f),$$
and for  $f$ in $\TOP_1/C$ by the subspace inclusion
$$\quad\bigcup_{a\in A}\overline{E_a}^B\hookrightarrow B\qquad\text{with } E_a=p^{-1}(pa)\cap\mathrm{Im}(f).$$

Finally, in Section 9 we briefly mention that taking normal closures and normal dual closures in a category $\CC$ and its (co)slices fits into the more general context provided by the adjunction between the spans in $\CC$ over a fixed pair of objects and the cospans under this pair, given by taking pushouts and pullbacks of spans and cospans, respectively. The unit of this adjunction specializes to the reflection of a morphism to its normal closure when one of the objects in the pair is terminal in $\CC$, and likewise in the dual situation. The case of a discrete normal closure is then described by so-called {\em Doolittle diagrams}, {\em i.e.} squares that are simultaneously pushout and pullback diagrams.

We leave it for future work to extrapolate from the many example categories discussed in this paper to obtain further insights on the normal closure and its dual in more general types of categories, pointed or not. In the pointed case, there are recent results in \cite{GGHVV24} giving characteristic conditions for a semi-abelian category making its class of normal monomorphisms closed under composition, and in \cite{PeschkeVdL24}, Remark 3.2.2, the relation of this property with the Snake Lemma has been emphasized. We are grateful to Tim Van der Linden for these literature pointers after a presentation of this paper by the third author at a conference in honour of Enrico Vitale in Milan in September 2024. As suggested by Vitale at that occasion, it would certainly be desirable to investigate the theme of this paper also in a 2-categorical context. Indeed, we have left a discussion of the prototypical 2-category, $\mathsf{Cat}$, for future work, even when considered just as an ordinary category.

\section{Reflective and coreflective classes of morphisms}

Let $\CM$ be a class of morphisms in a category $\CC$. We call $\CM$ {\em replete} if $\CM$ contains all identity morphisms of $\CC$ and is closed under composition with isomorphisms from either side; equivalently, if $\CM$ contains the class of isomorphisms of $\CC$ and, considered as a full subcategory of the category $\Arr(\CC)$ of morphisms of $\CC$, is closed under isomorphism, {\em i.e.}, is replete in the standard sense for full subcategories. We use $\langle u,v\rangle:f\to g$ as our notation for a morphism in $\Arr(\CC)$, given by the commutative diagram
$$\xymatrix{A\ar[rr]^u\ar[d]_f && C\ar[d]^g\\
B\ar[rr]_v && D\\}$$
in $\CC$.

\begin{definition}
A replete class $\CM$ of $\CC$ is {\em reflective} if $\CM$ is reflective when considered as a full subcategory of $\Arr(\CC)$ with $\CM$ as its class of objects. Dually, a replete class $\CE$ of $\CC$ is {\em coreflective} if it is a reflective class of $\CC^{\op}$.
\end{definition}

\begin{remark}\label{firstremark}
A reflective class $\CM$ is closed under the formation of limits in $\Arr(\CC)$. As such, it is {\em stable under pullback} in $\CC$ and {\em closed under wide pullbacks} (intersections) in $\CC$. Furthermore, $\CM$ satisfies the {\em weak left cancellation property}
$$n\cdot m\in\CM\quad\text{and}\quad (n\in\CM\text{ or } n\; \text{monic})\quad \Longrightarrow\quad m\in\CM\; .$$
The dual properties hold for coreflective classes. For details see \cite{ImKelly86} and \cite{DT95} (Theorem 1.7, Exercise 1.G).
\end{remark}

Here is how one may detect reflectivity of a class:

\begin{proposition}\label{reflchar}
A  replete class $\CM$ of $\CC$ is reflective if, and only if, every morphism $f$ in $\CC$ factors as $f=n_f\cdot \hat{f}$ with $n_f\in\CM$ such that, whenever $m\cdot u=v\cdot f$ with $m\in\CM$, one has $t\cdot \hat{f}=u$ and $m\cdot t=v\cdot n_f$, for a uniquely determined morphism $t$.
$$\xymatrix{
		A \ar[r]^-{u} \ar[d]^-{\hat{f}}\ar@/_1.5pc/[dd]_f & C \ar[dd]^m \\
		N_{f} \ar@{-->}[ur]_-{t} \ar[d]^-{n_{f}} & \\
		B \ar[r]_-{v} & D	
	}$$
\end{proposition}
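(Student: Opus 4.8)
The plan is to read the stated factorization property as a concrete description of a reflection of the object $f$ of $\Arr(\CC)$ into the full subcategory $\CM$, the candidate reflection being $n_f\in\CM$ with reflection unit the square $\langle\hat f,\mathrm{id}_B\rangle:f\to n_f$ (legitimate since $n_f\cdot\hat f=f=\mathrm{id}_B\cdot f$). Under this reading any morphism $\langle u,v\rangle:f\to m$ into some $m\in\CM$ must factor through the unit as $\langle t,v'\rangle\cdot\langle\hat f,\mathrm{id}_B\rangle=\langle t\hat f,v'\rangle$, which forces $v'=v$ and $t\hat f=u$, while the requirement that $\langle t,v\rangle$ be a morphism of $\Arr(\CC)$ is exactly $m\cdot t=v\cdot n_f$. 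Thus the stated property says precisely that this factoring datum $\langle t,v\rangle$ exists and is unique, i.e.\ that $n_f$ is a reflection; both implications are obtained by making this translation precise.

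For the ``if'' direction I would take the hypothesised factorization $f=n_f\cdot\hat f$ and declare $n_f$, with unit $\langle\hat f,\mathrm{id}_B\rangle$, to be the reflection. Given $m\in\CM$ and $\langle u,v\rangle:f\to m$, the hypothesis produces the unique $t$ with $t\hat f=u$ and $m\cdot t=v\cdot n_f$; the second equation says $\langle t,v\rangle:n_f\to m$ is a morphism of $\Arr(\CC)$, and $\langle t,v\rangle\cdot\langle\hat f,\mathrm{id}_B\rangle=\langle u,v\rangle$. Uniqueness of the factoring morphism follows because any $\langle t',v'\rangle$ with $\langle t',v'\rangle\cdot\langle\hat f,\mathrm{id}_B\rangle=\langle u,v\rangle$ satisfies $v'=v$ and $t'\hat f=u$, hence $m t'=v' n_f=v n_f$, so $t'$ meets the two defining conditions of $t$ and equals it. This is routine.

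The substance is in the ``only if'' direction, where $\CM$ is assumed reflective and I must manufacture a factorization whose unit is the identity on the codomain. Starting from an arbitrary reflection $\eta_f=\langle s,r\rangle:f\to m_0$ (with $m_0:N_0\to B_0$, $s:A\to N_0$, $r:B\to B_0$ and $m_0 s=rf$), the key claim is that the codomain component $r$ is an isomorphism. Here I exploit that $\CM$ is replete, so $\mathrm{id}_B$ and $\mathrm{id}_{B_0}$ both lie in $\CM$. Reflecting $\langle f,\mathrm{id}_B\rangle:f\to\mathrm{id}_B$ yields a unique $\langle p,q\rangle:m_0\to\mathrm{id}_B$ with $ps=f$ and $qr=\mathrm{id}_B$, so $r$ is a split monomorphism. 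For the other composite I compare two morphisms $m_0\to\mathrm{id}_{B_0}$: the obvious $\langle m_0,\mathrm{id}_{B_0}\rangle$, and $\langle rp,rq\rangle=\langle r,r\rangle\cdot\langle p,q\rangle$. Both restrict along $\eta_f$ to $\langle rf,r\rangle:f\to\mathrm{id}_{B_0}$ (using $m_0 s=rf$, $ps=f$, $qr=\mathrm{id}_B$), so the uniqueness clause of the reflection forces $rq=\mathrm{id}_{B_0}$; with $qr=\mathrm{id}_B$ this makes $r$ invertible. I expect verifying these two applications of uniqueness, especially the identity $\langle rp,rq\rangle\cdot\eta_f=\langle rf,r\rangle$, to be the main obstacle, since it is the only place the argument genuinely uses reflectivity rather than mere factorization.

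Once $r$ is an isomorphism I would transport the reflection onto the codomain $B$: set $\hat f:=s$ and $n_f:=q\cdot m_0$, which lies in $\CM$ because $\CM$ is replete and $q=r^{-1}$ is an isomorphism, and note $n_f\cdot\hat f=q m_0 s=q r f=f$. The square $\langle\mathrm{id}_{N_0},r\rangle:n_f\to m_0$ is then an isomorphism of $\Arr(\CC)$ with $\langle\mathrm{id}_{N_0},r\rangle\cdot\langle\hat f,\mathrm{id}_B\rangle=\eta_f$, so $n_f$ with unit $\langle\hat f,\mathrm{id}_B\rangle$ is again a reflection of $f$, now with identity codomain component. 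Unwinding its universal property through the translation of the first paragraph delivers exactly the factorization property claimed, completing the proof.
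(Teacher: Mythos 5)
Your proof is correct and takes essentially the same route as the paper's: the ``if'' direction exhibits $\langle\hat f,1_B\rangle:f\to n_f$ as the reflection unit, and the ``only if'' direction shows the codomain component $r$ of an arbitrary reflection unit is invertible---first split monic by reflecting $\langle f,1_B\rangle:f\to\mathrm{id}_B$, then invertible by comparing $\langle rp,rq\rangle$ with $\langle m_0,\mathrm{id}_{B_0}\rangle$ via the uniqueness clause, exactly the paper's comparison of $\langle i\cdot k,i\cdot j\rangle$ with $\langle n,1_E\rangle$ in its notation.
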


\begin{proof}
The given condition is sufficient for the reflectivity of $\CM$. Indeed, $\langle \hat{f}, 1_B\rangle: f\to n_f$ serves as the reflection of $f$ into $\CM$, since the above diagram, redrawn as 
$$\xymatrix@R=1.3cm{
		A \ar[d]_-{f} \ar[r]^-{\hat{f}} \ar[drr]|(.3){~u~} & N_{f} \ar|\hole[d]_(.65){n_{f}} \ar@{-->}[dr]^-{t} & \\
		B \ar[r]^{1_{B}} \ar[drr]_-{v} & B \ar[dr]^-{v} & C \ar[d]^-{m} \\
		& & D
	}$$
actually shows its universal property. Conversely, having a reflection $\langle \hat{f}, i\rangle: f\to n $ of $f$ into $\CM$, it suffices to show that necessarily $i$ is an isomorphism, since then  $n_f:=i^{-1}\cdot n\in \CM$ (by repleteness) satisfies the condition of the Proposition. To this end, writing $i:B\to E$, we first observe that the morphism $\langle f,1_B\rangle: f\to 1_B$ corresponds to $\langle k,j\rangle:n\to 1_B$ by adjunction, where $k=j\cdot n$ and $j\cdot i=1_B$. Then $i\cdot j=1_E$ follows from the fact that the morphisms $\langle i\cdot k, i\cdot j\rangle, \langle n, 1_E\rangle: n\to 1_E$ are both equalized by the reflection $\langle \hat{f}, i\rangle$.
\end{proof}

\begin{corollary}
For a reflective class $\CM$ in $\CC$, in the notation of the Proposition, one has
$$\CM=\{f\in\mathsf{mor}(\CC)\mid \hat{f}  \text{ {\em is an isomorphism}} \}.$$
\end{corollary}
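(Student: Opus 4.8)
The plan is to prove the two inclusions of the displayed equality separately, showing that $\CM$ coincides with the class of morphisms whose first factor $\hat f$ in the reflection factorization $f=n_f\cdot\hat f$ of Proposition~\ref{reflchar} is invertible.

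First I would dispose of the easy inclusion ``$\supseteq$''. If $\hat f$ is an isomorphism, then $f=n_f\cdot\hat f$ exhibits $f$ as the composite of $n_f\in\CM$ with an isomorphism on the right; since $\CM$ is replete, hence closed under composition with isomorphisms from either side, this immediately gives $f\in\CM$. There is no subtlety here.

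For the reverse inclusion ``$\subseteq$'', suppose $f\in\CM$. The idea is that $f$, viewed as an object of the reflective full subcategory $\CM\subseteq\Arr(\CC)$, is fixed by the reflection up to isomorphism, the relevant unit being precisely $\langle\hat f,1_B\rangle\colon f\to n_f$ from the proof of Proposition~\ref{reflchar}. Concretely, I would read off an inverse for $\hat f$ from the universal property itself: applying the condition of Proposition~\ref{reflchar} to the commutative square given by $m=f\in\CM$, $u=1_A$, $v=1_B$ (which commutes since $f\cdot 1_A=1_B\cdot f$) yields a unique $t\colon N_f\to A$ with $t\cdot\hat f=1_A$ and $f\cdot t=n_f$. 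It then remains only to verify $\hat f\cdot t=1_{N_f}$.

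The main obstacle is this last equality, and I would settle it by a second appeal to the uniqueness clause of Proposition~\ref{reflchar}, exactly in the spirit of the argument showing $i$ invertible at the end of that proof. Applying the universal property to the square with $m=n_f\in\CM$, $u=\hat f$, $v=1_B$ (which commutes since $n_f\cdot\hat f=f=1_B\cdot f$), both $1_{N_f}$ and $\hat f\cdot t$ are seen to satisfy the defining equations of the induced morphism: indeed $(\hat f\cdot t)\cdot\hat f=\hat f\cdot(t\cdot\hat f)=\hat f$ and $n_f\cdot(\hat f\cdot t)=f\cdot t=n_f$, while trivially $1_{N_f}\cdot\hat f=\hat f$ and $n_f\cdot 1_{N_f}=n_f$. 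Uniqueness forces $\hat f\cdot t=1_{N_f}$, so $\hat f$ is an isomorphism with inverse $t$, completing the inclusion and hence the proof. Alternatively, one could invoke the standard fact that the reflection unit at an object already lying in a replete reflective subcategory is an isomorphism, applied to $\langle\hat f,1_B\rangle$, whose second component $1_B$ is trivially invertible, so that the first component $\hat f$ must be invertible as well.
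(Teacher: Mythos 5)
Your proof is correct and is exactly the argument the paper intends: the ``$\supseteq$'' inclusion by repleteness, and the ``$\subseteq$'' inclusion by extracting a left inverse $t$ of $\hat f$ from the universal property of Proposition~\ref{reflchar} and then using its uniqueness clause (just as in the paper's argument that $i$ is invertible at the end of that proof) to get $\hat f\cdot t=1_{N_f}$. The paper leaves this as an immediate consequence, and your write-up, including the alternative via the standard fact that the unit at an object of a full reflective subcategory is an isomorphism, fills in precisely the intended details.
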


\begin{remarks}\label{secondremark}
(1) Proposition \ref{reflchar} appeared in its dual form as  Proposition 1.3 in \cite{MacDTholen82} and as 
Corollary 3.3 in \cite{Tholen83}, with the needed factorizations referred to as {\em locally orthogonal}. In the terminology of \cite{DT95}, Section 1.5 and Exercise 1.G, it shows that a class $\CM$ is reflective in $\CC$ if, and only if, $\CC$ has {\em right $\CM$-factorizations}.

(2) Recall (\cite{FreydKelly72, Pumplun72}) that for any classes $\CM$ and $\CE$ of morphisms in $\CC$ one may form the {\em left orthogonal complement} $^\bot\CM=\{f\in\mathrm{mor}(\CC)\mid \forall m\in \CM:f\bot m\}$ of $\CM$ and the {\em right orthogonal complement} $\CE^{\bot}=\{g\in\mathrm{\CC}\mid\forall e\in\CE: e\bot g\}$ of $\CE$; here $f\bot g$ stands for the unique diagonalization (or lifting) property: whenever $g\cdot u=v\cdot f$, then $t\cdot f=u$ and $g\cdot t=v$ for a unique morphism $t$. In light of the Corollary, it is noteworthy that the left orthogonal complement of a reflective class $\CM$ is analogously described as
$$^\bot\CM=\{f\in\mathrm{mor}(\CC)\mid n_f\;\text{is an isomorphism}\}. $$
\end{remarks}

Recall that, by definition, for replete\footnote{It suffices to have $\mathsf{Iso}(\CC)\cdot\CE\subseteq\CE$  and $\CM\cdot \mathsf{Iso}(\CC)\subseteq \CM$.} classes $\CE$ and $\CM$, the pair $(\CE,\CM)$ is an {\em orthogonal factorization system (o.f.s.)} of $\CC$ if $\CM\cdot\CE= \mathsf{mor}(\CC)$ and $\CE\bot\CM$. The classes $\CE$ and $\CM$ determine each other since $\CE=\,^{\bot}\CM$ and $\CM=\;\CE^{\bot}$.
Well-known necessary consequences are that $\CM$ must be closed under composition and under the formation of limits in $\Arr(\CC)$. In fact, $\CM$ must be a reflective class of $\CC$ and, dually, $\CE$ must be coreflective, so that these classes respectively enjoy the properties mentioned in Remark \ref{firstremark} and their duals. It is well known that the converse statement needs closure under composition (see Proposition 2.1 of \cite{Tholen83}); for the reader's convenience, we indicate the proof, as follows.

\begin{proposition}\label{ofschar}
A replete class $\CM$ of a category $\CC$ belongs to an o.f.s. $(\CE,\CM)$ if, and only if, $\CM$  is reflective and closed under composition.
\end{proposition}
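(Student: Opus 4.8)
The plan is to prove the two implications separately, taking as the candidate for the left-hand class the left orthogonal complement $\CE := {}^{\bot}\CM$, which is automatically replete. The forward implication is the easy, essentially known half, while the converse — where closure under composition is indispensable — carries all the weight. Throughout I would lean on Proposition \ref{reflchar} to translate ``reflective'' into the existence of locally orthogonal factorizations, and on Remark \ref{secondremark}(2) to recognize membership in ${}^{\bot}\CM$.

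For necessity, suppose $\CM$ belongs to an o.f.s. $(\CE,\CM)$. Closure of $\CM=\CE^{\bot}$ under composition is the standard diagonal chase: given $e\in\CE$ and a commuting square over $m_2\cdot m_1$ with $m_1,m_2\in\CM$, one lifts first against $m_2$ and then against $m_1$, and checks that the resulting diagonal is the unique one. For reflectivity I would invoke Proposition \ref{reflchar}: writing each $f$ as $f=m\cdot e$ with $m\in\CM$ and $e\in\CE$, and setting $n_f:=m$, $\hat f:=e$, the required locally orthogonal property is exactly the lifting property $e\bot m'$ for $m'\in\CM$, since any square $m'\cdot u=v\cdot f=(v\cdot m)\cdot e$ is filled uniquely by orthogonality.

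For sufficiency — the heart of the matter — suppose $\CM$ is reflective and closed under composition. By Proposition \ref{reflchar} every $f$ admits a factorization $f=n_f\cdot\hat f$ with $n_f\in\CM$ enjoying the universal (locally orthogonal) property. It remains only to show $\hat f\in{}^{\bot}\CM$, for then $({}^{\bot}\CM,\CM)$ provides both the factorization $\CM\cdot({}^{\bot}\CM)=\mathsf{mor}(\CC)$ and the orthogonality $({}^{\bot}\CM)\bot\CM$, the latter by definition of ${}^{\bot}\CM$. By Remark \ref{secondremark}(2), $\hat f\in{}^{\bot}\CM$ precisely when the $\CM$-reflection $n_{\hat f}$ of $\hat f$ is an isomorphism. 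This is where composition-closure enters: factoring $\hat f=n_{\hat f}\cdot\widehat{\hat f}$ and composing yields a \emph{second} $\CM$-factorization $f=(n_f\cdot n_{\hat f})\cdot\widehat{\hat f}$, whose left factor lies in $\CM$ \emph{only because} $\CM$ is closed under composition. I would then play the universal property of $f=n_f\cdot\hat f$ against this second factorization: the square with $m=n_f\cdot n_{\hat f}$, $u=\widehat{\hat f}$, $v=1_B$ produces a unique $t$ with $t\cdot\hat f=\widehat{\hat f}$ and $(n_f\cdot n_{\hat f})\cdot t=n_f$. Two further uniqueness arguments then show $n_{\hat f}\cdot t=1$ and $t\cdot n_{\hat f}=1$, so $n_{\hat f}$ is invertible, as required.

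The main obstacle is exactly this last step: exhibiting the inverse of $n_{\hat f}$ and confirming that it is two-sided. The delicate point is that each of the two identities must be pinned down by the \emph{uniqueness} clause of a locally orthogonal factorization — once that of $f=n_f\cdot\hat f$ (comparing $1_{N_f}$ with $n_{\hat f}\cdot t$) and once that of $\hat f=n_{\hat f}\cdot\widehat{\hat f}$ (comparing $1$ with $t\cdot n_{\hat f}$) — rather than by any direct computation, so the care lies in tracking which of the two universal properties justifies each equation. It is worth flagging that composition-closure is used precisely to make $n_f\cdot n_{\hat f}$ an admissible $\CM$-part, so the argument visibly collapses for merely reflective $\CM$; this is consistent with the fact that reflectivity by itself yields only the weak left cancellation property recorded in Remark \ref{firstremark}, not closure under composition.
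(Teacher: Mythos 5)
Your proof is correct and takes essentially the same route as the paper's: in the crucial ``if'' direction you factor $\hat f$ through its own $\CM$-reflection, use closure under composition to make $n_f\cdot n_{\hat f}\in\CM$, obtain $t$ from the universal property of $f=n_f\cdot\hat f$, and then pin down $n_{\hat f}\cdot t=1$ and $t\cdot n_{\hat f}=1$ by the two uniqueness clauses --- precisely the paper's argument (with its $g=\hat f$, $n_g=n_{\hat f}$). The ``only if'' direction, which the paper dismisses as well known, you sketch correctly as well.
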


\begin{proof}
Since the ``only if'' part is well known, we confirm here only the ``if'' part. Hence, having a reflection $\langle \hat{f}, 1_B\rangle: f\to n_f$ of $f:A\to B$ into $\CM$, it suffices to show that the morphism $g:=\hat{f}$ belongs to the class $\CE:= {}^{\bot}\!\CM$. But this will follow from Proposition \ref{reflchar} once we have shown that the morphism $\hat{g}$ of the reflection of $g$ into $\CM$ is an isomorphism. To this end specialize the diagram of Proposition \ref{reflchar} to 
$$\xymatrix{
		A \ar[r]^-{\hat{g}} \ar[d]_-{\hat{f}}\ar@/_2.0pc/[dd]_f & N_g \ar[dd]^{n_f\cdot n_g} \\
		N_{f} \ar@{-->}[ur]_-{t} \ar[d]_-{n_{f}} & \\
		B \ar[r]_-{1_B} & B	
	}$$
where $n_f\cdot n_g\in\CM$ by hypothesis on $\CM$. The morphism $t$ with $t\cdot \hat{f}=\hat{g}$ and $n_f\cdot n_g\cdot t=n_f$ actually satisfies $n_g\cdot t=1_{N_f}$, by the uniqueness property of the factorization $f=n_f\cdot \hat{f}$. Likewise, the uniqueness property of the factorization $g=n_g\cdot\hat{g}$ shows $t\cdot n_g=1_{N_g}$, which concludes the proof.
\end{proof}

In its dual form, Proposition \ref{ofschar} implies the following ``folklore'' fact, stated under minimal hypotheses on the category:
\begin{corollary}\label{regular}
In a category $\CC$ with kernel pairs and their coequalizers, the class of regular epimorphisms is coreflective. Moreover, every morphism in $\CC$  factors (regular epi, mono) if, and only if, regular epimorphisms are closed under composition in $\CC$.
\end{corollary}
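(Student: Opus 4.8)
The plan is to read this corollary as the dual of Proposition \ref{ofschar}, specialized to the class $\CE=\mathsf{RegEpi}(\CC)$, once its partner class has been identified as $\mathsf{Mono}(\CC)$. I would therefore isolate three independent ingredients: (i) the coreflectivity of regular epimorphisms, established directly; (ii) the computation of the right orthogonal complement, $\mathsf{RegEpi}(\CC)^\bot=\mathsf{Mono}(\CC)$; and (iii) the assembly of (i) and (ii) into the stated equivalence through the dual of Proposition \ref{ofschar}. Repleteness of both classes is immediate (isomorphisms are regular epimorphisms, and both classes absorb isomorphisms on either side), so I would not dwell on it.

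For (i), I would apply the dual of Proposition \ref{reflchar}. Given $f\colon A\to B$, I form the kernel pair $(k_1,k_2)$ of $f$ and let $e_f\colon A\to Q$ be its coequalizer, a regular epimorphism; since $f$ coequalizes $(k_1,k_2)$, it factors as $f=\check f\cdot e_f$, and I take $\langle 1_A,\check f\rangle\colon e_f\to f$ as the candidate coreflection. The single structural fact I rely on is that every regular epimorphism is the coequalizer of its own kernel pair, which exists by hypothesis. Verifying the universal property then reduces to a standard kernel-pair chase: for any regular epimorphism $e$ and any $\langle a,b\rangle\colon e\to f$, the composite $e_f\cdot a$ coequalizes the kernel pair of $e$ (because $f\cdot a=b\cdot e$ does, and $e_f$ respects the kernel-pair relation of $f$), whence $e_f\cdot a$ factors uniquely through $e$ and yields the required comparison.

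For (iii), in the \emph{if} direction I combine (i) and (ii): once regular epimorphisms are coreflective and closed under composition, the dual of Proposition \ref{ofschar} produces the orthogonal factorization system $(\mathsf{RegEpi}(\CC),\mathsf{RegEpi}(\CC)^\bot)$, and (ii) rewrites the right-hand class as $\mathsf{Mono}(\CC)$; in particular every morphism admits a (regular epi, mono) factorization. Conversely, given such factorizations, the orthogonality $\mathsf{RegEpi}(\CC)\bot\mathsf{Mono}(\CC)$ supplied by (ii) upgrades them at once to an orthogonal factorization system, and the left class of any such system is closed under composition.

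The main obstacle is ingredient (ii), the identity $\mathsf{RegEpi}(\CC)^\bot=\mathsf{Mono}(\CC)$, which carries the real content and feeds both implications of (iii). The inclusion $\supseteq$ is a routine diagonalization: in a square with a regular epimorphism $e$ on the left and a monomorphism $m$ on the right, left-cancelling $m$ shows the top arrow coequalizes the kernel pair of $e$, and the unique factorization through $e$ supplies the diagonal. The delicate inclusion is $\subseteq$: starting from $m\colon C\to D$ orthogonal to all regular epimorphisms, I take the coequalizer $e$ of its kernel pair, with induced $\bar m$ satisfying $\bar m\cdot e=m$, and lift the identity in the commuting square $m\cdot 1_C=\bar m\cdot e$. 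The resulting diagonal $t$ gives $t\cdot e=1_C$, exhibiting $e$ as a split monomorphism; being simultaneously a regular epimorphism it must be an isomorphism, which collapses the kernel pair of $m$ and forces $m$ to be monic. Pinning down this last step cleanly, together with the bookkeeping of the dualization, is where I expect the argument to need the most care.
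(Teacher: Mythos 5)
Your proposal is correct and follows essentially the same route the paper intends: the paper presents this corollary as the dual form of Proposition \ref{ofschar}, with coreflectivity obtained from the coequalizer-of-kernel-pair factorization via (the dual of) Proposition \ref{reflchar}, which is exactly your ingredient (i), and your ingredients (ii)--(iii) merely spell out the identification $\mathsf{RegEpi}(\CC)^{\bot}=\mathsf{Mono}(\CC)$ and the assembly that the paper leaves implicit. All your verifications (the kernel-pair chase for the coreflection, the split-mono-plus-regular-epi argument for the inclusion $\subseteq$, and the upgrade of (regular epi, mono)-factorizations to an o.f.s.) are sound.
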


\section{Normal monomorphisms and epimorphisms, the normal closure}
For simplicity, we now {\em assume that the category $\CC$ be finitely complete and finitely cocomplete, denoting the terminal object by $1$ and the initial object by $0$}. For a morphism $f:A\to B$, consider the following diagrams, where $N_f=B\times_{(B+_A1)}1$ with pullback projection $\nu_f$, and $P_f= 0+_{(0\times_BA)}A$ with pushout injection $\pi_f$. Note that, as a pullback of a split monomorphism, $\nu_f$ is a regular monomorphism; dually, $\pi_f$ is a regular epimorphism.

$$\xymatrix{A\ar[dd]_{f}\ar[rr]\ar@{-->}[rd]^{\hat{f}} && 1\ar[dd]\\
	& N_f \ar[ld]^{\nu_f}\ar[ru] & \\
	B \ar[rr] && B+_A  1\\} \qquad
	\xymatrix{0\times_{B} A \ar[dd]_{}\ar[rr]^{} && A \ar[dl]_-{\pi_{f}} \ar[dd]^-{f}\\
	& P_f \ar@{-->}[dr]_-{\check{f}} & \\
	0 \ar[rr] \ar[ur] && B\\} $$
	
	Equivalently, having formed the pushout $B+_A1$ one may construct the morphism $\nu_f$ as the equalizer of the pushout injection $B\to B+_A1$ and the composite morphism $B\to 1\to B+_A1$. Likewise, one may present the morphism $\pi_f$ as a coequalizer.

\begin{definition}
We call the morphism
	$f$ a {\em normal monomorphism} if the induced morphism $\hat{f}$ with $\nu_f\cdot \hat{f}=f$ is an isomorphism; equivalently, if the left outer pushout square is also a pullback diagram. Dually, $f$ is a {\em normal epimorphism} if the induced morphism $\check{f}$ with $\check{f}\cdot\pi_f=f$ is an isomorphism or, equivalently, if the right outer pullback square is also pushout diagram. This respectively defines the classes $\mathsf{NMono}(\CC)$ and $\mathsf{NEpi}(\CC)$ of normal monomorphisms and epimorphisms of $\CC$.
	\end{definition}
	
	As noted above, normality implies regularity, {\em i.e.}
	$$\mathsf{NMono}(\CC)\subseteq\mathsf{RMono}(\CC)\quad\text{and}\quad \mathsf{NEpi}(\CC)\subseteq \mathsf{REpi}(\CC)\,. 	$$
Furthermore, the assignment $f\mapsto N_f$ clearly defines a functor $\Arr(\CC)\to \CC$ which sends $\langle u,v\rangle:f\to g$ to the unique morphism $t$ rendering the following diagram on the left commutative. The diagram on the right, with $s= v+_u1$, gives a more elaborate picture of its construction as $t=v\times_s1$.
$$	\xymatrix{
		A \ar[d]_-{\hat{f}} \ar[r]^-{u} \ar@/_1.7pc/[dd]_-{f} & C \ar[d]^-{\hat{g}} \ar@/^1.7pc/[dd]^-{g} \\
		N_{f} \ar[d]_-{\nu_{f}} \ar@{-->}[r]^-{t} & N_{g} \ar[d]^-{\nu_{g}} \\
		B \ar[r]_-{v} & D }
\qquad \xymatrix{
	A \ar[rr]^-{} \ar[dr]|-{\hat{f}} \ar[dd]_{f} \ar[drrr]^(0.75){u} &  & 1 \ar|(.33)\hole[dd]^-{} \ar[drrr]^-{} &  &  &  \\
	& N_{f} \ar|(.52)\hole[ur]^-{} \ar[dl]^-{\nu_{f}} \ar@{-->}|(.71)\hole[drrr]^-{t} &  & C \ar[rr]^(.4){} \ar[dr]^-{\hat{g}} \ar[dd]^-{g} &  & 1 \ar[dd]^-{} \\
	B \ar[rr]^-{} \ar[drrr]_{v} &  & B+_{A}1 \ar@{-->}[drrr]^(.25){s} &  & N_{g} \ar[ur]^-{} \ar[dl]|(.35){\nu_{g}} &  \\
	&  &  & D \ar[rr]^-{} &  & D+_{C}1		
}$$
We are now ready to prove:

\begin{theorem}\label{firsttheorem}
The class $\mathsf{NMono}(\CC)$ is reflective in $\CC$, and the class $\mathsf{NEpi}(\CC)$ is coreflective.
\end{theorem}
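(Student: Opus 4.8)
The plan is to apply Proposition \ref{reflchar} to the class $\CM=\mathsf{NMono}(\CC)$, taking as the required factorization the canonical one $f=\nu_f\cdot\hat f$ supplied by the construction of $N_f$; the coreflectivity of $\mathsf{NEpi}(\CC)$ will then follow by reading every step in $\CC^{\op}$ (pushouts for pullbacks, $0$ for $1$, $\pi_f$ for $\nu_f$). Since Proposition \ref{reflchar} presupposes repleteness, I would first record that $\mathsf{NMono}(\CC)$ is replete: because $f\mapsto N_f$ is a functor on $\Arr(\CC)$, an isomorphism $\langle u,v\rangle\colon f\to g$ induces an isomorphism $t\colon N_f\to N_g$ with $t\cdot\hat f=\hat g\cdot u$, so $\hat f$ is invertible iff $\hat g$ is; normality is thus invariant under isomorphism in $\Arr(\CC)$. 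In particular every isomorphism of $\CC$ is a normal monomorphism, since for invertible $f$ the coprojection $1\to B+_A 1$ is invertible, whence so are $\nu_f$ and $\hat f$.

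Two substantive points remain: that $\nu_f$ lies in $\mathsf{NMono}(\CC)$, and the local orthogonality of $f=\nu_f\cdot\hat f$. For the latter, suppose $m\cdot u=v\cdot f$ with $m\colon C\to D$ normal; replacing $m$ by the isomorphic $\nu_m$, I may take $C=D\times_{(D+_C 1)}1$. Writing $!_X\colon X\to 1$ for the unique map to the terminal object, $p,q$ for the coprojections into $B+_A 1$, and $\iota_D,\iota_1$ for those into $D+_C 1$, the comparison $s=v+_u 1$ of the functoriality diagram satisfies $s\cdot p=\iota_D\cdot v$ and $s\cdot q=\iota_1$. From the pullback relation $p\cdot\nu_f=q\cdot{!}_{N_f}$ one gets $\iota_D\cdot(v\cdot\nu_f)=s\cdot p\cdot\nu_f=s\cdot q\cdot{!}_{N_f}=\iota_1\cdot{!}_{N_f}$, so the cone $(v\cdot\nu_f,\,{!}_{N_f})$ induces $t\colon N_f\to N_m=C$ into the pullback with $m\cdot t=v\cdot\nu_f$. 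Then $m\cdot t\cdot\hat f=v\cdot f=m\cdot u$ and $m$ monic give $t\cdot\hat f=u$, and uniqueness of $t$ is immediate because $m$ is a monomorphism.

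The step I expect to be the main obstacle is the remaining one: that $\nu_f$ is itself a normal monomorphism, i.e.\ the idempotency of the construction. I would establish this by showing that the canonical comparison between $Q=B+_A 1$ and $Q'=B+_{N_f}1$ is an isomorphism compatible with the coprojections. With $p,q$ the injections into $Q$ and $p',q'$ those into $Q'$, the universal property of $Q$ yields $\phi\colon Q\to Q'$ with $\phi p=p'$, $\phi q=q'$ (the needed identity $p'\cdot f=q'\cdot{!}_A$ holds since $p'\cdot\nu_f=q'\cdot{!}_{N_f}$ and $f=\nu_f\cdot\hat f$), while the universal property of $Q'$ yields $\psi\colon Q'\to Q$ with $\psi p'=p$, $\psi q'=q$ (the needed identity $p\cdot\nu_f=q\cdot{!}_{N_f}$ is exactly the commutativity of the pullback square defining $N_f$). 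As the coprojections are jointly epimorphic, $\psi\phi=1_Q$ and $\phi\psi=1_{Q'}$. Hence the cospans defining $N_f=B\times_Q 1$ and $N_{\nu_f}=B\times_{Q'}1$ are isomorphic over $B$, so the comparison $\widehat{\nu_f}\colon N_f\to N_{\nu_f}$ with $\nu_{\nu_f}\cdot\widehat{\nu_f}=\nu_f$ is an isomorphism; that is, $\nu_f\in\mathsf{NMono}(\CC)$.

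With repleteness, $\nu_f\in\mathsf{NMono}(\CC)$, and local orthogonality all verified, Proposition \ref{reflchar} delivers the reflectivity of $\mathsf{NMono}(\CC)$, and the dual argument in $\CC^{\op}$ delivers the coreflectivity of $\mathsf{NEpi}(\CC)$.
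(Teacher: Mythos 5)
Your proof is correct, and its overall skeleton coincides with the paper's: both verify the hypotheses of Proposition \ref{reflchar} for the canonical factorization $f=\nu_f\cdot\hat f$ and then obtain the coreflectivity of $\mathsf{NEpi}(\CC)$ by duality. The genuine difference is in the idempotency step, i.e.\ the proof that $\nu_f\in\mathsf{NMono}(\CC)$. The paper argues at the pullback level: taking $u=\hat f$, $v=1_B$ in the functoriality diagram, it invokes ``the pullback property of $N_f$'' to produce $r:N_{\nu_f}\to N_f$ with $\nu_f\cdot r=\nu_{\nu_f}$, and then checks that $r$ is inverse to $\widehat{\nu_f}$. You argue at the pushout level: the universal properties of $B+_A1$ and of $B+_{N_f}1$ yield mutually inverse comparisons $\phi,\psi$ commuting with the coprojections, so the two cospans have isomorphic pullbacks and $\widehat{\nu_f}$ is an isomorphism. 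The two arguments are equivalent, but yours is more self-contained precisely where the paper is tersest: to invoke the pullback property of $N_f$ one needs the cone condition $p\cdot\nu_{\nu_f}=q\cdot{!}_{N_{\nu_f}}$ (with $p,q$ the coprojections of $B+_A1$), and that is exactly what composing the pullback square of $N_{\nu_f}$ with your map $\psi:B+_{N_f}1\to B+_A1$ supplies; the paper leaves this verification implicit. Your handling of local orthogonality---normalizing $m$ to $\nu_m$ and building the fill-in directly from the pullback property of $N_m$, with uniqueness for free since $m$ is monic---is the paper's argument (the functorial $t$ composed with $\hat g^{-1}$) in different packaging, and your explicit repleteness check, omitted by the paper as routine, is appropriate since Proposition \ref{reflchar} presupposes it.
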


\begin{proof}
For every morphism $f$, we must prove that $g:=\nu_f$ is a normal monomorphism, {\em i.e.} that $\hat{g}$ is an isomorphism. For that, in the diagram above, consider $u:=\hat{f}$ and $v:=1_B$. By the pullback property of $N_f$, there is a morphism $r:N_g\to N_f$ with $\nu_f\cdot r=\nu_g$.  Envoking also the pullback property of $N_g$ one sees that $r$ is inverse to $\hat{g}$.

In order to show that the factorization $f=\nu_f\cdot\hat{f}$ enjoys the universal property as described in Proposition \ref{reflchar}, consider again the commutative diagrams above, but now with any $g\in\mathsf{NMono}(\CC)$ and $u,v$ arbitrary. Since $\hat{g}$ is an isomorphism, clearly $t':=\hat{g}^{-1}\cdot t$ will serve as the fill-in morphism as required by the Proposition.
\end{proof}

With Remark \ref{firstremark} one concludes:

\begin{corollary}
The class $\mathsf{NMono}(\CC)$ is stable under pullback and closed under wide pullbacks in $\CC$, and it has the weak right cancellation property. The class $\mathsf{NEpi}(\CC)$ has the dual properties.
\end{corollary}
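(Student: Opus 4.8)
The plan is to obtain the statement as a purely formal consequence of two results already in hand, namely Theorem~\ref{firsttheorem} and the general facts recorded in Remark~\ref{firstremark}; no fresh manipulation of the defining pullback and pushout squares is required.

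First I would appeal to Theorem~\ref{firsttheorem}, which asserts that $\mathsf{NMono}(\CC)$ is a reflective class of $\CC$. Remark~\ref{firstremark} then applies directly with $\CM=\mathsf{NMono}(\CC)$: being reflective, $\CM$ is closed under the formation of limits in $\Arr(\CC)$, whence it is stable under pullback and closed under wide pullbacks in $\CC$; and it satisfies the weak cancellation property, i.e. $n\cdot m\in\CM$ together with $n\in\CM$ (or $n$ monic) forces $m\in\CM$. Reading the three assertions about $\mathsf{NMono}(\CC)$ off this list gives the first sentence of the corollary verbatim.

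For the second sentence I would argue dually. Theorem~\ref{firsttheorem} says that $\mathsf{NEpi}(\CC)$ is coreflective, which by the Definition means it is a reflective class of $\CC^{\op}$. Running the previous paragraph inside $\CC^{\op}$ — where pullbacks turn into pushouts, wide pullbacks into wide pushouts, and the cancellation direction reverses — then yields exactly the dual properties for $\mathsf{NEpi}(\CC)$, as Remark~\ref{firstremark} already records that the dual properties hold for coreflective classes.

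Since the result is merely a specialisation of Remark~\ref{firstremark} along Theorem~\ref{firsttheorem}, I expect no genuine mathematical obstacle. The only point requiring care is bookkeeping: one must match the handedness of the weak cancellation property to the mono case for $\mathsf{NMono}(\CC)$ and to its dual for $\mathsf{NEpi}(\CC)$, and confirm that the phrase ``closed under limits in $\Arr(\CC)$'' specialises to precisely the pullback and wide-pullback statements claimed, exactly as spelled out in Remark~\ref{firstremark}.
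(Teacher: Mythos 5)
Your proposal is correct and is exactly the paper's argument: the corollary is stated as an immediate consequence of Theorem~\ref{firsttheorem} (reflectivity of $\mathsf{NMono}(\CC)$, coreflectivity of $\mathsf{NEpi}(\CC)$) combined with Remark~\ref{firstremark} and its dual, with no further work. Your closing caution about matching the handedness of the cancellation property is well placed, since the paper's own wording shifts between ``left'' and ``right'' for this property, but the substance you cite from Remark~\ref{firstremark} is the right one.
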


Briefly: Normal mono- and epimorphisms enjoy the same properties known for the larger classes of regular mono- and epimorphisms---and they do so for the same reasons: reflectivity and coreflectivity (see Corollary \ref{regular}).

For the computation of $N_f,\,\nu_f$ and $P_f,\,\pi_f$ in concrete categories, the following trivial observation is useful:

\begin{lemma}\label{triviallemma}
If $f=m\cdot e$ with an epimorphism $e$, then there is a unique isomorphism $i:N_f\to N_m$ with $\nu_f=\nu_m\cdot i$, so that  $\nu_f\cong\nu_m$. 

Dually, if $f=m\cdot e$ with a monomorphism $m$, then there is a unique isomorphism $j:P_e\to P_f$ with $\pi_f=j\cdot \pi_e$, so that $\pi_f\cong\pi_e$.
\end{lemma}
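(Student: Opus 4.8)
The plan is to reduce both statements to a comparison of the pushouts defining $N_f$ and $N_m$ (respectively the pullbacks defining $P_f$ and $P_e$), exploiting the equalizer presentation of $\nu_f$ recalled above. I treat the first assertion in detail; the second follows by dualization.

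Write $e:A\to A'$ and $m:A'\to B$, so that $f=m\cdot e$ with $e$ an epimorphism. The key step is to show that the canonical comparison morphism $\phi:B+_A1\to B+_{A'}1$ induced by $e$ is an isomorphism carrying the pushout injections $\iota_B:B\to B+_A1$ and $\iota_1:1\to B+_A1$ to those of $B+_{A'}1$. For this I would compare cocones: a pair $(g:B\to X,\,h:1\to X)$ is a cocone under the span defining $B+_{A'}1$ precisely when $g\cdot m=h\cdot {!_{A'}}$, and a cocone under the span defining $B+_A1$ precisely when $g\cdot f=h\cdot {!_A}$. Since ${!_{A'}}\cdot e={!_A}$ and $m\cdot e=f$, precomposition with $e$ shows the former condition implies the latter; conversely, because $e$ is an epimorphism, the equation $g\cdot m\cdot e=h\cdot {!_{A'}}\cdot e$ may be cancelled to recover $g\cdot m=h\cdot {!_{A'}}$. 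Thus the two pushouts represent the same cocone functor, so $\phi$ is an isomorphism compatible with both injections.

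With this in hand I would invoke the equalizer description: $\nu_f$ is the equalizer of $\iota_B$ and $\iota_1\cdot{!_B}$ into $B+_A1$, while $\nu_m$ is the equalizer of the corresponding pair $\iota_B',\,\iota_1'\cdot{!_B}$ into $B+_{A'}1$. Because $\phi$ is a monic isomorphism with $\phi\cdot\iota_B=\iota_B'$ and $\phi\cdot\iota_1=\iota_1'$, a morphism equalizes the first pair if and only if it equalizes the second, so $\nu_f$ and $\nu_m$ are equalizers of the same pair of parallel morphisms out of $B$. The universal property of equalizers then yields a unique isomorphism $i:N_f\to N_m$ with $\nu_f=\nu_m\cdot i$; uniqueness of $i$ is automatic since $\nu_m$ is a monomorphism.

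The statement is labelled trivial, and indeed no step is hard; the only point requiring care is the bookkeeping that identifies the two pushout injections under $\phi$, since it is exactly the epimorphism hypothesis on $e$---and nothing more---that is needed to cancel $e$ and close the cocone comparison. The dual statement about $\pi_f\cong\pi_e$ is obtained by replacing pushouts by pullbacks, the terminal object $1$ by the initial object $0$, and the epimorphism $e$ by the monomorphism $m$, with the coequalizer presentation of $\pi_f$ in place of the equalizer presentation of $\nu_f$.
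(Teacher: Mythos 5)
Your proof is correct. The paper in fact states this lemma without any proof (offering it as a ``trivial observation''), and your argument is precisely the intended one: the epimorphism hypothesis on $e$ identifies the cocones under the spans $1\leftarrow A\to B$ and $1\leftarrow A'\to B$, hence identifies the pushouts $B+_A1\cong B+_{A'}1$ compatibly with their injections, after which the equalizer (equivalently, pullback) presentation of $\nu_f$ and $\nu_m$, together with monicity of $\nu_m$, yields the unique isomorphism $i$; the dual argument for $\pi_f\cong\pi_e$ goes through verbatim.
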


Consequently, if every morphism in $\CC$ factors (epi, mono), it suffices to determine $N_m$ and $\nu_m$ for all monomorphisms $m$. If the factorizations are actually (strong epi, mono), so that for every morphism $f:A\to B$ and all subobjects $k$ of $A$ one has an (up to iso unique) image $f(k)$ in $B$ obtained by factoring $f\cdot k$,  the assignment
$$k\longmapsto \nu_k$$
defines in fact an idempotent {\em closure operator} of $\CC$ with respect to the class of all monomorphisms (in the sense of \cite{DT95}); that is, in the usual (pre)order for subobjects, the operator $\nu$ is extensive and monotone,
and (as a consequence of the functoriality of $f\mapsto N_f$ alluded to above) it satisfies the so-called {\em continuity condition}
$$f(\nu_k)\leq \nu_{f(k)}\,.$$
The monomorphisms closed with respect to this closure operator are precisely the normal monomorphisms.

Dually, in the presence of (epi, mono)-factorizations it suffices to compute $\pi_e$ for all epimorphisms $e$, and if the factorizations are actually (epi, strong mono), the assignment
$e\longmapsto \pi_e$ defines an idempotent {\em dual closure operator} with respect to the class of all epimorphisms (in the sense of \cite{DT15}). The epimorphisms closed with respect to this dual closure operator are precisely the normal epimorphisms.

Consequently, also in the general context of our finitely complete and finitely cocomplete category $\CC$, without any {\em a priori} factorization hypotheses, for any morphism $f$ we will refer to $\nu_f$ as the {\em normal closure} of $f$ and to $\pi_f$ as the {\em normal dual closure} of $f$. In the following examples we illustrate their construction. The characterization of normal mono- and epimorphisms follows in the next section.

\begin{examples}\label{firstexas} (1) If the category $\CC$ is {\em pointed}, {\em i.e.} if $0\cong 1$, then
$$ \nu_f\cong \mathrm{ker(coker}f)\quad\text{and}\quad\pi_f\cong\mathrm{coker(ker}f)$$
for all morphisms $f:A\to B$. Obviously then, for $\CC=\mathsf{Ab}$ the category of Abelian groups, $\nu_f:N_f\to B$ may naturally be given by  the inclusion map $\mathrm {Im}f=f(A)\hookrightarrow B$, and $\pi_f:A\to P_f$ by the quotient map $A\to A/\mathrm{Ker}f$; of course, here $N_f\cong P_f$.
By contrast, for $\CC=\mathsf{Grp}$ the category of all groups, whereas $P_f$ may be computed as in $\mathsf{Ab}$, now $N_f$ is the least normal subgroup of $B$ containing $\mathrm {Im}f\leq B$.

(2) In the category $\mathsf{CMon}$ of commutative (multiplicatively written) monoids and their unit-preserving homomorphisms, the pushout $B+_A1$ may be computed as the quotient monoid $B/\!\sim$ with respect to the congruence relation
$$x\sim y\iff \exists\; a,b\in A: (fa)x=(fb)y$$
on $B$. Hence, with $e$ neutral  in $B$ and the projection $p:B\to B/\!\!\sim$, one obtains the normal closure $\nu_f:N_f\hookrightarrow B$ with
$$N_f=p^{-1}(pe)=\{x\in B\mid \exists\; a,b\in A: (fa)x=fb\}=\{x\in B\mid f(A)x\cap f(A)\neq\emptyset\}\;.$$
(We note that, if every element of $f(A)$ is invertible in $B$, in particular if $B$ happens to be a group, then $N_f=\{\frac{fa}{fb}\in B\mid a,b\in A\}$ is the symmetrization of $f(A)$ in $B$.) The normal dual closure $\pi_f: A\to P_f=A/\!\!\simeq$ is obtained similarly, with the congruence relation $\simeq$ on $A$ given by
$$ a\simeq b\iff \exists \;u,v\in\mathrm{Ker}f=f^{-1}e: ua=vb\,.$$

(3) In the (non-pointed) category $\mathsf{CRing}_1$ of commutative unital rings and their unital homomorphisms, the terminal object is strict (every out-going morphism is an isomorphism), so that the normal closure of a morphism $f:A\to B$ may be taken to be the identity morphism on $B$. For the normal dual closure of $f$, with the initial object given by the integers $\mathbb Z$, one has $C:=\mathbb Z\times_B A=\{(n,x)\mid x\in A, n\in \mathbb Z, fx=n1_B\}$. Since $\{x-n1_A\mid (n,x)\in C\}=\mathrm{Ker}f$, one may construct the pushout $P_f$ as $A/\mathrm{Ker}f\cong\mathrm{Im}f$.

(4) In the (non-pointed) category $\mathsf{Set}$ of sets, the normal closure of $f$ may be given (as in  $\mathsf{Ab}$) by $f(A)\hookrightarrow B$ while $\pi_f$ may be taken to be the identity map on $A$. By contrast, in the pointed category $\mathsf{Set}_*$ of pointed sets $(A,a)$ with $a\in A$ and point-preserving maps, with zero object $(1=\{*\},*)$, while the normal closure may be given as in $\mathsf{Set}$, the normal dual closure of $f:(A,a)\to(B,b)$ may be taken to be the projection $\pi_f:(A,a)\to(A/\!\!\sim, \pi_f(a))$ where $\sim$ identifies precisely any pair of distinct points in $f^{-1}b$. Alternatively, one may present the quotient set as $(1+(A\setminus f^{-1}b),*)$.

(5) In the category $\mathsf{Top}$ of topological spaces, for the normal closure of $f$ one lets the set $N_f=f(A)$ carry the subspace topology of $B$. Its normal dual closure trivializes like in $\mathsf{Set}$.
 In the category $\mathsf{Top}_*$ of pointed topological spaces, one may proceed as in $\mathsf{Set}_*$, providing the set $P_f=A/\!\!\sim$ with the quotient topology of $A$.

(6) In the category $\mathsf{Top}_1$ of T$_1$-spaces, the normal dual closure of $f:A\to B$ is again trivial, {\em i.e.} $\pi_f\cong\mathrm{id}_A$. However, unlike in $\TOP$, the normal closure $\nu_f$ may now be given by the usual {\em Kuratowski closure} $\overline{f(A)}$ of $f(A)$ in $B$, {\em i.e.} $\nu_f: \overline{f(A)}\hookrightarrow B$. This is clear for $A=\emptyset$, in which case the pushout $B+_A1$ in $\TOP_1$ is just the topological sum $B+1$. If $A\neq\emptyset$, we fix $a_0\in A$ and see that the pushout $B+_A1$ in $\TOP_1$ may be constructed as the quotient space $B/\!\sim$, where now $\sim$ identifies precisely all pairs of points in $\overline{f(A)}$. 

Indeed, this quotient space is certainly 
T$_1$ since any non-singleton fibre $p^{-1}(px)$ of the projection $p:B\to B/\!\!\sim$ must equal the closed set  $\overline{f(A)}$ in $B$, and since the singleton fibres are trivially closed in the T$_1$-space $B$. 
Furthermore, to show that 
$$\xymatrix{A\ar[r]\ar[d]_f& \mathsf 1\ar[d]^{pa_0}\\
B\ar[r]^{p} & B/\!\sim\\
}$$ 
has the universal property of a pushout, it suffices to see that any continuous map $g:B\to C$ in $\TOP_1$ that is constant on $f(A)$ must actually be constant on $\overline{f(A)}$. But this is an immediate consequence of the continuity of $g$ and the T$_1$-property of $C$. Indeed, $$g(\overline{f(A)})\subseteq \overline{g(f(A))}=g(f(A)).$$
(We will repeat this argumentation in a more general context in the proof of Proposition \ref{T1pushout}.)

The transition from $\TOP_1$ to the pointed category $(\TOP_1)_*$ proceeds like the transition from $\TOP$ to $\TOP_*$.

(7) To obtain the normal closure and normal dual closure of a morphism in the category $\mathsf{TopAb}$ of topological Abelian groups ({\em i.e.} of group objects in $\TOP$) one provides the constructions done in $\mathsf{Ab}$ with the subspace and quotient topologies, respectively. When $\mathsf{Ab}$ is replaced by $\mathsf{Grp}$, {\em i.e.} in the category $\mathsf{TopGrp}$, the dual normal closure is formed like in $\mathsf{TopAb}$, and for the normal closure of a morphism one provides the least normal subgroup containing the image with the subspace topology. 

(8) Considering only Hausdorff group topologies, we have the same normal dual closure constructions in the categories $\mathsf{HausAb}$ and $\mathsf{HausGrp}$ as in the non-Hausdorff case, but to form the normal closure of $f:A\to B$, in $\mathsf{HausAb}$ we must provide the topological closure of $\mathrm{Im}f$ in $B$ with the subspace topology, and in $\mathsf{HausGrp}$ we must first form the least normal subgroup of $\mathrm{Im}f$ in $B$ and then take the topological closure in $B$.
\end{examples}

\section{The normal decomposition of a morphism}
To form the desired decomposition of a morphism $f:A\to B$ in a finitely complete and finitely cocomplete category $\CC$, in the notation of Section 3 we first consider the factorization $f=\nu_f\cdot\hat{f}$ through its normal closure. Since $\nu_f$ is monic, by Lemma \ref{triviallemma}, the morphism $\hat{f}$ has  the same normal dual closure as $f$, up to a unique isomorphism $i:P_{\hat{f}}\to P_f$ satisfying $i\cdot\pi_{\hat{f}}=\pi_f$ and $\check{f}\cdot i=\nu_f\cdot\check{\hat{f}}$. Dually, the morphism $\check{f}$ belonging to the normal dual closure factorization of $f$ has the same normal closure as $f$, up to a unique isomorphism $j:N_f\to N_{\check{f}}$ satisfying $j\cdot \hat{f}=\hat{\check{f}}\cdot\pi_f$ and $\nu_{\check{f}}\cdot j=\nu_f$. Since $\pi_{\hat{f}}$ is epic (or $\nu_{\check{f}}$ is monic), one also has $j\cdot \check{\hat{f}}=\hat{\check{f}}\cdot i$.
$$\xymatrix{& P_{\hat{f}}\ar[rr]^{\check{\hat{f}}}\ar[dd]_(0.65){i}|(.33)\hole|(.5)\hole && N_f \ar[rd]^{\nu_f}\ar[dd]^(0.35){j}|(.5)\hole|(.68)\hole & \\
A\ar[ru]^{\pi_{\hat{f}}}\ar[rd]_{\pi_f}\ar[rrrr]|f\ar[rrru]|{\hat{f}}&&&& B\\
& P_f\ar[rr]_{\hat{\check{f}}}\ar[rrru]|{\check{f}} && N_{\check{f}}\ar[ru]_{\nu_{\check{f}}} &\\
}$$
Setting $\kappa_f:=\check{\hat{f}}\cdot i^{-1}=j^{-1}\cdot\check{\hat{f}}$ we obtain the following commutative diagram on the left. Of course, $\kappa_f$ is also determined as the unique diagonal morphism rendering the diagram on the right commutative:
$$\xymatrix{&P_f\ar[rr]^{\kappa_f} && N_f\ar[rd]^{\nu_f} & \\
A\ar[rrrr]^f\ar[ru]^{\pi_f} &&&& B \\}
\qquad\qquad
\xymatrix{A\ar[rr]^{\hat{f}}\ar[d]_{\pi_f} && N_f\ar[d]^{\nu_f}\\
P_f \ar[rr]_{\check{f}}\ar[rru]|{\kappa_f} && B\\}
$$
\begin{definition}
We refer to $f=\nu_f\cdot\kappa_f\cdot\pi_f$ as the {\em normal decomposition} of $f$ and call $\kappa_f$ its {\em comparison morphism}.
\end{definition}

By Theorem \ref{firsttheorem}, for all morphisms $f$ one has
$$\pi_f\in \CP:=\mathsf{NEpi}(\CC)\quad\text{and}\quad\nu_f\in\CN:=\mathsf{NMono}(\CC).$$
This raises the question whether the comparison morphisms $\kappa_f$ have a ``natural home'' as well. Only in good cases do we have a convenient description of such home, as follows.
\begin{theorem}\label{perfecttheorem}
{\em (1)} The following are equivalent:
\begin{itemize}
\item[\em (i)] The class $\CN$ is closed under composition.
\item[\em (ii)] The class $\CN$ belongs to an o.f.s. $(\mathcal C,\CN)$.
\item[\em (iii)] Every comparison morphism lies in $^{\bot}\CN$.
\end{itemize}
{\em (2)} The following are equivalent:
\begin{itemize}
\item[\em (i)] The class $\CP$ is closed under composition.
\item[\em (ii)] The class $\CP$ belongs to an o.f.s. $(\CP,\CF)$.
\item[\em (iii)] Every comparison morphism lies in $\CP^{\bot}$.
\end{itemize}
{\em (3)} Every comparison morphism lies in $^{\bot}\CN\cap\CP^{\bot}$ if, and only if, the classes $\CN$ and $\CP$ are closed under composition; equivalently, if for all morphisms $f$,
$$\kappa_f\in\CK:=\{k\in\mathrm{mor}(\CC)\mid \nu_k\; \text{and}\; \pi_k\;\text{are isomorphisms}\}\,,$$
in which case we call the normal decompositions in $\CC$ {\em perfect}.
\end{theorem}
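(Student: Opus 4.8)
The plan is to prove part (1) by running the equivalences (i) $\Leftrightarrow$ (ii) $\Leftrightarrow$ (iii), to deduce part (2) by reading part (1) in $\CC^{\op}$, and then to assemble part (3) from the two. The single structural fact I would keep in view is that the $\CN$-reflection $\hat{f}$ of $f$ factors as $\hat{f}=\kappa_f\cdot\pi_f$, which is exactly the right-hand triangle in the diagram that defines $\kappa_f$.

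The equivalence (i) $\Leftrightarrow$ (ii) costs nothing: Theorem \ref{firsttheorem} gives reflectivity of $\CN$, so Proposition \ref{ofschar} equates ``$\CN$ closed under composition'' with ``$\CN$ belongs to an o.f.s. $(\mathcal C,\CN)$'', necessarily with $\mathcal C={}^{\bot}\CN$. For (ii) $\Rightarrow$ (iii) I would observe that in such an o.f.s. the reflection factorization $f=\nu_f\cdot\hat{f}$ is, up to isomorphism, the $(\mathcal C,\CN)$-factorization, so $\hat{f}\in\mathcal C={}^{\bot}\CN$; as $\mathcal C$ is the coreflective left class of an o.f.s., the dual of Remark \ref{firstremark} endows it with weak right cancellation, and applying this to $\hat{f}=\kappa_f\cdot\pi_f$ with $\pi_f$ epic (indeed $\pi_f\in\CP\subseteq\mathsf{REpi}(\CC)$) forces $\kappa_f\in{}^{\bot}\CN$.

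The step I expect to be the crux is (iii) $\Rightarrow$ (ii). The decisive observation is that $\pi_f$ itself always lies in ${}^{\bot}\CN$: being a normal, hence regular, epimorphism, it admits a unique diagonal against every monomorphism, in particular against every normal monomorphism. Granting (iii) one then has both factors of $\hat{f}=\kappa_f\cdot\pi_f$ in ${}^{\bot}\CN$, and since orthogonal complements are closed under composition, $\hat{f}\in{}^{\bot}\CN$. Thus every $f$ factors as $\nu_f\cdot\hat{f}$ with $\nu_f\in\CN$ and $\hat{f}\in{}^{\bot}\CN$, which, together with the tautological orthogonality of ${}^{\bot}\CN$ against $\CN$, exhibits $({}^{\bot}\CN,\CN)$ as an o.f.s. and yields (ii). The whole weight of this implication rests on the membership $\pi_f\in{}^{\bot}\CN$; everything downstream of it is formal.

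Finally, I would obtain part (2) by applying part (1) in $\CC^{\op}$, where $\CP=\mathsf{NEpi}(\CC)$ becomes $\mathsf{NMono}(\CC^{\op})$, the comparison morphism of $f$ becomes that of $f^{\op}$, and ${}^{\bot}\CN$ is replaced by $\CP^{\bot}$. For part (3), combining (1) and (2) shows that every $\kappa_f$ lies in ${}^{\bot}\CN\cap\CP^{\bot}$ precisely when both $\CN$ and $\CP$ are closed under composition. To identify this intersection with $\CK$, I would invoke Remark \ref{secondremark}(2), which gives ${}^{\bot}\CN=\{k\mid\nu_k\ \text{is an isomorphism}\}$, together with its dual $\CP^{\bot}=\{k\mid\pi_k\ \text{is an isomorphism}\}$; intersecting yields ${}^{\bot}\CN\cap\CP^{\bot}=\CK$, so the condition $\kappa_f\in\CK$ says exactly the same thing, which is what licenses calling such decompositions \emph{perfect}.
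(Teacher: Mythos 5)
Your proof is correct and follows essentially the same route as the paper: (i)$\iff$(ii) via Proposition \ref{ofschar}, then $\kappa_f\in{}^{\bot}\CN$ from $\hat{f}=\kappa_f\cdot\pi_f$ with $\pi_f$ epic and weak right cancellation, the converse from $\pi_f\in{}^{\bot}\CN$ plus closure of orthogonal complements under composition, and part (3) by combining (1), (2) and Remark \ref{secondremark}(2). The only differences are cosmetic: you run (ii)$\Rightarrow$(iii) where the paper runs (i)$\Rightarrow$(iii), and you spell out why $\pi_f\in{}^{\bot}\CN$ (regular epi orthogonal to mono), which the paper dismisses as trivial.
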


\begin{proof}
(1) The equivalence (i)$\iff$(ii) follows from Proposition \ref{ofschar} and Theorem \ref{firsttheorem}. For (i)$\Rightarrow$(iii), we note that the proof of Proposition \ref{ofschar} shows $\hat{f}=\kappa_f\cdot\pi_f\in\, ^{\bot}\CN$ for all morphisms $f$. Since $\pi_f$ is epic, this implies $\kappa_f\in\,^{\bot}\CN$, by the dual of the cancellation rule of Remark \ref{firstremark}.
For (iii)$\Rightarrow$(ii), since trivially $\pi_f\in \,^{\bot}\CN$, and since every (left or right) orthogonal complement is closed under composition, one has $\hat{f}=\kappa_f\cdot\pi_f\in\,^{\bot}\CN$. Hence, the normal closure of $f$ provides the $(^{\bot}\CN,\CN)$-factorization of $f$.

(2) is the dualization of (1). The first part of (3) follows from (1) and (2), and for the equivalent formulation given in its second part see also Remark \ref{secondremark}(2), which implies $\CK=\, ^{\bot}\CN\cap\CP^{\bot}$.
\end{proof}

\begin{corollary}
 If all comparison morphisms in $\CC$ are epic, then every strong monomorphism in $\CC$ is normal. Dually:
if all comparison morphisms in $\CC$ are monic, then every strong epimorphism in $\CC$ is normal.
\end{corollary}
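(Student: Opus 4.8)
The plan is to read off everything from the normal decomposition $m=\nu_m\cdot\kappa_m\cdot\pi_m$ of a strong monomorphism $m\colon A\to B$, together with the identity $\hat m=\kappa_m\cdot\pi_m$ coming from the upper triangle of the right-hand diagram preceding the Definition of the normal decomposition. Recall that $m$ is normal exactly when the induced morphism $\hat m$ (with $\nu_m\cdot\hat m=m$) is an isomorphism. Hence the entire task reduces to proving that $\hat m$ is invertible, and I would pursue this by showing $\hat m$ is at once epic and split monic.

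First I would check that $\hat m$ is an \emph{epimorphism}. Since $\hat m=\kappa_m\cdot\pi_m$, where $\kappa_m$ is epic by hypothesis and $\pi_m\in\CP=\mathsf{NEpi}(\CC)\subseteq\mathsf{REpi}(\CC)$ is in particular an epimorphism, the composite $\hat m$ is epic. Note that this uses only that the comparison morphisms are epic, not that they lie in any orthogonal complement.

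Next I would manufacture a one-sided inverse of $\hat m$ from the strong-monomorphism property of $m$. The factorization $m=\nu_m\cdot\hat m$, with $\nu_m\in\CN$ monic, yields a commutative square whose left edge is the epimorphism $\hat m\colon A\to N_m$, whose right edge is $m$, whose top edge is $1_A$, and whose bottom edge is $\nu_m$ (commutativity being $m\cdot 1_A=m=\nu_m\cdot\hat m$). Because $m$ is a strong monomorphism, this square admits a diagonal fill-in $d\colon N_m\to A$, which in particular satisfies $d\cdot\hat m=1_A$; thus $\hat m$ is a split monomorphism. Finally, a morphism that is simultaneously split monic and epic is an isomorphism: from $d\cdot\hat m=1_A$ one obtains $\hat m\cdot d\cdot\hat m=\hat m=1_{N_m}\cdot\hat m$, and cancelling the epimorphism $\hat m$ on the right gives $\hat m\cdot d=1_{N_m}$. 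Hence $\hat m$ is an isomorphism and $m$ is normal. The dual statement follows by passing to $\CC^{\op}$.

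I expect the only genuinely non-routine point to be the correct placement of the lifting square, so that the diagonal splits $\hat m$ itself (rather than $\nu_m$ or $m$); once the square is set up with $\hat m$ as its left edge, the remaining arguments are standard epic/monic cancellations.
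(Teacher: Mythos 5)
Your proof is correct, and its first half coincides with the paper's: both arguments begin by observing that $\hat m=\kappa_m\cdot\pi_m$ is epic, since $\kappa_m$ is epic by hypothesis and $\pi_m\in\mathsf{NEpi}(\CC)\subseteq\mathsf{REpi}(\CC)$ is in particular an epimorphism. The two proofs then diverge in how they exploit this. The paper argues globally: because $\hat f$ is epic for \emph{every} morphism $f$, the category has (epi, normal mono)-factorizations, which are automatically orthogonal (normal monomorphisms are regular, and regular monomorphisms always satisfy the diagonal property against epimorphisms); hence $(\mathsf{Epi}(\CC),\CN)$ is an o.f.s., so $\CN=\mathsf{Epi}(\CC)^{\bot}$, which is by definition the class of strong monomorphisms. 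Your argument is local and elementary: you apply the strong-monomorphism fill-in directly to the square $m\cdot 1_A=\nu_m\cdot\hat m$, obtain $d$ with $d\cdot\hat m=1_A$, and then cancel the epimorphism $\hat m$ to get $\hat m\cdot d=1_{N_m}$ --- which is precisely the retract argument that underlies the o.f.s. fact the paper quotes, carried out by hand. What your route buys is self-containedness (no appeal to the theory of orthogonal factorization systems) and a localized use of the hypothesis: you only need $\kappa_m$ epic for the particular strong monomorphism $m$ under consideration. What the paper's route buys is the stronger conclusion that $(\mathsf{Epi}(\CC),\CN)$ is an orthogonal factorization system and that, under the stated hypothesis, the normal monomorphisms coincide \emph{exactly} with the strong ones. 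Your disposal of the dual statement by passing to $\CC^{\op}$ is also legitimate, since the normal decomposition and the comparison morphism are self-dual notions.
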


\begin{proof}
If $\kappa_f$ is epic, then so is $\hat{f}=\kappa_f\cdot \pi_f$, for all morphisms $f$. So, by hypothesis, $\CC$ has (necessarily orthogonal) (epi, normal mono)-factorizations, which makes the class $\CN$ the right orthogonal complement of the class of epimorphisms. But by definition, this class coincides with the class of strong monomorphisms.
\end{proof}

A category $\CC$ is {\em quasi-abelian} if $\CC$ is additive and has kernels and cokernels, and if normal epimorphisms are stable under pullback and normal monomorphisms are stable under pushout; see \cite{Schneiders99}. Quasi-abelian categories are called {\em almost abelian} in \cite{Rump01} but the concept may be traced back to \cite{Yoneda60}. It has been shown in \cite{Rump01} that in such categories all comparison morphisms are both epic and monic. Therefore:

\begin{corollary}\label{quasi-abelian}
If the category $\CC$ is quasi-abelian, every strong mono- or epimorphism in $\CC$ is normal, and its normal decompositions are perfect.
\end{corollary}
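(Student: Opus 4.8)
The statement splits into two assertions, and I would obtain both by feeding a single external input into the machinery already assembled: the fact, recalled above from \cite{Rump01}, that in a quasi-abelian category every comparison morphism $\kappa_f$ is simultaneously epic and monic. Granting this, the first assertion is immediate. Since all comparison morphisms are epic, the preceding Corollary shows that every strong monomorphism is normal; and since they are also all monic, the dual half of that Corollary shows that every strong epimorphism is normal.

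For the perfectness of the normal decompositions I would invoke Theorem \ref{perfecttheorem}(3), which reduces the claim to verifying that $\kappa_f\in\CK$ for every morphism $f$, where $\CK={}^{\bot}\CN\cap\CP^{\bot}=\{k\mid\nu_k\text{ and }\pi_k\text{ are isomorphisms}\}$. The plan is therefore to isolate the purely formal statement that \emph{every bimorphism---a morphism that is at once epic and monic---lies in $\CK$}, and then to apply it to the comparison morphisms, which are bimorphisms by \cite{Rump01}.

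This bimorphism claim is where the (modest) content lies, and Lemma \ref{triviallemma} disposes of it. If $k$ is monic, I factor it trivially as $k=k\cdot 1$, with the monomorphism $k$ as the second factor and an identity as the first; the Lemma then gives $\pi_k\cong\pi_{1}$, and $\pi_{1}$ is an isomorphism because an identity is a normal epimorphism. Hence $k\in\CP^{\bot}$. Dually, factoring $k=1\cdot k$ when $k$ is epic yields $\nu_k\cong\nu_{1}$, again an isomorphism, so $k\in{}^{\bot}\CN$. A morphism that is both epic and monic thus lies in ${}^{\bot}\CN\cap\CP^{\bot}=\CK$; applied to every $\kappa_f$, this verifies the hypothesis of Theorem \ref{perfecttheorem}(3) and establishes perfectness.

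Since everything after importing the result of \cite{Rump01} is formal, I expect no genuine obstacle. The only point deserving care is the observation that the degenerate factorizations $k=k\cdot 1$ and $k=1\cdot k$ satisfy the hypotheses of Lemma \ref{triviallemma}, so that being monic alone forces $\pi_k$ to trivialize and being epic alone forces $\nu_k$ to trivialize; the substantive mathematics of the Corollary is carried entirely by the cited fact that comparison morphisms in quasi-abelian categories are bimorphisms.
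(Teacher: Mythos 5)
Your proof is correct. The first assertion is handled exactly as the paper intends: Rump's result that comparison morphisms in a quasi-abelian category are epic and monic, fed into the corollary preceding Corollary \ref{quasi-abelian}. For perfectness, however, you take a genuinely different route. The paper offers no written proof beyond ``Therefore'', but its intended argument piggybacks on the proof of that same preceding corollary: epic comparison morphisms make $f=\nu_f\cdot\hat{f}$ an orthogonal $(\mathrm{Epi},\CN)$-factorization, so $\CN$ belongs to an o.f.s.\ and is closed under composition by Theorem \ref{perfecttheorem}(1)(ii)$\Rightarrow$(i) (equivalently, Proposition \ref{ofschar}); dually, monic comparison morphisms give closure of $\CP$ under composition, and perfectness follows by definition. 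You instead verify the criterion of Theorem \ref{perfecttheorem}(3) directly, by isolating the purely formal fact that every bimorphism lies in $\CK$: for $k$ monic, the degenerate factorization $k=k\cdot 1$ and Lemma \ref{triviallemma} give $\pi_k\cong\pi_{1}$, an isomorphism, and dually $k=1\cdot k$ for $k$ epic gives $\nu_k\cong\nu_{1}$. This observation is valid and worth recording on its own---in any finitely complete and finitely cocomplete category one has $\mathrm{Epi}(\CC)\cap\mathrm{Mono}(\CC)\subseteq\CK$---and it makes your perfectness argument self-contained, never mentioning strong monomorphisms or the induced factorization systems, and keeping the two assertions of the corollary logically independent. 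What the paper's route buys instead is the extra structural information that, under Rump's hypothesis, $\CN$ and $\CP$ are precisely the classes of strong monomorphisms and strong epimorphisms, which is the content of the first assertion anyway.
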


In the general context of our finitely complete and finitely cocomplete category $\CC$ we finally emphasize the functoriality of the passage from a morphism to its normal decomposition. More precisely, by the coreflectivity of $\CP=\mathsf{NEpi}(\CC)$ and the reflectivity of $\CN=\mathsf{NMono}(\CC)$ in $\mathsf{Arr}(\CC)$ we have endofunctors $\pi$ and $\nu$ of $\mathsf{Arr}(\CC)$ given by
$$ f\longmapsto \pi_f\quad\text{and}\quad f\longmapsto \nu_f,$$
which actually carry the structure of a comonad and a monad, respectively. Composing the coreflection of $f$ with the reflection of $f$ in $\mathsf{Arr}(\CC)$ defines a natural transformation $\alpha:\pi\Longrightarrow \nu$, {\em i.e.} componentwise one has 
 $$\alpha_f=\langle\hat{f},\check{f}\rangle:\pi_f\longrightarrow\nu_f\,.$$
 
 \begin{proposition}
 Assigning to every morphism $f$ in $\CC$ its comparison morphism $\kappa_f$ defines an endofunctor of $\mathsf{Arr}(\CC)$ through which the natural transformation $\alpha$ factors, i.e, there are natural transformations $\rho:\pi\Longrightarrow\kappa$ and $\sigma:\kappa\Longrightarrow\nu$ with $\alpha=\sigma\cdot\rho$.
 \end{proposition}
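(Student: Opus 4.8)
The plan is to realise $\kappa$ as the ``middle part'' of the two closure functors $\pi$ and $\nu$ and to exploit that $\pi_f$ is always epic (being a normal, hence regular, epimorphism) in order to force the required commutativity. I would first define $\kappa$ on objects by $f\mapsto\kappa_f$, viewing $\kappa_f:P_f\to N_f$ as an object of $\Arr(\CC)$, and on a morphism $\langle u,v\rangle:f\to g$ by $\kappa\langle u,v\rangle:=\langle s,t\rangle$, where $s:P_f\to P_g$ and $t:N_f\to N_g$ are the comparison maps read off from $\pi$ and $\nu$ respectively: since the coreflection counit is $\langle 1_A,\check f\rangle$ and the reflection unit is $\langle\hat f,1_B\rangle$, one has $\pi\langle u,v\rangle=\langle u,s\rangle$ and $\nu\langle u,v\rangle=\langle t,v\rangle$. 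Writing $s=\mathsf{cod}(\pi\langle u,v\rangle)$ and $t=\mathsf{dom}(\nu\langle u,v\rangle)$ exhibits $f\mapsto P_f,\langle u,v\rangle\mapsto s$ and $f\mapsto N_f,\langle u,v\rangle\mapsto t$ as composites of $\pi,\nu$ with the domain/codomain projections $\Arr(\CC)\to\CC$, hence as functors. Consequently, once I know that $\langle s,t\rangle$ really is a morphism $\kappa_f\to\kappa_g$ in $\Arr(\CC)$, the functoriality of $\kappa$ (preservation of identities and of composites) is inherited for free from that of $\pi$ and $\nu$.

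The crux is to show that the square $\kappa_g\cdot s=t\cdot\kappa_f$ commutes; this is the main obstacle, since the other two coordinates of the relevant diagrams are handed to us directly by the coreflection and reflection. I would prove it by precomposing with $\pi_f$ and cancelling it, which is legitimate because $\pi_f$ is epic. Using the defining identities $\kappa_f\cdot\pi_f=\hat f$ and $\kappa_g\cdot\pi_g=\hat g$, together with the relations $s\cdot\pi_f=\pi_g\cdot u$ (coming from $\pi\langle u,v\rangle$ being a morphism $\pi_f\to\pi_g$) and $t\cdot\hat f=\hat g\cdot u$ (coming from $\nu\langle u,v\rangle$ being a morphism $\nu_f\to\nu_g$), I compute
$$\kappa_g\cdot s\cdot\pi_f=\kappa_g\cdot\pi_g\cdot u=\hat g\cdot u=t\cdot\hat f=t\cdot\kappa_f\cdot\pi_f,$$
and cancelling the epimorphism $\pi_f$ yields $\kappa_g\cdot s=t\cdot\kappa_f$. (Dually, one could instead cancel the monic $\nu_g$, using $\nu_f\cdot\kappa_f=\check f$ and $\check g\cdot s=v\cdot\check f$.)

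Finally I would define $\rho_f:=\langle\pi_f,\kappa_f\rangle:\pi_f\to\kappa_f$ and $\sigma_f:=\langle\kappa_f,\nu_f\rangle:\kappa_f\to\nu_f$. These are genuine morphisms of $\Arr(\CC)$ because the squares they must satisfy read $\kappa_f\cdot\pi_f=\kappa_f\cdot\pi_f$ and $\nu_f\cdot\kappa_f=\nu_f\cdot\kappa_f$, which hold trivially. Naturality of $\rho$ reduces, upon expanding $\kappa\langle u,v\rangle\cdot\rho_f=\langle s\cdot\pi_f,\,t\cdot\kappa_f\rangle$ and $\rho_g\cdot\pi\langle u,v\rangle=\langle\pi_g\cdot u,\,\kappa_g\cdot s\rangle$, to the identities $s\cdot\pi_f=\pi_g\cdot u$ and the commutativity $t\cdot\kappa_f=\kappa_g\cdot s$ from the crux; naturality of $\sigma$ reduces symmetrically to $t\cdot\kappa_f=\kappa_g\cdot s$ together with $\nu_g\cdot t=v\cdot\nu_f$. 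The factorisation $\alpha=\sigma\cdot\rho$ is then immediate componentwise, since
$$\sigma_f\cdot\rho_f=\langle\kappa_f,\nu_f\rangle\cdot\langle\pi_f,\kappa_f\rangle=\langle\kappa_f\cdot\pi_f,\,\nu_f\cdot\kappa_f\rangle=\langle\hat f,\check f\rangle=\alpha_f.$$
Thus the entire statement rests on the single cancellation argument of the second paragraph, all other verifications being formal consequences of the functoriality of $\pi$ and $\nu$ already secured by Theorem \ref{firsttheorem}.
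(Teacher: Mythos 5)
Your proof is correct and follows essentially the same route as the paper's: the paper likewise defines $\kappa$ on a morphism $\langle u,v\rangle:f\to g$ via the induced maps $P_{u,v}=s$ and $N_{u,v}=t$, establishes the middle square $\kappa_g\cdot P_{u,v}=N_{u,v}\cdot\kappa_f$ by cancelling the epimorphism $\pi_f$ (or, dually, the monomorphism $\nu_g$), and then sets $\rho_f=\langle\pi_f,\kappa_f\rangle$, $\sigma_f=\langle\kappa_f,\nu_f\rangle$ so that $\sigma_f\cdot\rho_f=\alpha_f$. The only difference is cosmetic: you spell out the naturality checks that the paper dismisses as routine.
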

 
 \begin{proof}
 Given a morphism $\langle u,v\rangle:f\to g$ in $\mathsf{Arr}(\CC)$, consider the following diagram on the left:
 $$\xymatrix{A\ar[rr]^u\ar[d]_{\pi_f}\ar@/_22pt/[dd]_{\hat{f}} && C\ar[d]^{\pi_g}\\
 P_f\ar[rr]^{P_{u,v}}\ar[d]_{\kappa_f} && P_g\ar[d]^{\kappa_g}\ar@/^22pt/[dd]^{\check{g}}\\
 N_f\ar[rr]^{N_{u,v}}\ar[d]_{\nu_f} && N_g\ar[d]^{\nu_g}\\
 B\ar[rr]^v && D\\}
 \qquad\qquad\xymatrix{ && \\ 
 A\ar[r]^{\pi_f}\ar[d]_{\pi_f}\ar@/^20pt/[rr]^{\hat{f}} & P_f\ar[d]^{\kappa_f }\ar[r]^{\kappa_f}& N_f\ar[d]^{\nu_f}\\
 P_f\ar[r]^{\kappa_f}\ar@/_20pt/[rr]_{\check{f}} & N_f\ar[r]^{\nu_f} & B\\
 }$$
 Here the morphism $N_{u,v}$ satisfies its defining identities $N_{u,v}\cdot \hat{f}=\hat{g}\cdot u$ and $\nu_g\cdot N_{u,v}=v\cdot \nu_f$; likewise, $P_{u,v}$ satisfies $P_{u,v}\cdot \pi_f=\pi_g\cdot u$ and $\check{g}\cdot P_{u,v}=v\cdot \check{f}$. Since $\nu_g$ is monic (and $\pi_f$ is epic), the middle square on the left commutes, and we can define the functor $\kappa:\mathsf{Arr}(\CC)\to\mathsf{Arr}(\CC)$ on morphisms by $\kappa\langle u,v\rangle=\langle P_{u,v}, N_{u,v}\rangle$. With the trivially commuting diagram on the right, one defines $\rho_f:=\langle \pi_f,\kappa_f\rangle:\pi_f\to \kappa_f$ and $\sigma_f:=\langle \kappa_f,\nu_f\rangle:\kappa_f\to\nu_f$, so that $\sigma_f\cdot\rho_f=\alpha_f$. The naturality check for $\rho$ and $\sigma$ is a routine matter as well.
 \end{proof}
 
We now return to Examples \ref{firstexas}.

\begin{examples}\label{firstexamplesagain}
(1) Trivially, all mono- and epimorphisms in $\mathsf{Ab}$ are normal, and normal decompositions are perfect, with bijective comparison morphisms: see Corollary \ref{quasi-abelian}. In  the semi-abelian (\cite{JMT02, BorceuxBourn04}) category $\mathsf{Grp}$, however, while all epimorphisms are normal, normal monomorphisms must isomorphically be given by embeddings of normal subgroups, which generally fail to be closed under composition. Therefore normal decompositions in $\mathsf{Grp}$ fail to be perfect.

(2) In $\mathsf{CMon}$, normal epimorphisms are given by surjections $f:A\to B$ with the weak injectivity property $(fa=fb\Longrightarrow a\simeq b)$ for all $a,b\in A$; they are closed under composition. (They are also stable under pullback, which makes the category $\mathsf{CMon}$ {\em 0-normal}: see \cite{Messora24}.) Normal monomorphisms are isomorphically given by submonoid embeddings $A\hookrightarrow B$ satisfying the cancellation property $(ax\in A, a\in A \Longrightarrow x\in A)$ for all $x\in B$. They are closed under composition as well, so that normal decompositions in $\mathsf{CMon}$ are perfect.

(3) In $\mathsf{CRing}_1$, all surjections are normal epimorphisms, but only isomorphisms are normal monomorphisms. Normal decompositions are perfect, with injective comparison morphisms.

(4) All injections in $\SET$ are normal monomorphisms, but only bijections are normal epimorphisms. With surjective comparison morphisms, normal decompositions are perfect. This is true also in $\SET_*$, where normal monomorphisms are described as in $\SET$. The normal epimorphisms $f:(A,a)\to (B,b)$ are characterized as those surjections whose restriction to $A\setminus f^{-1}b$ is injective, and the comparison morphisms are those surjections $f$ with $|f^{-1}b|=1$.
 
(5) In $\TOP$ and $\TOP_*$ one just topologizes the normal decompositions of $\SET$ and $\SET_*$,  using subspace and quotient space topologies. The normal decompositions stay perfect.

(6) In $\TOP_1$ the normal monomorphisms are isomorphically characterized as embeddings of closed subspaces---not of all subspaces as in $\TOP$. But like in $\TOP$, normal epimorphisms in $\TOP_1$ are confined to be homeomorphisms, making the normal decompositions again perfect, with all comparison maps having a dense image in their codomains.
This remains true in the pointed category $(\TOP_1)_*$, where normal monomorphisms are characterized as in $\TOP_1$, but where normal epimorphisms are all quotient maps which map $A\setminus f^{-1}b$ injectively.

(7) In the quasi-abelian category $\mathsf{TopAb}$, normal mono- and epimorphisms are isomorphically characterized as topological subgroup embeddings and as topological quotient homomorphisms, respectively. The normal decompositions are perfect, with continuous bijective homomorphisms as comparison morphisms. The characterization of normal epimorphisms doesn't change if we move to $\mathsf{TopGrp}$, but here only the topological embeddings of (algebraically) normal subgroups characterize normal monomorphisms, and normal decompositions are no longer perfect.

(8) Also in $\mathsf{HausAb}$ and $\mathsf{HausGrp}$ normal epimorphisms are described as topological quotient homomorphisms. The normal monomorphisms in $\mathsf{HausAb}$ are characterized as closed subgroup embeddings, and normal decompositions are perfect, where injective continuous homomorphisms with dense image serve as comparison morphisms. In $\mathsf{HausGrp}$ one must consider embeddings of closed normal subgroups to characterize normal monomorphisms, and normal decompositions are no longer perfect.
\end{examples}

\section{Orthogonal threefold factorization systems}

In this section we briefly discuss how the normal decompositions in a category $\CC$ with finite limits and finite colimits fare in the context of orthogonal threefold factorization systems as introduced in \cite{PultrTholen02} under the name ``double factorization systems''.\footnote{We avoid this terminology as it may be mistaken to concern factorization systems for a double category.}

\begin{definition}
One calls a triple $(\CP,\CK,\CN)$ of replete\footnote{It suffices to assume $\mathrm{Iso}(\CC)\cdot \CP\subseteq \CP,\; \mathrm{Iso}(\CC)\cdot\CK\cdot\mathrm{Iso}(\CC)\subseteq\CK$ and $\CN\cdot\mathrm{Iso}(\CC)\subseteq\CN$.} classes of morphisms an {\em orthogonal threefold factorization system (o.t.f.s.)} of $\CC$ if $\CN\cdot\CK\cdot\CP=\mathrm{mor}(\CC)$ and $(\CP,\CK)\bot(\CK,\CN)$, where the latter symbolism means that for every outer commutative diagram\
$$\xymatrix{\cdot\ar[rr]^u\ar[d]_p && \cdot\ar[d]^{k'}\\
\cdot\ar[d]_k\ar@{-->}[rru]_s && \cdot\ar[d]^n\\
\cdot\ar[rr]_v\ar@{-->}[rru]^t && \cdot\\
}$$
with $p\in\CP, k,k'\in\CK, n\in\CN$ we have uniquely determined morphisms $s,t$ rendering the entire diagram commutative.
\end{definition}

\begin{remarks}\label{odfsremarks}
(1) For every o.f.s. $(\CE,\CM)$ in $\CC$ one has the trivial o.t.f.s. $(\CE,\mathrm{Iso}(\CC),\CM)$. 

(2) More generally, for every bifibration $P:\CC\to\mathbb B$ and every o.f.s. $(\CE,\CM)$ of $\mathbb B$ one has (in self-explanatory notation) the o.t.f.s. $(\mathrm{Cocart}(P)\cap P^{-1}\CE, P^{-1} \mathrm{Iso}(\mathbb B), \mathrm{Cart}(P)\cap P^{-1}\CM)$. For example, $\TOP$ inherits from the (epi,mono)-factorizations in $\SET$ the o.t.f.s. (quotient maps, continuous bijections, embeddings).

(3) For every o.t.f.s. $(\CP,\CK,\CN)$ of $\CC$ one has the o.f.s. $(\CP,\CF)$ with $\CF:=\CN\cdot\CK$ and the o.f.s. $(\mathcal C,\CN)$ with $\mathcal C:=\CK\cdot\CP$, which are comparable by inclusion: $\CP\subseteq\mathcal C$ and (equivalently) $\CN\subseteq\CF$. As part of an  o.f.s., the class $\CN$ is closed under composition and has the closure properties listed in Remark \ref{firstremark}, and $\CP$ enjoys the dual properties.

(4) For every pair $(\CP,\CF),\;(\mathcal C,\CN)$ of o.f.s. in $\CC$ with $\CP\subseteq\mathcal C$ and $\CN\subseteq\CF$ one obtains the o.t.f.s. $(\CP,\CK,\CN)$ with $\CK:=\mathcal C\cap\CF=\;^{\bot}\CN\cap\,\CP^{\bot}$. We note that the class $\CK$ is closed under composition.

(5) By Theorem 2.7 of \cite{PultrTholen02}, with the assignments described as in (3) and (4), one obtains for any category $\CC$ a bijective correspondence between pairs of comparable orthogonal factorization systems and orthogonal threefold factorization systems.
\end{remarks}

From Theorem \ref{perfecttheorem} we deduce:

\begin{corollary}\label{comparisonmorphisms}
In the finitely complete and finitely cocomplete category $\CC$ one has the o.t.f.s $(\mathsf{NEpi}(\CC),\CK,\mathsf{NMono}(\CC))$ with $\CK=\,^{\bot}\mathsf{NMono}(\CC)\,\cap\,\mathsf{NEpi}(\CC)^{\bot}$ if, and only if, normal decompositions in $\CC$ are perfect.
\end{corollary}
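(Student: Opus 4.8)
The plan is to read the Corollary as a direct application of the bijection between comparable pairs of orthogonal factorization systems and orthogonal threefold factorization systems recorded in Remark \ref{odfsremarks}(3)--(5), with \emph{perfectness} serving as the precise hinge between the two sides. I would prove the two implications separately, in each case translating closure of both $\CP=\mathsf{NEpi}(\CC)$ and $\CN=\mathsf{NMono}(\CC)$ under composition into the existence of the two comparable systems $(\mathcal{C},\CN)$ and $(\CP,\CF)$, and back.

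For the backward implication (perfect $\Rightarrow$ o.t.f.s.), I would begin with Theorem \ref{firsttheorem}, which already supplies reflectivity of $\CN$ and coreflectivity of $\CP$. Under the perfectness hypothesis both classes are in addition closed under composition, so Proposition \ref{ofschar} and its dual promote them to genuine orthogonal factorization systems $(\mathcal{C},\CN)$ with $\mathcal{C}={}^{\bot}\CN$ and $(\CP,\CF)$ with $\CF=\CP^{\bot}$. To invoke Remark \ref{odfsremarks}(4) I then need the comparability inclusions $\CP\subseteq\mathcal{C}$ and $\CN\subseteq\CF$; reading off the descriptions of $\mathcal{C}$ and $\CF$, both collapse to the single orthogonality statement $\CP\bot\CN$ --- the same orthogonality already invoked (as `trivial') in the proof of Theorem \ref{perfecttheorem}. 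Granting that, Remark \ref{odfsremarks}(4) produces exactly the asserted o.t.f.s. $(\CP,\CK,\CN)$ with middle class $\CK=\mathcal{C}\cap\CF={}^{\bot}\CN\cap\CP^{\bot}$.

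The forward implication is the purely formal half: if $(\CP,\CK,\CN)$ is an o.t.f.s., then Remark \ref{odfsremarks}(3) exhibits $\CP$ and $\CN$ as constituent classes of the associated systems $(\CP,\CN\cdot\CK)$ and $(\CK\cdot\CP,\CN)$, and membership in an o.f.s. forces closure under composition; hence the normal decompositions are perfect.

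The only non-formal point, and the step I expect to be the crux, is the orthogonality $\CP\bot\CN$ underlying comparability. I would establish it independently of perfectness, using the inclusions $\mathsf{NEpi}(\CC)\subseteq\mathsf{REpi}(\CC)$ and $\mathsf{NMono}(\CC)\subseteq\mathsf{RMono}(\CC)$ noted right after the definition of normal morphisms, together with the standard fact that a regular epimorphism is orthogonal to every monomorphism. The latter is a short diagram chase: the monomorphism cancels so that the coequalizer's universal property produces the diagonal, while the underlying epimorphism forces the remaining triangle and its uniqueness. Since every normal monomorphism is in particular monic, this yields $p\bot n$ for all $p\in\CP$ and $n\in\CN$, that is $\CP\bot\CN$, which is exactly what the backward implication was missing.
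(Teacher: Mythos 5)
Your proposal is correct and follows essentially the same route as the paper: the Corollary is deduced from Theorem \ref{perfecttheorem} (i.e.\ from Proposition \ref{ofschar} and Theorem \ref{firsttheorem}) together with the correspondence of Remarks \ref{odfsremarks}(3)--(5) between comparable pairs of orthogonal factorization systems and orthogonal threefold factorization systems. The point you single out as the crux --- the orthogonality $\CP\bot\CN$ yielding the comparability inclusions --- is precisely what the paper's proof of Theorem \ref{perfecttheorem} treats as ``trivially $\pi_f\in{}^{\bot}\CN$'', and your regular-epi-versus-monomorphism diagram chase is the correct justification of that step.
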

Expanding on Remarks \ref{odfsremarks}(3), (4) we state the following Proposition, which rephrases Remarks 3.11(2) of \cite{PultrTholen02}:

\begin{proposition}
{\em(1)} Given an o.t.f.s $(\CP,\CK,\CN)$ in a category $\CC$, setting
$$\mathcal C:=\CK\cdot\CP,\quad\CW:=\CN\cdot\CP,\quad\CF:=\CN\cdot\CK$$
one obtains orthogonal factorization systems
$$ (\mathcal C, \CF\cap\CW),\quad (\mathcal C\cap\CW,\CF)\quad\text{with}\quad\CW=(\CF\cap\CW)\cdot(\mathcal C\cap\CW)\;.$$
{\em (2)} Conversely, given any triple $(\mathcal C,\CW,\CF)$ of replete classes in $\CC$ satisfying the properties above, one obtains the orthogonal threefold factorization system
$$ (\mathcal C\cap\CW,\,\mathcal C\cap\CF,\,\CF\cap\CW)\;.$$
{\em(3)} The assignments described in {\em (1)} and {\em (2)} are inverse to each other.
\end{proposition}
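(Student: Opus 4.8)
The crux of this Proposition is the pair of intersection identities
$$\mathcal C\cap\CW=\CP\qquad\text{and}\qquad \CF\cap\CW=\CN,$$
valid for the classes attached to an o.t.f.s. $(\CP,\CK,\CN)$ as in~(1). Together with the identity $\mathcal C\cap\CF=\CK$, which is already recorded in Remarks~\ref{odfsremarks}(3),(4), these formulas turn both~(1) and~(3) into bookkeeping, so the plan is to establish them first, then read off~(1), obtain~(2) directly from Remarks~\ref{odfsremarks}(4), and finally trace the two round trips for~(3).

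To prove $\CF\cap\CW=\CN$, the inclusion $\CN\subseteq\CF\cap\CW$ is immediate: $\CN\subseteq\CF$ by Remarks~\ref{odfsremarks}(3), and $\CN=\CN\cdot\mathrm{Iso}(\CC)\subseteq\CN\cdot\CP=\CW$ by repleteness. For the reverse inclusion I would take $w\in\CF\cap\CW$ and write $w=n\cdot p$ with $n\in\CN\subseteq\CF$ and $p\in\CP$; since $w=n\cdot p\in\CF$ and $n\in\CF$, the weak left cancellation property of the right class $\CF$ (Remark~\ref{firstremark}) gives $p\in\CF$. But $p\in\CP$ and $\CP\cap\CF=\mathrm{Iso}(\CC)$ because $(\CP,\CF)$ is an o.f.s., so $p$ is an isomorphism and hence $w=n\cdot p\in\CN$ by repleteness. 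The identity $\mathcal C\cap\CW=\CP$ follows by the dual argument: writing $w=n\cdot p$ with $p\in\CP\subseteq\mathcal C$ and applying the dual weak (right) cancellation property to the left class $\mathcal C$ forces $n\in\mathcal C\cap\CN=\mathrm{Iso}(\CC)$, whence $w\in\CP$. With these identities, part~(1) is immediate: $(\mathcal C,\CF\cap\CW)=(\mathcal C,\CN)$ and $(\mathcal C\cap\CW,\CF)=(\CP,\CF)$ are precisely the two orthogonal factorization systems of Remarks~\ref{odfsremarks}(3), and $\CW=(\CF\cap\CW)\cdot(\mathcal C\cap\CW)=\CN\cdot\CP$ holds by the very definition of $\CW$.

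For part~(2) I would not reprove orthogonality from scratch but invoke Remarks~\ref{odfsremarks}(4). The first two hypotheses single out the orthogonal factorization systems $(\mathcal C\cap\CW,\CF)$ and $(\mathcal C,\CF\cap\CW)$, which are comparable since $\mathcal C\cap\CW\subseteq\mathcal C$ and $\CF\cap\CW\subseteq\CF$. Feeding this comparable pair into Remarks~\ref{odfsremarks}(4) produces the o.t.f.s. with outer classes $\mathcal C\cap\CW$ and $\CF\cap\CW$ and middle class $\mathcal C\cap\CF$, which is exactly the triple claimed. The third hypothesis is not used here; it enters only in the reverse round trip of~(3).

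Part~(3) is then a check of the two composites. Starting from an o.t.f.s. $(\CP,\CK,\CN)$, applying~(1) and then~(2) returns $(\mathcal C\cap\CW,\mathcal C\cap\CF,\CF\cap\CW)$, which equals $(\CP,\CK,\CN)$ by the three identities already assembled. Starting from a triple $(\mathcal C,\CW,\CF)$, applying~(2) and then~(1) returns $(\CK'\cdot\CP',\,\CN'\cdot\CP',\,\CN'\cdot\CK')$ with $\CP'=\mathcal C\cap\CW$, $\CK'=\mathcal C\cap\CF$, $\CN'=\CF\cap\CW$; the middle class $\CN'\cdot\CP'=(\CF\cap\CW)\cdot(\mathcal C\cap\CW)$ equals $\CW$ precisely by the third hypothesis, while $\CK'\cdot\CP'=\mathcal C$ and $\CN'\cdot\CK'=\CF$ follow from Remarks~\ref{odfsremarks}(3) applied to the o.t.f.s. just built, combined with the fact that in an orthogonal factorization system either class determines the other (so the left class $\CK'\cdot\CP'$ of the o.f.s. with right class $\CN'=\CF\cap\CW$ must coincide with $\mathcal C=\,^{\bot}(\CF\cap\CW)$, and dually $\CN'\cdot\CK'=(\mathcal C\cap\CW)^{\bot}=\CF$). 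The only step demanding genuine care, and the sole place where the cancellation properties of Remark~\ref{firstremark} are used, is the pair of intersection identities of the second paragraph; everything else is an orchestration of Remarks~\ref{odfsremarks}.
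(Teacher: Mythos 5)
Your proposal is correct and takes essentially the same approach as the paper: the paper's own proof of (1) is precisely your cancellation argument, showing $\mathcal C\cap\CW=\CP$ via the weak right cancellation property of the left class $\mathcal C$ of the o.f.s. $(\mathcal C,\CN)$ (with $\CF\cap\CW=\CN$ obtained dually), after which it dismisses (2) and (3) with ``one proceeds similarly''. Your explicit completions --- invoking Remarks~\ref{odfsremarks}(4) for (2), and using for (3) that either class of an o.f.s. determines the other together with the identity $\CW=(\CF\cap\CW)\cdot(\mathcal C\cap\CW)$ --- are correct renderings of exactly what the paper leaves implicit.
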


\begin{proof}
(1) As noted in Remarks \ref{odfsremarks}(3), we have the o.f.s. $(\CP,\CF)$ and should now show $\CP=\mathcal C\cap\CW$. Indeed, ``$\subseteq$'' is trivial since all of our classes contain the isomorphisms. For the reverse inclusion, consider $f=n\cdot p\in\mathcal C$ with $n\in \CN, p\in \CP$. Since $\CP\subseteq\mathcal C$, the weak right cancellation property for $\mathcal C$ (being part of the o.f.s. $(\mathcal C, \CN))$ gives $n\in\mathcal C\cap\CN$, so that $n$ is iso and, hence, $f\in\CP$. This gives the o.f.s. $(\mathcal C\cap\CW,\CF)$ and, dually, the o.f.s. $(\mathcal C,\mathcal F\cap\CW)$. The stated formula for $\CW$ is now an immediate consequence of its definition.
For the proofs of (2) and (3) one proceeds similarly.
\end{proof}
In a nutshell, we may treat a triple $(\mathcal C, \CW,\CF)$ as describing an o.t.f.s. in a kind of inflated fashion. The choice of notation suggests to think of ``cofibrations'', ``weak equivalences'', and of ``fibrations'', and this raises the question: when does $(\mathcal C, \CW,\CF)$ provide us with a {\em Quillen model structure} \cite{Quillen67} on $\CC$? That is: when are $ (\mathcal C, \CF\cap\CW)$ and $(\mathcal C\cap\CW,\CF)$ {\em weak (orthogonal) factorization systems} (see \cite{AHRT02a, AHRT02b}), with $\CW$ satisfying the so-called {\em 2-out-of-3 property} (or better the {\em 3-for-2 property}, so that whenever two of the morphisms in $f=g\cdot h$ are in $\CW$, also the third is)? A {\em weak factorization system} may be defined like an o.f.s., except that the uniqueness part of the diagonalization/lifting property gets dropped everywhere. An o.f.s is automatically also a weak factorization system (a statement that needs proof, see 14.6(3) in \cite{AHS90}). So, the question remains, when does the class $\CW=\CN\cdot\CP$ induced by an o.t.f.s $(\CP,\CK,\CN)$ satisfy the 2-for-3 property? We record here the answer given in \cite{PultrTholen02}, Theorem 3.10, omitting its straightforward proof, as follows:

\begin{proposition}\label{2-for-3}
The triple $(\mathcal C, \CW,\CF)$ corresponding to an o.t.f.s. $(\CP,\CK,\CN)$ in a category $\CC$ gives a Quillen model structure on $\CC$ if, and only if,

{\em 1.} $\CP\cdot\CN\subseteq\CN\cdot\CP$;\; {\em 2.} $(p\cdot k\in \CP,\,p\in\CP, k\in\CK\Rightarrow k \text{ iso })$;\; {\em 3.} $(k\cdot n\in\CN,\, n\in \CN, k\in\CK\Rightarrow k \text{ iso })$.
\end{proposition}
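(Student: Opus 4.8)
The plan is to collapse the whole statement onto the single outstanding model axiom: the 2-for-3 property of $\CW$. As the text preceding the proposition already observes, an o.f.s.\ is in particular a weak factorization system (see 14.6(3) in \cite{AHS90}), and by the preceding Proposition $(\mathcal{C},\CF\cap\CW)$ and $(\mathcal{C}\cap\CW,\CF)$ are o.f.s.; comparing these with the o.f.s.\ $(\mathcal{C},\CN)$ and $(\CP,\CF)$ of Remarks \ref{odfsremarks}(3) (each o.f.s.\ being determined by a single class) identifies $\CF\cap\CW=\CN$ and $\mathcal{C}\cap\CW=\CP$. So the two weak factorization systems demanded of a model structure are present, with trivial fibrations $\CN$ and trivial cofibrations $\CP$, and the only remaining axiom is that $\CW=\CN\cdot\CP$ satisfy 2-for-3 (retract-closure of $\CW$ then being automatic in the presence of the two weak factorization systems). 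Thus I would reduce the proposition to: $\CW$ has 2-for-3 if, and only if, conditions 1.--3.\ hold.

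The one auxiliary fact I would isolate first is $\CK\cap\CW=\mathrm{Iso}(\CC)$: if $k\in\CK$ also lies in $\CN\cdot\CP$, say $k=n\cdot p$, then $k$ carries the threefold factorization $n\cdot 1\cdot p$, whereas $k\in\CK$ carries $1\cdot k\cdot 1$, so uniqueness of threefold factorizations forces $k$ to be an isomorphism. With this the three clauses of 2-for-3 match the three conditions one-to-one. For the composition clause I would write $(\CN\cdot\CP)\cdot(\CN\cdot\CP)=\CN\cdot(\CP\cdot\CN)\cdot\CP$ and rewrite the middle block using condition 1; since $\CN$ and $\CP$ are each closed under composition (as halves of o.f.s.), the outer blocks are absorbed and $\CW\cdot\CW\subseteq\CW$ follows. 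Conversely, applying composition-closure to $p\in\CP\subseteq\CW$ and $n\in\CN\subseteq\CW$ returns condition 1 verbatim.

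For the cancellation clauses the ``only if'' direction is then immediate: given $p\cdot k\in\CP$ with $p\in\CP,\,k\in\CK$, both $p\cdot k$ and $p$ lie in $\CW$, so left cancellation puts $k\in\CK\cap\CW=\mathrm{Iso}(\CC)$, which is condition 2; dually, right cancellation applied to $k\cdot n\in\CN$ gives condition 3. The substantive direction is ``if''. For right cancellation I would take $f=g\cdot h$ with $f,h\in\CW$, take the threefold factorization $g=n_g\cdot k_g\cdot p_g$, and aim to show $k_g$ is an isomorphism (so $g=(n_g\cdot k_g)\cdot p_g\in\CN\cdot\CP=\CW$). Writing $h=n_h\cdot p_h$ with comparison iso since $h\in\CW$, I would use condition 1 to rewrite $p_g\cdot n_h\in\CP\cdot\CN$ as $\bar n\cdot\bar p$, and then use that $\CF=\CN\cdot\CK$ is closed under composition to repackage $k_g\cdot\bar n\in\CF$ as $n_1\cdot k_1$. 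This exhibits a bona fide threefold factorization $f=(n_g\cdot n_1)\cdot k_1\cdot(\bar p\cdot p_h)$; as $f\in\CW$, uniqueness forces $k_1$ iso, whence $k_g\cdot\bar n=n_1\cdot k_1\in\CN$, and condition 3 finally forces $k_g$ iso. Left cancellation follows by passing to $\CC^{\mathrm{op}}$, which swaps $\CP$ with $\CN$ and interchanges conditions 2 and 3.

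I expect the genuine obstacle to be precisely this last ``if'' step: the care needed to align the threefold factorization of the composite $f$ with those of its factors, shuffling the mixed blocks $\CP\cdot\CN$ (straightened by condition 1) and $\CK\cdot\CN$ (absorbed into $\CF$) until a legitimate threefold factorization of $f$ appears, on which uniqueness and then condition 3 (resp.\ condition 2) can act. Everything else—the reduction to 2-for-3, the identification of the trivial (co)fibrations, and the composition clause—is essentially bookkeeping.
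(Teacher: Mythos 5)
Your proposal is correct and proceeds exactly along the route the paper indicates: the paper itself omits the proof as straightforward (citing \cite{PultrTholen02}, Theorem 3.10), noting only that the statement reduces to the 2-for-3 property of $\CW=\CN\cdot\CP$, with Condition 1 accounting for closure under composition and Conditions 2 and 3 for the two cancellation clauses---which is precisely the structure of your argument. Your supporting details (the identifications $\CF\cap\CW=\CN$ and $\mathcal{C}\cap\CW=\CP$, the fact that $\CK\cap\CW=\mathrm{Iso}(\CC)$ via uniqueness of threefold factorizations, and the block-shuffling in the substantive cancellation step using Conditions 1 and 3, with its dual) are all sound.
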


Here Condition 1 guarantees closure under composition of $\CW$ while Conditions 2 an 3 ensure that $\CW$ satisfies the other two components of the 2-for-3 property  . These conditions show that it is rare for perfect normal decompositions to induce a Quillen model structure. In fact, revisiting some of the Examples \ref{firstexamplesagain} we note:

\begin{examples}
The normal decomposition in $\mathsf{Ab}$ induces the (trivial) Quillen model structure (surjective, all, injective). In $\mathsf{CRing}_1$ the induced triple ($\mathcal{C},\CW,\CF)$ satisfies Conditions 1 and 3, but not 2, while in $\SET$ it satisfies 1 and 2, but not 3. In $\TOP$, only Condition 1 holds, and in $\mathsf{TopAb}$ all three conditions fail.
\end{examples}

\section{Normal decomposition in slice and coslice categories}

We fix an object $C$ in our category $\ncatC$ and continue to {\em let $\CC$ be be finitely complete and finitely cocomplete}, in order to  investigate first the normal closure and its dual in the slice category $\ncatC/C$ (whose objects are $\ncatC$-morphisms $p$ with codomain $C$, and whose morphisms $f:q\to p$ are given by $\ncatC$-morphisms $f:A\to B$ with $p\cdot f=q$. Recall that
\begin{itemize}
\item the identity morphism $1_C:C\to C$ of $\CC$ assumes the role of the terminal object in $\ncatC/C$;
\item $\ncatC/C$ is finitely complete, with pullbacks formed as in $\ncatC$;
\item $\ncatC/C$ is finitely cocomplete, with all colimits formed as in $\ncatC$; in particular, the initial object is given by the morphism $0\to C$, with $0$ initial in $\CC$.
\end{itemize}

Since the initial object in $\CC/C$ may be formed with the help of the initial object in $\CC$, it is easy to see that the normal {\em dual} closure of a morphism $f:q\to p$ in $\CC/C$ with $f:A\to B$ in $\CC$ may (in the notation of Section 3) be given by the normal dual closure $\pi_f:A\to P_f$ of $f$ in $\CC$, to be considered as a morphism $\pi_f:p\cdot f\to p\cdot \check{f}$ in $\CC/C$; that is: one may consider the defining diagram for the formation $\pi_f$ in $\CC$ as a diagram over $C$, without changing the role of $0$ and of the pullback and the pushout involved. To briefly indicate this fact, we use the symbolism
$$\pi_{f/C}\cong\pi_f\,.$$

In consideration of the special nature of its terminal object, an analogous formula for the normal closure in $\CC/C$ can {\em not} be expected to hold.

Indeed, in order to form the normal closure of the morphism $f:q\to p$  in $\ncatC/C$ with $f:A\to B$ and $p:B\to C$ in $\ncatC$, we must first form the pushout of $f:p\cdot f\to p$ along the unique arrow $p\cdot f\to 1_C$ in $\ncatC/C$,  given by $p\cdot f$ in $\ncatC$. This then amounts to simply forming the pushout $j_f$  of $f$ along $p\cdot f$ in $\ncatC$, which we may do in two stages, first pushing out $f$ along itself to obtain $f_2$, and then pushing $f_2$ further along $p$ to finally obtain $j_f$:
$$\xymatrix{A\ar[r]^f\ar[d]_f & B\ar[rr]^p\ar[d]_{f_2}\ar@/^2.5pc/|(.5)\hole[dd]^(0.75){1_B} && C\ar[d]_{j_f}\ar@/^2.5pc/[dd]^{1_C}\\
B\ar[r]^{f_1\quad}\ar@/_0.9pc/[rd]_{1_B} & B\!+\!_A\,B\ar[rr]^(.56){1_B+\!_A\,p}\ar[d]_{\nabla} && B\!+\!_A \,C\ar[d]_r\\
&B\ar[rr]_p && C \\ 
}$$ 
Here all rectangles of the above diagram are pushout diagrams in $\mathbb C$, so that $(f_1,f_2)$ is the cokernel pair of $f$. The morphisms $f_1, f_2$ and $j_f$ are all split monomorphisms, even in $\ncatC/B$, with retractions $\nabla=\langle 1_B,1_B\rangle$ and $r=\langle p,1_C\rangle$, respectively. We note that the rectangle on the left is also a pullback diagram if, and only if, $f$ is a regular monomorphism (in $\CC$ or, equivalently, in $\CC/C$).

The following statements are now evident:
\begin{proposition}\label{discreteincomma}
Let $f:q=p\cdot f\to p$ be a morphism in $\ncatC/C$ where $f:A\to B$ in $\CC$.

{\em (1)} The normal closure $\nu_{f/C}$ of $f$ in $\ncatC/C$ is obtained by the pullback of $j_f$ along $\widetilde{p}\cdot f_1$ in $\ncatC$, with $\widetilde{p}:=1_B+_Ap$; we write $N_{f/C}\to B$ for its underlying morphism in $\CC$. If $f$ is a regular monomorphism and the right upper or lower rectangle of the diagram above is a pullback, then $\nu_{f/C}\cong f$.

{\em (2)} There is a unique regular monomorphism $\tau:N_{f/C}\to N_f$ in $\CC$ with $\nu_f\cdot \tau=\nu_{f/C}$ which compares the normal closure of $f$ taken in $\CC/C$ with its normal closure taken in $\CC$. Hence, $\nu_{f/C}\cong\nu_f$ in $ \CC/B$ if, and only if,
$\tau$ is an epimorphism, and this holds whenever
(not only the left but also) the right square of the following diagram is a pullback:
$$\xymatrix{N_{f/C}\ar[rr]\ar[dd]_{\nu_{f/C}} \ar[rd]^{\tau}&& C\ar[rr]\ar[dd]^(0.7){j_f} &&1\ar[dd]\\
& N_f\ar[rrur]|(.37)\hole\ar[ld]_{\nu_f} &&& \\
B\ar[rr]^{\widetilde{p}\cdot f_1} && B+_AC\ar[rr]^{1_B+!_C} && B+_A1\\
}$$
\end{proposition}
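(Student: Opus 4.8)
The plan is to read part~(1) as a direct transcription of the Section~3 construction into $\CC/C$, reserving the genuine work for the comparison morphism $\tau$ of part~(2). For part~(1): by definition the normal closure in $\CC/C$ is obtained by pushing $f\colon q\to p$ out along the unique morphism $q\to 1_C$ and then pulling the resulting injection of $1_C$ back along the injection of $p$. As colimits in $\CC/C$ are formed in $\CC$ and $1_C$ is the object $C$, that pushout is exactly the pushout $j_f\colon C\to B+_AC$ of $f$ along $p\cdot f$ displayed above, and a check of universal properties identifies the injection of $p$ with the canonical map $\widetilde{p}\cdot f_1\colon B\to B+_AC$. Hence $\nu_{f/C}$ is the pullback of $j_f$ along $\widetilde{p}\cdot f_1$, as asserted. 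For the second claim, $\nu_{f/C}\cong f$ precisely when the square with corners $A,C,B,B+_AC$ (top $p\cdot f$, sides $f$ and $j_f$, bottom $\widetilde{p}\cdot f_1$) is a pullback; this square is the horizontal composite of the cokernel-pair square with the upper-right square of the two-stage diagram. When $f$ is a regular monomorphism the cokernel-pair square is a pullback (as noted after that diagram), so by the pasting lemma it suffices that the upper-right square be a pullback. Pasting the upper-right square on top of the lower-right one produces, via the retractions $\nabla\cdot f_2=1_B$ and $r\cdot j_f=1_C$, a square with identity vertical edges, which is trivially a pullback; hence the lower-right square being a pullback forces the upper-right one to be a pullback as well, which accounts for the ``upper or lower'' alternative.

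For part~(2), I would first produce $\tau$ and then, as the decisive step, show it is regular. The functorial morphism $1_B+!_C\colon B+_AC\to B+_A1$ carries the cospan defining $N_{f/C}$ to the cospan defining $N_f$, sending $\widetilde{p}\cdot f_1$ to the injection $B\to B+_A1$ and $j_f$ to the composite $C\to 1\to B+_A1$; the universal property of the pullback $N_f$ then yields a morphism $\tau$ with $\nu_f\cdot\tau=\nu_{f/C}$, unique over $B$ since $\nu_f$ is monic. The step I expect to be the main obstacle is upgrading this (manifestly monic) $\tau$ to a \emph{regular} monomorphism, and my plan is to exhibit it as an equalizer outright. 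Using the equalizer presentations $\nu_{f/C}=\operatorname{Eq}(\widetilde{p}\cdot f_1,\;j_f\cdot p)$, and $\nu_f$ the equalizer of the injection $B\to B+_A1$ with the composite $B\to 1\to B+_A1$, I claim $\tau$ is the equalizer of
$$\widetilde{p}\cdot f_1\cdot\nu_f,\qquad j_f\cdot p\cdot\nu_f\ \colon\ N_f\rightrightarrows B+_AC.$$
Indeed, for any $x\colon X\to N_f$ equalizing this pair, the composite $\nu_f\cdot x$ equalizes $\widetilde{p}\cdot f_1$ and $j_f\cdot p$, hence factors uniquely through $\nu_{f/C}=\nu_f\cdot\tau$; cancelling the monomorphism $\nu_f$ gives the required unique factorization of $x$ through $\tau$. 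This identifies $\tau$ with an equalizer, so it is a regular monomorphism.

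The remaining assertions are then formal. Since $\tau$ is the unique morphism over $B$ with $\nu_f\cdot\tau=\nu_{f/C}$, any isomorphism $\nu_{f/C}\cong\nu_f$ in $\CC/B$ must coincide with $\tau$, so such an isomorphism exists if and only if $\tau$ is invertible; and because a regular monomorphism that is also epic is invertible, $\tau$ is an isomorphism exactly when it is an epimorphism, which is the stated equivalence. Finally, if the right-hand square is a pullback then, its left neighbour being the pullback that defines $N_{f/C}$, the pasting lemma makes the outer rectangle a pullback; but that rectangle is a cone over the very cospan whose pullback is $N_f$, so $\tau$ is forced to be an isomorphism, and in particular epic.
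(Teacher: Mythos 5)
Your proof is correct, and it is essentially the argument the paper intends: the paper in fact offers no proof at all (the proposition is introduced with ``The following statements are now evident''), and your write-up is exactly the natural verification from the two-stage pushout diagram, using the pasting and cancellation lemmas for pullbacks together with the equalizer description of normal closures recalled in Section 3. In particular, your equalizer presentation of $\tau$ as the equalizer of $\widetilde{p}\cdot f_1\cdot\nu_f$ and $j_f\cdot p\cdot\nu_f$ --- the one step the paper leaves entirely implicit when it asserts that $\tau$ is a \emph{regular} monomorphism --- is a clean and correct way to supply that missing detail.
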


We can now compare the normal decompositions in $\CC/C$ and in $\CC$, as follows:

\begin{corollary}\label{normdecincomma}
For a morphism $f$ as in the Proposition one has the commutative diagram 
$$\xymatrix{& P_{f}\ar[rr]^{\kappa_f}\ar[dd]_(0.65){\cong}|(.5)\hole && N_f \ar[rd]^{\nu_f}& \\
A\ar[ru]^{\pi_{f}}\ar[rd]_{\pi_{f/C}}\ar[rrrr]^f\ar@/_2.9pc/[rrdd]_q&&&& B\ar@/^2.9pc/[lldd]^p\\
& P_{f/C}\ar[rr]_{\kappa_{f/C}} && N_{f/C}\ar[ru]_{\nu_{f/C}}\ar[uu]_(0.65){\tau}|(.5)\hole &\\
&& C &&\\
}$$
\end{corollary}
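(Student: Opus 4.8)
The plan is to assemble the diagram from pieces that have already been constructed, rather than prove anything new. The inner hexagon with vertices $P_f, N_f, A, B, P_{f/C}, N_{f/C}$ is exactly the normal decomposition diagram of Section~4 applied once in $\CC$ (top row) and once in $\CC/C$ (bottom row), so both the factorizations $f=\nu_f\cdot\kappa_f\cdot\pi_f$ and $f=\nu_{f/C}\cdot\kappa_{f/C}\cdot\pi_{f/C}$ are already available by definition of the normal decomposition. The two outer triangles $A\to B\to C$ with legs $q$ and $p$ simply record that $f:q\to p$ is a morphism over $C$, i.e.\ that $p\cdot f=q$; these commute by the very definition of a morphism in $\CC/C$. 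So the content to be verified is entirely concentrated in the two \emph{vertical} comparison morphisms and the squares they close.

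First I would invoke the isomorphism $P_f\cong P_{f/C}$: this is precisely the relation $\pi_{f/C}\cong\pi_f$ established in the paragraph preceding Proposition~\ref{discreteincomma}, where it was shown that the normal dual closure may be computed identically in $\CC$ and in $\CC/C$ because the initial object of $\CC/C$ is carried by the initial object of $\CC$. Under this identification the left-hand vertical arrow is an isomorphism, and the triangle $A\to P_f\cong P_{f/C}$ commutes because $\pi_f$ and $\pi_{f/C}$ are identified by the same isomorphism. For the right-hand vertical arrow I would take $\tau:N_{f/C}\to N_f$ directly from Proposition~\ref{discreteincomma}(2), which supplies it as the unique regular monomorphism satisfying $\nu_f\cdot\tau=\nu_{f/C}$. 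This single identity already makes the lower-right triangle $N_{f/C}\to N_f\to B$ commute and simultaneously expresses compatibility of the two normal closures with the codomain $B$.

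What then remains is to check that the central square, relating $\kappa_f$ and $\kappa_{f/C}$ across the two vertical comparison arrows, commutes; that is, that $\tau\cdot\kappa_{f/C}$ agrees with $\kappa_f$ transported along $P_{f/C}\cong P_f$. The clean way to see this is by a cancellation argument: both composites, when postcomposed with $\nu_f$ and precomposed with $\pi_f$ (equivalently $\pi_{f/C}$), recover the same morphism $f$, since each of the two horizontal rows is a normal decomposition of $f$. Using that $\nu_f$ is monic and $\pi_f$ (hence $\pi_{f/C}$) is epic, the square commutes by the uniqueness clause of the comparison morphism as the diagonal of the square displayed just before the Definition in Section~4. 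I expect this mediating-square commutativity to be the only genuinely non-formal step, and even it reduces to the monic/epic cancellation already used repeatedly in Section~4.

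The main obstacle, to the extent there is one, is purely bookkeeping: ensuring that the isomorphism $P_f\cong P_{f/C}$ is inserted with the correct orientation so that the two triangles meeting at $A$ and the central square all commute simultaneously, and that $\tau$ from Proposition~\ref{discreteincomma}(2) is used in the direction $N_{f/C}\to N_f$ consistent with $\nu_f\cdot\tau=\nu_{f/C}$. Once these orientations are fixed, every face of the diagram commutes either by a definition already recorded (the two outer $C$-triangles, the $\pi$-identification, the defining identity for $\tau$) or by the monic/epic cancellation giving the central square, and the Corollary follows without further computation.
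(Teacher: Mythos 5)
Your proposal is correct and takes essentially the same route as the paper, which states this Corollary without a separate proof precisely because it is the evident assembly of the two normal decompositions, the identification $\pi_{f/C}\cong\pi_f$ from the discussion preceding Proposition \ref{discreteincomma}, and the comparison morphism $\tau$ with $\nu_f\cdot\tau=\nu_{f/C}$ from Proposition \ref{discreteincomma}(2). Your monic/epic cancellation argument for the central square (postcompose with the monomorphism $\nu_f$, precompose with the epimorphism $\pi_f$, and compare both sides with $f$) is exactly the intended justification and is valid.
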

Here is an example showing that $N_{f/C}$ and $N_f$ may differ to the largest extent possible, {\em i.e.}, one may have  $N_{f/C}\cong A$ or $N_f\cong B$, as well as some examples of categories $\CC$ for which the normal closure of a regular subobject in any slice $\CC/C$ is {\em discrete}, so that $\nu_{f/C}\cong f$ for every regular monomorphism $f$ in $\CC/C$:
\begin{examples}\label{normalclosuresliceexas}
(1) Consider the inclusion map $f:\mathbb Q\hookrightarrow \mathbb R$ as a morphism in $\mathsf{Top}_1/\mathbb R$, with the real line $\mathbb R$ taken identically as a space over itself. Subspace embeddings are known to be regular monomorphisms in $\mathsf{Top}_1$ (see \cite{KMPT83}), so that the left upper rectangle in the diagram above Proposition \ref{discreteincomma} is a pullback. With the right side of the diagram being trivial, this gives $N_{f/C}=\mathbb Q$ (as also follows more generally from Corollary \ref{T1corollary} ), while $N_f=\overline{\mathbb Q}=\mathbb R$ by Examples \ref{firstexas}(6).

(2) It is easy to see that, in the category $\mathsf{Ab}/C$ with any abelian group $C$, the normal closure is discrete, like in $\mathsf{Ab}$ itself. Indeed, for $A\leq B$ and any homomorphism $p: B\to C$, the pushout of the inclusion morphism $A\hookrightarrow B$ along $q=p|_A$ is the obvious morphism $C\to (B\oplus C)/I$, where (with $B$ and $C$ embedded into $B\oplus C$)\, $I=\{a-p(a)\mid a\in A\}$, and its pullback along the other pushout map gives back $A\hookrightarrow B$. 

Consequently, for an arbitrary morphism $f:q\to p$ in $\mathsf{Ab}/C$ with $f:A\to B$ in $\mathsf{Ab}$ the normal closure of $f$ over $C$ is carried by $\mathrm{Im}(f)$. The normal decomposition of $f$ in $\mathsf{Ab}$ is therefore carried by its decomposition in $\mathsf{Ab}$.

(3) The argument given in (2) obviously works in any abelian category (and beyond), in particular in the category $K\text{-}\mathsf{Vec}$ of $K$-vector spaces (with a field $K$). Let us give an alternative argument in this category, as follows. The embedding $A\hookrightarrow B$ of a subspace $A$ into a space $B$ splits since $B= A\oplus A'$ for some subspace $A'$, and its pushout along any linear map $q:A\to C$ is then described by the diagram
$$\xymatrix{ A\ar[d]\ar[rr]^q && C\ar[d]\\
A\oplus A'\ar[rr]^{\widetilde{q}=q\oplus 1_{A^{\prime}}} &&   C\oplus A'\\
}$$
whose vertical arrows are coproduct injections. Clearly, this pushout diagram is also a pullback. Therefore, like in $K\text{-}\mathsf{Vec}$ itself, the normal closure is discrete also in every slice  $K\text{-}\mathsf{Vec}/C$.
\end{examples}

We note that the argument given in $K\text{-}\mathsf{Vec}$ applies to many other categories, provided that we restrict the split monomorphism in question to being a coproduct injection. For a more general context, let us call
a category $\mathbb D$ with binary coproducts {\em pre-extensive} if every diagram
$$\xymatrix{A\ar[d]\ar[rr]^q && C\ar[d]\\
A+A' \ar[rr]^{q+1_{A'}}&& C+A'\\
}$$ whose vertical arrows  are coproduct injections is a pullback diagram in $\mathbb D$.
Every extensive category \cite{CLW93} and every quasi-abelian category (see Corollary \ref{quasi-abelian}) is pre-extensive. In particular, the categories $\SET, \TOP$ and $\AB$ are pre-extensive. Amongst these three categories, only $\SET$ has the property that every (regular) monomorphism is a coproduct injection. An appropriate generalization of this crucial property is given by the notion of {\em Boolean} category \cite{CLW93}, as an extensive category with a terminal object $1$ such that the first coproduct injection $t:1\to 1+1$ serves as a (regular) subobject classifier. Therefore, every (regular) subobject is a pullback of $t$ along its characteristic morphism and, as a pullback of a coproduct injection, it becomes a coproduct injection itself, by extensionality. Consequently:

\begin{corollary}\label{Boolean}
If the category $\ncatC$ is Boolean, then the normal closure in $\ncatC/C$ is discrete.
\end{corollary}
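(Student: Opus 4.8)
The plan is to reduce the statement to the pre-extensive case, which was already set up in the discussion immediately preceding the corollary. First I would recall what ``discrete normal closure in $\ncatC/C$'' means: by Proposition \ref{discreteincomma}(1), it is precisely the assertion that for every regular monomorphism $f$ in $\ncatC/C$ one has $\nu_{f/C}\cong f$, which happens exactly when the left upper rectangle in the two-stage pushout diagram preceding Proposition \ref{discreteincomma} is a pullback. By Lemma \ref{triviallemma} and the remark that in the presence of (strong epi, mono)-factorizations it suffices to treat monomorphisms, I only need to handle regular monomorphisms, and in fact, since every object of $\ncatC/C$ is an object $p\colon B\to C$ of $\ncatC$ over $C$, I can work entirely with the underlying $\ncatC$-morphisms.

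Next I would isolate the single geometric fact that does all the work: in a pre-extensive category, if $f\colon A\hookrightarrow B$ is a coproduct injection, say $B\cong A+A'$, then pushing $f$ out along any morphism $q\colon A\to C$ produces a square that is \emph{also} a pullback. This is exactly the defining diagram of pre-extensivity, and it is precisely the ``left upper rectangle is a pullback'' condition needed in Proposition \ref{discreteincomma}(1) to conclude $\nu_{f/C}\cong f$. So the proof amounts to verifying that in a Boolean category every regular subobject of every object is (up to isomorphism) a coproduct injection, and then invoking pre-extensivity.

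I would carry this out as follows. A Boolean category is, by the cited definition, extensive with a terminal object $1$ such that the first coproduct injection $t\colon 1\to 1+1$ is a (regular) subobject classifier. Hence any regular subobject $m\colon S\rightarrowtail X$ arises as a pullback of $t$ along its characteristic morphism $\chi_m\colon X\to 1+1$. Now the key step is that in an extensive category a pullback of a coproduct injection along an arbitrary morphism is again a coproduct injection: this is the standard ``pullback-stability of coproduct injections'' built into extensivity. Since $t$ is a coproduct injection, its pullback $m$ is too, so $m$ exhibits $X$ as a coproduct $S + S'$ for some complementary subobject $S'$. Finally, every extensive category is pre-extensive (as already observed in the paragraph before the corollary), so the pushout of $m$ along any $q$ is a pullback, and Proposition \ref{discreteincomma}(1) gives $\nu_{m/C}\cong m$ for every regular monomorphism in $\ncatC/C$, which is the discreteness assertion.

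The main obstacle is the bookkeeping between two different notions of subobject: I must make sure the subobject classifier genuinely classifies the \emph{regular} monomorphisms I care about (the paper hedges with ``(regular)'' throughout), and that the complementary summand $S'$ really exists as a coproduct injection rather than merely as a subobject. The cleanest route is to lean directly on the cited characterization in \cite{CLW93}, where Boolean categories are set up so that regular subobjects are classified by $t$ and the pullback-stability of coproduct injections under extensivity is available; everything else is then a matter of transporting the one pre-extensivity square through Proposition \ref{discreteincomma}. I would therefore keep the argument short, citing \cite{CLW93} for the classification and stability facts and reducing the genuinely new content to the single sentence ``a regular subobject, being a pullback of the coproduct injection $t$, is itself a coproduct injection, whence its pushout along any morphism is a pullback.''
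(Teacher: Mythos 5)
Your proposal is correct and follows essentially the same route as the paper: in a Boolean category every regular subobject, being a pullback of the coproduct injection $t:1\to 1+1$ along its characteristic morphism, is itself a coproduct injection by extensivity, and pre-extensivity then makes its pushout along any morphism a pullback, which is exactly the discreteness of the normal closure in every slice. One small terminological slip: the hypothesis in Proposition \ref{discreteincomma}(1) is that the \emph{right} (upper or lower) rectangle be a pullback --- the left square is a pullback precisely because $f$ is a regular monomorphism --- but what your argument actually establishes, namely that the composite square (the pushout of $f$ along $q$) is a pullback, does directly yield $\nu_{f/C}\cong f$, so the substance is unaffected.
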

Of course, the Corollary applies in particular to $\CC=\SET$. Hence, like in $\mathsf{Ab}/C$, normal decompositions in $\SET/C$ are carried by the decomposition in the  underlying category, and they are perfect.

Let us now dualize the general statements for the slice $\CC/C$ and consider the coslice category
$$C/\CC=(\CC^{\op}/C)^{\op}$$
whose objects are $\CC$-morphisms with domain $C$, and whose morphisms $f:j\to k$ are given by $\CC$-morphisms $f:A\to B$ with $f\cdot j=k$. Denoting the normal closure of $f$ in $C/\CC$ by $\nu_{C/f}$ and its normal dual closure by $\pi_{C/f}$, we know by duality that only the latter closure is potentially interesting since the former is computed as in $\CC$:
$$\nu_{C/f}\cong \nu_f.$$
For the computation of $\pi_{C/f}$ we consider the diagram
$$\xymatrix{C\ar[rr]^j \ar[d]^s\ar@/_2.5pc/[dd]_{1_C}&& A\ar[d]^{\Delta}\ar@/_2.5pc/[dd]_(0.25){1_A}|(.5)\hole\ar@/^0.9pc/[rd]^{1_A} & \\
C\times_BA\ar[rr]^{j\times_B1_A\quad}\ar[d]^{p_f} && A\times_BA\ar[d]^{f_1}\ar[r]^{\quad f_2} &A\ar[d]^f \\
C\ar[rr]_j && A\ar[r]_f & B\\
}$$
and dualizes Proposition \ref{discreteincomma} to obtain:

\begin{corollary}
Let $f:j\to k=f\cdot j$ be a morphism in $C/\ncatC$ where $f:A\to B$ in $\CC$.

{\em (1)} The normal dual closure $\pi_{C/f}$ of $f$ in $C/\ncatC$ is obtained by the pushout of $p_f$ along $f_2\cdot\widetilde{j}$ in $\ncatC$, with $\widetilde{j}:=j\times_A1_A$; we write $A\to P_{C/f}$ for its underlying morphism in $\CC$.  If $f$ is a regular epimorphism and the left upper or lower rectangle of the diagram above is a pushout, then $\pi_{C/f}\cong f$.

{\em (2)} There is a unique regular epimorphism $\sigma:P_f\to P_{C/f}$ in $\CC$ with $\sigma\cdot\pi_f=\pi_{C/f}$ which compares the normal dual closure of $f$ taken in $C/\CC$ with its normal dual closure taken in $\CC$. Hence,
 $\pi_{f/C}\cong\pi_f$ in $A/ \CC$ if, and only if,
$\sigma$ is a monomorphism, and this holds whenever
(not only the right but also) the left square of the following diagram is a pushout:
$$\xymatrix{0\times_BA\ar[rr]^{!_C\times_B1_A}\ar[dd] && C\times_BA\ar[rr]^{f\cdot\widetilde{j}}\ar[dd]_(0.3){f'} && A\ar[dd]^{\pi_{f/C}}\ar[ld]_{\pi_f}\\
&&& P_f\ar[rd]^{\sigma} & \\
0\ar[rrru]|(.64)\hole\ar[rr] && C\ar[rr] && P_{f/C} \\
}$$
\end{corollary}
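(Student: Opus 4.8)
The plan is to obtain this Corollary purely by dualizing Proposition~\ref{discreteincomma}, which has already been established, together with Corollary~\ref{normdecincomma}. The entire section is organized around the duality $C/\CC = (\CC^{\op}/C)^{\op}$, so the guiding principle is that every statement about the normal closure $\nu_{(-)/C}$ in a slice becomes, under passage to the opposite category, a statement about the normal dual closure $\pi_{C/(-)}$ in a coslice. Concretely, a morphism $f\colon j\to k$ in $C/\CC$ is the same as a morphism $f^{\op}\colon k\to j$ in $\CC^{\op}/C$, pushouts in $\CC$ become pullbacks in $\CC^{\op}$, the cokernel pair $(f_1,f_2)$ of $f$ in $\CC$ becomes the kernel pair in $\CC^{\op}$, and the regular monomorphism $\tau$ produced by Proposition~\ref{discreteincomma}(2) becomes the regular epimorphism $\sigma$ asserted here. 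Thus the first step is simply to record that the displayed diagram in the statement is exactly the $\CC^{\op}$-image of the diagram appearing in Proposition~\ref{discreteincomma}, with $\widetilde{j}=j\times_A 1_A$ playing the dual role of $\widetilde{p}=1_B+_A p$, and $p_f$ the dual of $j_f$.

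Second, I would translate each clause of Proposition~\ref{discreteincomma} term by term. Part~(1) of that Proposition constructs $\nu_{f/C}$ as a pullback of $j_f$; dualizing, $\pi_{C/f}$ is the asserted pushout of $p_f$ along $f_2\cdot\widetilde{j}$, and the sufficient condition ``$f$ regular mono and one of the right rectangles is a pullback'' dualizes to ``$f$ regular epi and one of the left rectangles is a pushout,'' yielding $\pi_{C/f}\cong f$. Part~(2) gives the comparison regular monomorphism $\tau\colon N_{f/C}\to N_f$ with $\nu_f\cdot\tau=\nu_{f/C}$; its dual is precisely the comparison regular epimorphism $\sigma\colon P_f\to P_{C/f}$ with $\sigma\cdot\pi_f=\pi_{C/f}$. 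The criterion ``$\tau$ is an epimorphism iff $\nu_{f/C}\cong\nu_f$ in $\CC/B$, which holds when the right square is a pullback'' dualizes to ``$\sigma$ is a monomorphism iff $\pi_{C/f}\cong\pi_f$ in $A/\CC$, which holds when the left square of the displayed diagram is a pushout.'' The uniqueness of $\sigma$ follows from the uniqueness of $\tau$, and its regularity from the fact that regular monomorphisms in $\CC$ are regular epimorphisms in $\CC^{\op}$.

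The only genuine care needed is bookkeeping rather than mathematics: one must verify that the identifications $\nu_{C/f}\cong\nu_f$ and $\pi_{C/f}$ are being dualized consistently, since the normal \emph{dual} closure in $C/\CC$ corresponds to the normal closure in $\CC^{\op}/C$, and that the initial object $0$ of $\CC$ appearing in the dualized diagram is correctly the image of the terminal $1$ used in the slice construction. In particular the bottom-left square of the displayed diagram, built from $0\times_B A$ and the canonical map $!_C\times_B 1_A$, must be recognized as the $\CC^{\op}$-dual of the top-right square of Proposition~\ref{discreteincomma}(2), which involved $B+_A 1$ and $1_B+\,!_C$. I expect this diagram-chasing alignment of the two figures to be the main (and only) obstacle, precisely because the two diagrams are drawn with different orientations and the reader must see that one is obtained from the other by reversing every arrow; once that correspondence is fixed, no further argument is required beyond invoking Proposition~\ref{discreteincomma} in $\CC^{\op}$. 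I would therefore present the proof as a single short paragraph asserting that the statement is the dualization of Proposition~\ref{discreteincomma} under $C/\CC=(\CC^{\op}/C)^{\op}$, leaving the transcription of diagrams to the reader.
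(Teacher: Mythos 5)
Your proposal is correct and is exactly the paper's own route: the paper states this Corollary with no separate proof, introducing it with the words that one ``dualizes Proposition \ref{discreteincomma}'' under the identification $C/\CC=(\CC^{\op}/C)^{\op}$, which is precisely the term-by-term translation (pushouts for pullbacks, cokernel pairs for kernel pairs, $\sigma$ for $\tau$) that you spell out. Your bookkeeping remarks, including the observation that the normal \emph{dual} closure in $C/\CC$ is the normal closure in $\CC^{\op}/C$, match the paper's intent, so nothing further is needed.
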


The dualization of Corollary \ref{normdecincomma} compares the normal dual decomposition in $C/\CC$ with that in $\CC$:
\begin{corollary}\label{decompincoslice}
For a morphism $f$ as in the Corollary above one has the commutative diagram 
$$\xymatrix{&& C\ar@/_2.3pc/[lldd]_j\ar@/^2.3pc/[rrdd]^k && \\
& P_{f}\ar[rr]^{\kappa_f}\ar[dd]_(0.65){\sigma}|(.5)\hole && N_f \ar[rd]^{\nu_f}& \\
A\ar[ru]^{\pi_{f}}\ar[rd]_{\pi_{C/f}}\ar[rrrr]^f&&&& B\\
& P_{C/f}\ar[rr]_{\kappa_{C/f}} && N_{C/f}\ar[ru]_{\nu_{C/f}}\ar[uu]_(0.65){\cong}|(.5)\hole &\\
}$$
\end{corollary}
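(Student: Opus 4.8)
The plan is to assemble the diagram from the two ingredients already in hand, namely the identification $\nu_{C/f}\cong\nu_f$ recorded above and the comparison regular epimorphism $\sigma\colon P_f\to P_{C/f}$ with $\sigma\cdot\pi_f=\pi_{C/f}$ supplied by the preceding Corollary, combined with the defining property of the comparison morphisms. Indeed, the whole statement is the formal dual of Corollary \ref{normdecincomma}, read in $\CC^{\op}$ via $C/\CC=(\CC^{\op}/C)^{\op}$; what follows is the direct verification, which I find more transparent than transporting the diagram across the duality.

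First I would fix the canonical isomorphism $\iota\colon N_{C/f}\to N_f$ determined by $\nu_f\cdot\iota=\nu_{C/f}$; it exists and is invertible precisely because $\nu_{C/f}\cong\nu_f$ as (regular) subobjects of $B$, and it is the arrow marked $\cong$ in the diagram. Writing $\hat f'$ and $\check f'$ for the normal-closure and normal-dual-closure factors of $f$ formed in $C/\CC$, I would then record the two compatibility identities $\hat f=\iota\cdot\hat f'$ and $\check f=\check f'\cdot\sigma$. The first follows by cancelling the monomorphism $\nu_f$ in $\nu_f\cdot\hat f=f=\nu_{C/f}\cdot\hat f'=\nu_f\cdot\iota\cdot\hat f'$, and the second by cancelling the epimorphism $\pi_f$ in $\check f\cdot\pi_f=f=\check f'\cdot\pi_{C/f}=\check f'\cdot\sigma\cdot\pi_f$.

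The central square asserts $\iota\cdot\kappa_{C/f}\cdot\sigma=\kappa_f$, and I would verify it by composing with the monomorphism $\nu_f$: using $\nu_f\cdot\iota=\nu_{C/f}$, the defining identity $\nu_{C/f}\cdot\kappa_{C/f}=\check f'$, and then $\check f'\cdot\sigma=\check f=\nu_f\cdot\kappa_f$, the two sides become equal after postcomposition with $\nu_f$, so cancellation of $\nu_f$ yields the claim. (Precomposing with the epimorphism $\pi_f$ and invoking $\hat f=\iota\cdot\hat f'$ together with $\kappa_{C/f}\cdot\pi_{C/f}=\hat f'$ works equally well.) The two outer triangles involving the cospan $C\xrightarrow{j}A$, $C\xrightarrow{k}B$ are then immediate: $k=f\cdot j$ is exactly the hypothesis that $f$ is a morphism of $C/\CC$, while the remaining commutativities merely record that $\pi_{C/f}$, $\kappa_{C/f}$ and $\nu_{C/f}$ are morphisms of $C/\CC$, hence compatible with the structure maps out of $C$.

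I expect no genuine obstacle here, since everything is forced by the uniqueness of the comparison morphism as the diagonal of its defining square. The only point requiring care is the bookkeeping of which factor is cancelled on which side, i.e.\ correctly pairing the identity $\hat f=\iota\cdot\hat f'$ with the epimorphism $\pi_f$ and $\check f=\check f'\cdot\sigma$ with the monomorphism $\nu_f$.
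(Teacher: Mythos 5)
Your proposal is correct and matches the paper's own route: the paper obtains this corollary purely by dualizing Corollary \ref{normdecincomma} (no explicit chase is given there either), and your direct verification --- fixing $\iota$ with $\nu_f\cdot\iota=\nu_{C/f}$, invoking $\sigma\cdot\pi_f=\pi_{C/f}$ from the preceding corollary, and cancelling the monomorphism $\nu_f$ (resp.\ the epimorphism $\pi_f$) to obtain the central square $\iota\cdot\kappa_{C/f}\cdot\sigma=\kappa_f$ --- is exactly what that dualization unfolds to. There are no gaps; the pairing of each cancellation with the correct side is handled properly, and the outer triangles indeed reduce to the hypothesis $k=f\cdot j$ together with the two three-fold decompositions of $f$.
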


\begin{examples}\label{cosliceSetRing}
(1) For any set $C$, not only the normal dual closure of a morphism $f$ in the slice $\SET/C$ may be formed as in $\SET$, but also its normal closure, namely by the image of $f$ (Corollary \ref{Boolean}).

However, for a morphism $f:j\to k$ in the coslice $C/\SET$ with $f:A\to B$ in $\SET$, only the normal closure is formed as in $\SET$. For its normal dual closure $\pi_{C/f}: A\to P_{C/f}$, one identifies precisely the pairs of points in those fibres of $f$ in $A$ which intersect the image of $j$, {\em i.e.}, setting
$$A_j:=f^{-1}(\mathrm{Im}(k))=f^{-1}(f(j(C)))$$
one may take
$$P_{C/f}:=\{f^{-1}(fx)\mid x\in A_j\}+ (A\setminus A_j)=\{f^{-1}(fx)\mid x\in \mathrm{Im}(j)\}+ (A\setminus A_j), $$
with $\pi_{C/f}: A\to P_{C/f}$ mapping $x\in A$ either to $f^{-1}(fx)$ or to $x$ itself, depending on whether $x\in A_j$. We note that, since $P_f\cong A$, here the map $\sigma:P_f\to P_{C/f}$ coincides with $\pi_{C/f}$.

A surjective morphism $f:j\to k$ in $C/\SET$ is a normal epimorphism precisely when $f$ maps $A\setminus A_j$ injectively. Such morphisms are closed under composition, making normal decompositions in $C/\SET$ perfect, with comparison morphisms characterized as the surjective maps $f$ mappping $A_j$ injectively.

Note that, for $C=1$, the coslice $1/\SET$ is nothing but the category $\SET_*$ of pointed sets, and the above reproduces the description of the normal dual closure as given in Examples \ref{firstexas}(4), as well as the statements of \ref{firstexamplesagain}(4).  Observe that even in this case $\sigma=\pi_{1/f}: A\to P_{1/f}$ fails to be bijective, unless the basepoint of $A$ is the only element in $A$ that $f$ maps to the basepoint of $B$.

(2) In the coslice category $C/\mathsf{Ab}$ with any object $C$, the normal dual closure is discrete, {\em i.e.}, the normal dual closure of a regular epimorphism $f$ is given by $f$ itself: $\pi_{C/f}\cong f$. This is true even when $\mathsf{Ab}$ is traded for $\mathsf{Grp}$, and the straightforward proof given for $\mathsf{C/Grp}$ in Proposition \ref{cosliceGrpdualclosure} works in particular in $C/\mathsf{Ab}$.

(3) The category $R\text{-}\mathsf{CAlg_1}$  of commutative unital $R$-algebras (for a commutative unital ring $R$) is equivalently described as the coslice category $R/\mathsf{CRing_1}$. For a morphism $f:A\to B$ in $R\text{-}\mathsf{CAlg_1}$, since its normal  closure in $\mathsf{CRing}_1$ may be given by the identity map on $B$ (Examples \ref{firstexas}(3)), this therefore holds in $R\text{-}\mathsf{CAlg_1}$ as well. Furthermore, since its normal {\em dual} closure $\pi_f$ in $\mathsf{CRing_1}$ can be taken to be the projection $A\to A/\mathrm{Ker}f$, the commutative diagram of Corollary \ref{decompincoslice} simplifies to
$$ \xymatrix{ & A/\mathrm{Ker}f\ar[rd]^{\kappa_f}\ar[dd]_(0.7){\sigma}|(.5)\hole & \\
A\ar[rr]^(0.7)f\ar[ru]^{\pi_f}\ar[rd]_{\pi_{R/f}} && B\\
& P_{R/f}\ar[ru]_{\kappa_{R/f}} & \\
}$$
Here, as a regular epimorphism in the coslice category $R/\mathsf{CRing_1}$, the map $\pi_{R/f}$ is also a regular epimorphism in $\mathsf{CRing}_1$, which makes $\sigma$ surjective. But $\sigma$ is also injective, as a first factor of $\kappa_f$. Consequently, the normal decomposition of $f$ in $R\text{-}\mathsf{CAlg_1}$ may be formed as in $\mathsf{CRing}_1$.
\end{examples}

\section{Slices and coslices of the category of T$_1$-spaces}
For a T$_1$-space $C$ we want to find the normal closure of a morphism $f:q\to p$ in $\TOP_1/C$, for a continuous map $p:B\to C$ of T$_1$-spaces. We first assume that $f:A\hookrightarrow B$ is the inclusion map of a subspace $A$ of $B$, so that $q=p|_A$ is a restriction of $p$, and our first task is to construct the pushout $B+_AC$ in $\TOP_1$.

We denote the topological closure of a subset $X\subseteq B$ in $B$ by $\overline{X}$. For every $a\in A$ we let 
$$E_a:=p^{-1}(pa)\cap A$$
be the $(p|_A)$-fibre of $pa$ in $A$ and observe:
\begin{itemize}
\item Since $C$ is a T$_1$-space, the $p$-fibre $p^{-1}(pa)$ of $pa\in C$ is closed in $B$ and, hence, $$\overline{E_a}\subseteq\overline{p^{-1}(pa)}=p^{-1}(pa).$$ 
\item Therefore, the distinct members of $\{\overline{E_a}\mid a\in A\}$ form a partition of the set
$$D:=\bigcup_{a\in A}\overline{E_a}\subseteq B,$$
where $(\overline{E_a}=\overline{E_{a'}}\iff pa=pa')$, for all $a,a'\in A$.
\end{itemize}
We now consider the (conveniently assumed-to-be) disjoint union
$$P:=(B\setminus D)+C$$
and define the mapping $i:B\to P$ by
$$ ib= \left\{ \begin{array}{rcl}b & \mbox{if} & b\in B\setminus D, \\
pa &\mbox{if} & b\in\overline{E_a} \mbox{ with } a \in A.\\
\end{array}\right.
$$
With $j$ denoting the inclusion map $C\hookrightarrow P$, we obtain the commutative diagram $$\xymatrix{A\ar[r]^{p|_A}\ar[d]_f & C\ar[d]^j\\ B\ar[r]^i & P}$$
and let $P$ carry the finest topology making the maps $i$ and $j$ continuous. Then:

\begin{proposition}\label{T1pushout}
The above diagram is a pushout in $\TOP_1$.
\end{proposition}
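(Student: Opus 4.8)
The plan is to verify directly that the displayed square satisfies the universal property of a pushout in $\TOP_1$, after first confirming that $P$ does indeed lie in $\TOP_1$. For the latter I would use that $P$ carries the final topology with respect to $i$ and $j$, so a subset is closed in $P$ precisely when its preimages under $i$ and under $j$ are closed in $B$ and in $C$, respectively. A singleton $\{x\}$ with $x\in B\setminus D$ has $i^{-1}(\{x\})=\{x\}$ (closed in the T$_1$-space $B$) and empty preimage under $j$; a singleton $\{c\}$ with $c\in C$ has $j^{-1}(\{c\})=\{c\}$ (closed in $C$) and $i^{-1}(\{c\})$ equal either to $\emptyset$ or, when $c=pa$, to the closed set $\overline{E_a}$. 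Hence every singleton is closed and $P$ is T$_1$. Commutativity of the square is then a quick check: for $a\in A$ we have $a\in E_a\subseteq\overline{E_a}\subseteq D$, so $i(f(a))=pa=j((p|_A)(a))$.

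For the universal property, let a T$_1$-space $Z$ together with continuous maps $g\colon B\to Z$ and $h\colon C\to Z$ satisfying $g\cdot f=h\cdot(p|_A)$ be given. The decisive step---and the one I expect to carry all the weight---is to show that $g$ is constant on each $\overline{E_a}$ with value $h(pa)$. On $E_a=p^{-1}(pa)\cap A$ this is immediate, since every $a'\in E_a$ satisfies $g(a')=h(p a')=h(pa)$. Passing to the closure, continuity of $g$ gives $g(\overline{E_a})\subseteq\overline{g(E_a)}=\overline{\{h(pa)\}}=\{h(pa)\}$, where the last equality uses that singletons are closed in the T$_1$-space $Z$. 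This is exactly the mechanism flagged in Examples \ref{firstexas}(6): continuity together with the T$_1$-separation of the target forces a map that is constant on a set to be constant on its closure, and it is what makes the T$_1$-pushout collapse each $\overline{E_a}$ rather than merely each $E_a$.

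With that in hand, I would define $u\colon P\to Z$ on the two summands by $u|_{B\setminus D}=g|_{B\setminus D}$ and $u|_C=h$. The identity $u\cdot j=h$ holds by construction, and $u\cdot i=g$ holds on $B\setminus D$ by construction and on $D$ by the constancy just established, since for $b\in\overline{E_a}$ one has $u(i(b))=u(pa)=h(pa)=g(b)$. Continuity of $u$ is then automatic from the final topology on $P$, because $u\cdot i=g$ and $u\cdot j=h$ are continuous. Finally, uniqueness follows because $i$ and $j$ are jointly surjective---indeed $j(C)=C$ and $i(B)\supseteq B\setminus D$ together cover $P=(B\setminus D)+C$---so any mediating map is already pinned down by the two compatibility equations $u\cdot i=g$ and $u\cdot j=h$. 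The only genuinely non-formal ingredient is the closure-constancy argument of the second paragraph; everything else is bookkeeping once that is established.
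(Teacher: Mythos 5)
Your proof is correct and follows essentially the same route as the paper's: verifying the T$_1$-property of $P$ by checking that the fibres of $i$ and $j$ are closed, using continuity plus the T$_1$-separation of the target to push constancy from $E_a$ to $\overline{E_a}$, and then defining the mediating map piecewise with continuity coming from the final topology and uniqueness from joint surjectivity. The only cosmetic difference is that you explicitly record the commutativity of the square and phrase the T$_1$-check in terms of closed singletons, which the paper leaves implicit.
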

\begin{proof}
To show that $P$ is a T$_1$-space, it suffices to see that the $i$- and $j$-fibres of any $z\in P$ are closed in $B$ and $C$, respectively. But this is obvious since, for all $z\in P$,
$$i^{-1}z=\left\{ \begin{array}{rcl} \{z\} & \mbox{ if } & z\in B\setminus D,\\
\overline{E_a} & \mbox{ if } & z=pa \mbox{ with } a\in A,\\
\emptyset & \mbox{ if } & z\in C\setminus p(A);\\ 
\end{array}\right.
\qquad j^{-1}z=\left\{ \begin{array}{rcl} \emptyset & \mbox{ if } & z\in B\setminus D,\\
\{z\} & \mbox{ if } & z\in C.\\
\end{array}\right.
$$
Regarding the universal property, consider any continuous maps $g: B\to Y, h:C\to Y$ into a T$_1$-space $Y$ such that $ga=h(pa)$ for all $a\in A$. Since for any $a'\in E_a$ one has $pa=pa'$ and then $ga'=h(pa')=h(pa)=ga$, so that $g(E_a)=\{ga\}$, the T$_1$-property of $Y$ forces $g$ to be constant even on $\overline{E_a}$; indeed, denoting the topological closure also in $Y$ in a standard manner, one has
$$g(\overline{E_a})\subseteq \overline{g(E_a)}=\overline{\{ga\}}=\{ga\}.$$
Therefore, when we can define the map $t:P\to Y$ by $t|_{B\setminus D}= g|_{B\setminus D}$ and $t|_C=h$, it actually satisfies $t\cdot i=g$ and, trivially, $t\cdot j=h$. Indeed, for every $b\in \overline{E_a}\subseteq D$ with $a\in A$ we compute
$$t(ib)=t(pa)=h(pa)=ga=gb,$$
so that $t|_D=g|_D$.
Consequently, by the definition of the topology on $P$, the map $t$ is continuous and trivially uniquely determined by the constraints $t\cdot i=g$ and $t\cdot j=h$.
\end{proof}

\begin{corollary}\label{T1corollary}
In the setting of the Proposition one has $N_{f/C}=\bigcup_{a\in A}\overline{E_a}$.
\end{corollary}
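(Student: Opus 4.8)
The plan is to read $N_{f/C}$ directly off Proposition~\ref{discreteincomma}(1), feeding in the explicit model of the pushout $B+_AC$ provided by Proposition~\ref{T1pushout}. By Proposition~\ref{discreteincomma}(1) the normal closure $\nu_{f/C}$ is the pullback of $j_f$ along $\widetilde{p}\cdot f_1$ (with $\widetilde{p}=1_B+_Ap$), computed in $\TOP_1$; so the first step is to locate these two morphisms inside the concrete pushout $P=(B\setminus D)+C$. Since $P$ realises $B+_AC$, its injection from $C$ is $j_f=j$, the inclusion $C\hookrightarrow P$, and its injection from $B$ is the map $i$ of Proposition~\ref{T1pushout}. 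A one-line check on the defining property of $1_B+_Ap$ then gives $\widetilde{p}\cdot f_1=i$, since $1_B+_Ap$ sends the left copy of $B$ in $B+_AB$ into $P$ via the identity followed by $i$.

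With these identifications in hand, $N_{f/C}$ is the pullback of $j$ along $i$. Because $j=j_f$ is a split monomorphism (with retraction $r=\langle p,1_C\rangle$), so is its base change $\nu_{f/C}$; in particular $\nu_{f/C}$ is a regular monomorphism and hence a subspace embedding, so it suffices to identify its image as a subset of $B$ and equip it with the subspace topology. Here I would use that pullbacks in $\TOP_1$ agree with those in $\TOP$ (subspaces and products of T$_1$-spaces being T$_1$), so that the fibre product of $i$ and $j$ is the subspace $\{b\in B\mid i(b)\in j(C)\}$ of $B$. The explicit formula for $i$ then finishes the computation: $i(b)$ lands in the $C$-summand $j(C)$ exactly when $b\in\overline{E_a}$ for some $a\in A$, whereas $i(b)=b\notin j(C)$ for $b\in B\setminus D$; hence the image is $D=\bigcup_{a\in A}\overline{E_a}$, giving $N_{f/C}=\bigcup_{a\in A}\overline{E_a}$ as claimed.

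I do not anticipate a serious obstacle: the argument is essentially bookkeeping once the abstract pushout appearing in Proposition~\ref{discreteincomma}(1) is matched with the concrete model $P$ of Proposition~\ref{T1pushout}. The only points deserving care are the two identifications $j_f=j$ and $\widetilde{p}\cdot f_1=i$ under $B+_AC\cong P$, and the remark that regularity of $\nu_{f/C}$ forces the subspace topology, so that the purely set-theoretic computation of the fibre product indeed delivers $D$ with its subspace topology rather than merely its underlying set.
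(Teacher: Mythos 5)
Your proof is correct and follows essentially the same route as the paper's: the paper likewise reads $N_{f/C}$ off Proposition \ref{discreteincomma}(1) as the pullback of the injection $j$ along $i$ in the concrete model $P=(B\setminus D)+C$, obtaining $N_{f/C}=i^{-1}(C)=\{b\in B\mid ib\in C\}=D$. Your extra care in matching $j_f$ with $j$ and $\widetilde{p}\cdot f_1$ with $i$, and in noting that the set-theoretic fibre product carries the subspace topology, only makes explicit identifications the paper leaves implicit.
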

\begin{proof}
By definition, the normal closure of $f$ is obtained by pulling back the pushout injection $j$ along $i$ which, in the notation of Section 6, gives
$$N_{f/C}= i^{-1}(C)=\{b\in B\mid ib\in C\}=D=\bigcup_{a\in A}\overline{E_a}.$$
\end{proof}
Here is an easy example showing that one may have strict inclusions $A\subset N_{f/C}\subset N_f=\overline{A}$:
\begin{example}
In the Euclidean plane $\mathbb R^2$ provided with the (first) projection $p:\mathbb R^2\to \mathbb R$, consider the open disc $A$ of radius $1$ about the origin, and let $f$ be its inclusion map into $\mathbb R^2$. Then $N_{f/\mathbb R}=\overline{A}\setminus\{-1,1\}$, where $\overline{A}$ is the closed disc of radius $1$ about the origin.
\end{example}

\begin{theorem}\label{slicedT1closure}
For a pair of continuous maps $f:A\to B$ and $p:B\to C$ of T$_1$-spaces, regarded as a morphism $f: q\to p$ in $\TOP_1/C$ with $q=p\cdot f$, the normal closure of $f$ over $C$ may be computed as
$$\nu_{f/C}: N_{f/C}=\bigcup_{a\in A}\overline{f(q^{-1}(qa))}\hookrightarrow B.$$
The $\TOP_1/C$-morphism $f:q\to p$ is a normal monomorphism if, and only if, $f$ is an embedding such that, for every $a\in A$, the set $f(q^{-1}(qa))$ is closed in $B$. The class $\mathsf{NMono}(\TOP_1/C)$ is closed under composition.
\end{theorem}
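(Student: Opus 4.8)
The plan is to reduce all three assertions to the subspace-inclusion case already settled in Corollary~\ref{T1corollary}, then to read off the characterization of normal monomorphisms and finally to deduce closure under composition from that characterization. For the closed formula for $\nu_{f/C}$ I would factor the underlying map as $f=m\cdot e$, where $e\colon A\to f(A)$ is the surjective corestriction and $m\colon f(A)\hookrightarrow B$ is the inclusion of the image carrying the subspace topology. Read as a factorization $q\xrightarrow{e}q'\xrightarrow{m}p$ in $\TOP_1/C$ with $q'=p|_{f(A)}$, the map $e$ is a surjection, hence an epimorphism in $\TOP_1$; since the forgetful functor $\TOP_1/C\to\TOP_1$ reflects epimorphisms, $e$ is epic in the slice. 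Lemma~\ref{triviallemma} then yields $\nu_{f/C}\cong\nu_{m/C}$, so it suffices to apply Corollary~\ref{T1corollary} to the inclusion $m$. For $a\in A$ the fibre occurring there is $E_{f(a)}=p^{-1}(pf(a))\cap f(A)=f(q^{-1}(qa))$, using $p\cdot f=q$; letting $a$ range over $A$ exhausts $f(A)$, and the displayed union formula for $N_{f/C}$ follows.

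Next I would characterize normal monomorphisms. By definition $f$ is normal iff the induced map $\hat f\colon A\to N_{f/C}$ with $\nu_{f/C}\cdot\hat f=f$ is an isomorphism in $\TOP_1/C$, i.e. a homeomorphism over $C$. Since $\nu_{f/C}$ is a subspace inclusion, any normal $f$ is a composite $\nu_{f/C}\cdot\hat f$ of embeddings and hence an embedding, while conversely, if $f$ is an embedding then $\hat f$ is an embedding onto $f(A)\subseteq N_{f/C}$; thus $f$ is normal iff $f$ is an embedding and $\hat f$ is surjective, i.e. $N_{f/C}=f(A)$. The key observation is that the sets $\overline{f(q^{-1}(qa))}$ lie inside the pairwise disjoint fibres $p^{-1}(qa)$ and that $f(A)\cap p^{-1}(qa)=f(q^{-1}(qa))$; hence $N_{f/C}=f(A)$ holds iff $\overline{f(q^{-1}(qa))}=f(q^{-1}(qa))$ for every $a$, that is, each $f(q^{-1}(qa))$ is closed in $B$. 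This gives the stated characterization.

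Finally, closure under composition is read off from this characterization. Given normal monomorphisms $f\colon q\to p$ and $g\colon p\to r$ with underlying embeddings $f\colon A\to B$ and $g\colon B\to D$, the composite $gf$ is again an embedding. For the fibre condition fix $a\in A$ and set $S:=f(q^{-1}(qa))$, which is closed in $B$ and contained in $p^{-1}(qa)$ because $p\cdot f=q$. Writing $b=f(a)$, normality of $g$ makes $g(p^{-1}(pb))=g(p^{-1}(qa))$ closed in $D$; and since $g$ is an embedding, it restricts to a homeomorphism $p^{-1}(qa)\to g(p^{-1}(qa))$ carrying the closed set $S$ to a closed subset of $g(p^{-1}(qa))$. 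As a closed subset of a closed subspace is closed, $(gf)(q^{-1}(qa))=g(S)$ is closed in $D$, so $gf$ is a normal monomorphism.

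I expect the main obstacle to be the bookkeeping in the characterization step, namely verifying the fibrewise disjointness of the sets $\overline{f(q^{-1}(qa))}$ and the identity $f(A)\cap p^{-1}(qa)=f(q^{-1}(qa))$ that convert the single equality $N_{f/C}=f(A)$ into the pointwise closedness condition; once these are in place, the reduction via the image factorization and the composition argument are essentially formal, the latter being just the transitivity of ``closed in a closed subspace''.
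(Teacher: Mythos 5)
Your proposal is correct and follows essentially the same route as the paper's proof: reduce to the image inclusion via Lemma~\ref{triviallemma} and Corollary~\ref{T1corollary} using the identity $p^{-1}(qa)\cap f(A)=f(q^{-1}(qa))$, characterize normality by the comparison map $\hat f$ (equivalently $\kappa_{f/C}$, since $\pi_{f/C}\cong 1_A$) being a homeomorphism, and deduce closure under composition from the transitivity of ``closed in a closed subspace.'' The points you flag as needing verification --- that $\overline{f(q^{-1}(qa))}\subseteq p^{-1}(qa)$ (from the T$_1$-property of $C$) and the fibre identity --- are exactly the one-line facts the paper also relies on, so there is no gap.
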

\begin{proof}
By Lemma \ref{triviallemma}, the morphism $f$ has the same normal closure as the subspace inclusion $f(A)\hookrightarrow B$ in $\TOP_1/C$, so that the Corollary gives
$$N_{f/C}=\bigcup_{a\in A}\overline{p^{-1}(p(fa))\cap f(A)}.$$ Since $p^{-1}(qa)\cap f(A)=f(q^{-1}(qa))$ for all $a\in A$, the claimed formula follows.

For the characterization of normal monomorphisms in $\TOP_1/C$, first assume that $f:q\to p$ is such. Since the normal dual closure of $f$ in $\TOP_1/C$ is, like in $\TOP_1$, trivial (Examples \ref{firstexamplesagain}(6)), we may take $\pi_{f/C}$ to be the identity map on $A$ and obtain that the comparison map $\kappa_{f/C}:A\to N_{f/C},\,a\mapsto fa,$ must be a homeomorphism. This makes $f=\nu_{f/C}\cdot\kappa_{f/C}$ an embedding. Furthermore, any $y\in\overline{f(q^{-1}(qa))}^B$ with $a\in A$ must be of the form $y=fx$ with $x\in A$, so that
$$qx=py\;\in\;\;\overline{p(f(q^{-1}(qa)))}^C=\overline{q(q^{-1}(qa)}^C=\overline{\{qa\}}^C=\{qa\}\,.      $$
Consequently, $x\in q^{-1}(qa)$, and $y\in f(q^{-1}(qa))$ follows. This shows that $f(q^{-1}(qa))$ is closed in $B$.

Conversely, if $f$ is an embedding with every set $f(q^{-1}(qa))$ closed in $B$,
then $\kappa_{f/C}: A\to N_{f/C}$ must be an embedding as well, and $N_{f/C}=\bigcup_{a\in A}f(q^{-1}(qa))$ holds. This equality makes $\kappa_{f/C}$ surjective and, hence a homeomorphism.

If $f:p\to q$ and $g:q\to r$ are both normal monomorphisms in $\TOP_1/C$, with $r:D\to C$ in $\TOP_1$, we have subspace embeddings $f:A\to B$ and $g:B\to D$ and may assume that these are inclusion maps. Then all, $A, B$ and every $q^{-1}(qa)\;(a\in A)$, are subspaces of $D$, with $q^{-1}(qa)=r^{-1}(ra)\cap A$ closed in $B$ and $p^{-1}(pa)=r^{-1}(ra)\cap B$ closed in $C$, which makes $q^{-1}(qa)$ also closed in $D$.
\end{proof}

\begin{corollary}
The normal decomposition of $f:q\to p$ in $\TOP_1/C$ factors its underlying continuous map $f:A\to B$ as
$$\xymatrix{A\ar[rr]^{\pi_{f/C}\cong 1_A} && A \ar[rr]^{\kappa_{f/C}\qquad\quad} && \bigcup_{a\in A}\overline{f(q^{-1}(qa))}\ar[rr]^{\qquad\quad\nu_{f/C}} &&B}.$$
These decompositions are perfect, and $f$ is a comparison morphism if, and only if, $B=p^{-1}(q(A))$ and $f$ is a fibrewise dense map in $\TOP_1$, that is: for every $a\in A$, the image  of the restriction $q^{-1}(qa)\longrightarrow p^{-1}(qa)$ of $f$ is dense in its codomain.
\end{corollary}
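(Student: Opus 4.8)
The plan is to assemble the decomposition from the ingredients already in hand and then read off both claimed assertions from Theorem~\ref{perfecttheorem}. First I would record that the normal dual closure in $\TOP_1/C$ is trivial, i.e.\ $\pi_{f/C}\cong 1_A$: this is immediate from $\pi_{f/C}\cong\pi_f$ (the slice computation preceding Proposition~\ref{discreteincomma}) together with $\pi_f\cong\mathrm{id}_A$ in $\TOP_1$ (Examples~\ref{firstexas}(6)). Since $\nu_{f/C}$ is the subspace inclusion $N_{f/C}=\bigcup_{a\in A}\overline{f(q^{-1}(qa))}\hookrightarrow B$ by Theorem~\ref{slicedT1closure}, and the comparison map is $\kappa_{f/C}\colon A\to N_{f/C}$, $a\mapsto fa$ (as noted in the proof of that theorem), the normal decomposition $f=\nu_{f/C}\cdot\kappa_{f/C}\cdot\pi_{f/C}$ takes exactly the displayed form.

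For perfectness, by Theorem~\ref{perfecttheorem}(3) it suffices to show that both $\CN=\mathsf{NMono}(\TOP_1/C)$ and $\CP=\mathsf{NEpi}(\TOP_1/C)$ are closed under composition. The first is asserted in Theorem~\ref{slicedT1closure}. For the second I would argue that $\CP=\mathsf{Iso}(\TOP_1/C)$: since $\pi_{f/C}\cong 1_A$, the factorization $f=\check f\cdot\pi_{f/C}$ forces $\check f\cong f$, so $f$ is a normal epimorphism (i.e.\ $\check f$ is an isomorphism) exactly when $f$ itself is an isomorphism. The class of isomorphisms is trivially closed under composition, and so the decompositions are perfect.

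For the characterization of comparison morphisms, I would use the description $\CK=\{k\mid \nu_k,\pi_k\ \text{isomorphisms}\}$ from Theorem~\ref{perfecttheorem}(3). As $\pi_{f/C}\cong 1_A$ is always an isomorphism, $f$ is a comparison morphism precisely when $\nu_{f/C}$ is one; and since $\nu_{f/C}$ is a subspace inclusion, this happens exactly when $N_{f/C}=B$. It then remains to translate $N_{f/C}=B$ into the two stated conditions. Here the key observations are that, since $C$ is T$_1$, each fibre $p^{-1}(qa)$ is closed in $B$, whence $\overline{f(q^{-1}(qa))}\subseteq p^{-1}(qa)$, and that the fibres over distinct values in $q(A)$ are pairwise disjoint. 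Thus $N_{f/C}=\bigcup_{a\in A}\overline{f(q^{-1}(qa))}$ is contained in $p^{-1}(q(A))$, and the union splits fibrewise. If $N_{f/C}=B$, then every $b\in B$ satisfies $p(b)\in q(A)$, giving $B=p^{-1}(q(A))$; moreover, intersecting with a single fibre $p^{-1}(qa)$ and using disjointness yields $\overline{f(q^{-1}(qa))}=p^{-1}(qa)$, which is fibrewise density. The converse is the same computation run backwards.

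The only place requiring care --- and the step I would treat as the main obstacle, though it is elementary --- is this last fibrewise bookkeeping: one must keep straight that closures are taken in $B$ yet land inside the closed fibres $p^{-1}(qa)$, so that ``dense in its codomain $p^{-1}(qa)$'' coincides with $\overline{f(q^{-1}(qa))}^{\,B}=p^{-1}(qa)$, and that it is precisely the disjointness of distinct fibres that lets one pass between the single global equality $N_{f/C}=B$ and the two separate conditions.
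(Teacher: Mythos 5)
Your proposal is correct and follows essentially the same route as the paper: perfectness via $\mathsf{NEpi}(\TOP_1/C)=\mathrm{Iso}$ together with the composition-closedness of $\mathsf{NMono}(\TOP_1/C)$ from Theorem~\ref{slicedT1closure}, then identifying comparison morphisms with the condition $N_{f/C}=B$ and translating this fibrewise into $B=p^{-1}(q(A))$ plus fibrewise density. The only (immaterial) difference is that you obtain $\overline{f(q^{-1}(qa))}^{B}\subseteq p^{-1}(qa)$ directly from closedness of the fibres, whereas the paper routes through continuity of $p$ via $py\in\overline{p(f(q^{-1}(qa)))}^{C}=\{qa\}$; both rest on the same T$_1$ hypothesis on $C$.
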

\begin{proof}
The class $\mathsf{NEpi}(\TOP_1/C)$ is the class of isomorphisms and, hence, trivially closed under composition, and so is  $\mathsf{NMono}(\TOP_1/C)$, by the Theorem. This makes the normal decompositions in $\TOP_1/C$ perfect.

Since the normal dual closure $\pi_f$ of every morphism $f$ in $\TOP_1/C$ is an isomorphism, $f$ is a comparison morphism precisely when $\nu_f$ is an isomorphism, {\em i.e.}, when $B=N_{f/C}$. Under this condition every $y\in B$ lies in $\overline{f(q^{-1}(qa))}^B$ for some $a\in A$, and (as in the Proof of the Theorem) $py\in \overline{p(f(q^{-1}(qa)))}^C=\{qa\}$ and then $y\in p^{-1}(qa)\subseteq p^{-1}(q(A))$ follows. This shows $B=p^{-1}(q(A))$. Furthermore, we note that, for {\em any} $a\in A$, since the fibre $p^{-1}(qa)$ is always closed in $B$, the set
$\overline{f(q^{-1}(qa))}^B$ is actually the closure of $f(q^{-1}(qa))$ in $p^{-1}(qa)$. As the distinct members of $\{p^{-1}(qa)\mid a\in A\}$ form a partition of $B=p^{-1}(q(A))=N_{f/C}$, we conclude that $\overline{f(q^{-1}(qa))}^B=p^{-1}(q(a))$ holds for every $a\in A$.

This makes the stated conditions for $f$ being a comparison map necessary. Their sufficiency is trivial; indeed, if we have $\overline{f(q^{-1}(qa))}^B=p^{-1}(qa)$ for all $a\in A$ with $p^{-1}(q(A))=B$, taking their union gives $N_{f/C}=B$.
\end{proof}

\begin{corollary}
For any T$_1$-space $C$,  $(\{\text{comparison morphism}\}, \{\text{normal mono }\})$ is an orthogonal factorization system of $\TOP_1/C$. For $C=1$, it gives the $(\{\text{dense}\},\{\text{closed embedding}\})$-system of $\TOP_1$.
\end{corollary}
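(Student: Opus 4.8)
The plan is to read this off from the perfectness of the normal decompositions in $\TOP_1/C$, already established in the preceding Corollary, together with the fact that there $\mathsf{NEpi}(\TOP_1/C)$ reduces to the isomorphisms. Throughout write $\CN=\mathsf{NMono}(\TOP_1/C)$ and $\CP=\mathsf{NEpi}(\TOP_1/C)=\mathsf{Iso}(\TOP_1/C)$. By Theorem \ref{slicedT1closure} the class $\CN$ is closed under composition, so Theorem \ref{perfecttheorem}(1) already supplies an orthogonal factorization system $(\mathcal{C},\CN)$ with $\mathcal{C}={}^{\bot}\CN$. The only task that remains is to identify the left class $\mathcal{C}$ with the class of comparison morphisms of $\TOP_1/C$.

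First I would recall that, applied inside the category $\TOP_1/C$, Remark \ref{secondremark}(2) describes the left orthogonal complement of the reflective class $\CN$ as ${}^{\bot}\CN=\{f\mid \nu_{f/C}\text{ is an isomorphism}\}$. Because in $\TOP_1/C$ the normal dual closure $\pi_{f/C}$ is an isomorphism for \emph{every} morphism $f$, this set coincides with $\{f\mid \nu_{f/C}\text{ and }\pi_{f/C}\text{ are isomorphisms}\}=\CK$. By Theorem \ref{perfecttheorem}(3) this class $\CK$ is precisely the class of comparison morphisms: every $\kappa_{g/C}$ lies in $\CK$, while conversely any $k\in\CK$ equals its own comparison morphism up to isomorphism, its normal decomposition $k=\nu_{k/C}\cdot\kappa_{k/C}\cdot\pi_{k/C}$ having invertible outer factors. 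Hence $\mathcal{C}=\{\text{comparison morphisms}\}$, and $(\{\text{comparison morphism}\},\{\text{normal mono}\})$ is the asserted orthogonal factorization system. Equivalently, one may feed the o.t.f.s. $(\CP,\CK,\CN)$ of Corollary \ref{comparisonmorphisms} into Remarks \ref{odfsremarks}(3): its associated left class is $\CK\cdot\CP=\CK$, since $\CP=\mathsf{Iso}$.

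For the case $C=1$ I would specialize the pointwise descriptions already in hand. Taking $C$ terminal identifies $\TOP_1/1$ with $\TOP_1$, and the unique map $q\colon A\to 1$ has the single fibre $q^{-1}(qa)=A$. Under this specialization the characterization of normal monomorphisms in Theorem \ref{slicedT1closure} collapses to ``$f$ is an embedding with $f(A)=f(q^{-1}(qa))$ closed in $B$'', that is, to the closed embeddings; and the characterization of comparison morphisms in the preceding Corollary collapses, since $p^{-1}(q(A))=B$ and the only fibrewise restriction is $f\colon A\to B$ itself, to ``$f(A)$ is dense in $B$''. Thus the system becomes the $(\{\text{dense}\},\{\text{closed embedding}\})$-factorization system of $\TOP_1$.

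No substantial obstacle arises, since the statement merely repackages the perfectness result; the single point deserving care is the identification $\mathcal{C}=\CK$, which rests entirely on the invertibility of $\pi_{f/C}$ throughout $\TOP_1/C$, together with a quick check of the degenerate case $A=\emptyset$ (where both ``dense map into $B$'' and ``comparison morphism'' force $B=\emptyset$) in the $C=1$ specialization.
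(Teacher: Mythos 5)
Your proposal is correct and follows essentially the route the paper intends for this corollary: it is stated without proof precisely because it is the combination of the perfectness established in the preceding corollary with Theorem \ref{perfecttheorem} (equivalently Corollary \ref{comparisonmorphisms} and Remarks \ref{odfsremarks}(3)), using that $\mathsf{NEpi}(\TOP_1/C)=\mathsf{Iso}(\TOP_1/C)$ to identify the left class ${}^{\bot}\CN=\CK\cdot\CP$ with $\CK$, the comparison morphisms. Your specialization at $C=1$, including the check of the degenerate case $A=\emptyset$, matches the paper's characterizations of normal monomorphisms as closed embeddings and comparison morphisms as (fibrewise) dense maps.
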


We now turn to the coslices of $\TOP_1$ for any T$_1$-space $C$. Unlike in the case of the normal closure in the slices of $\TOP_1$, for the coslices one easily proves:
\begin{proposition}
The normal dual closure of a morphism in any coslice $C/\TOP_1$ may be computed by providing its construction in $C/\SET$ with the pushout topology.
\end{proposition}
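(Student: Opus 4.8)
The plan is to reduce the entire statement to a single $T_1$-separation check. First I would recall that a pushout is a connected colimit, so it is created by the forgetful functor $C/\TOP_1\to\TOP_1$ (and likewise by $C/\SET\to\SET$); hence, by the construction Corollary preceding Corollary \ref{decompincoslice}, the normal dual closure $\pi_{C/f}$ is computed as the pushout in $\TOP_1$ of the span $C\xleftarrow{p_f}C\times_B A\xrightarrow{f_2\cdot\widetilde j}A$. Since $\TOP_1$ is reflective in $\TOP$, a pushout in $\TOP_1$ is obtained by applying the $T_1$-reflector to the corresponding pushout in $\TOP$; and because the forgetful functor $\TOP\to\SET$ preserves all colimits, the underlying set of the latter is exactly the $\SET$-pushout equipped with the final (pushout) topology. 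Thus it suffices to prove that this $\TOP$-pushout is already $T_1$: then the reflector acts as the identity, and the $\TOP_1$-pushout coincides with the $C/\SET$-construction carrying the pushout topology, as claimed.

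Next I would simplify the pushout topology. The coprojection $\pi_{C/f}:A\to P_{C/f}$ is surjective (the projection $p_f$ is split by $c\mapsto(c,j(c))$, so in the pushout every element of $C$ is identified with one coming from $A$), and the other coprojection satisfies $\bar j=\pi_{C/f}\cdot j$. Since $j$ is continuous, the constraint imposed by $\bar j$ on the final topology is redundant, so the pushout topology is simply the quotient topology on $A$ induced by $\pi_{C/f}$. By Examples \ref{cosliceSetRing}(1), $\pi_{C/f}$ identifies precisely those points of $A$ lying in a common fibre $f^{-1}(b)$ with $b\in\mathrm{Im}(k)$, where $k=f\cdot j$ is the structure map of the codomain.

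It then remains to verify that this quotient of $A$ is $T_1$. A quotient of any space is $T_1$ exactly when every equivalence class is closed in the domain, since a point of the quotient is closed if and only if its saturated preimage is closed. Here the non-trivial classes are the fibres $f^{-1}(b)$ with $b\in\mathrm{Im}(k)$, which are closed in $A$ because $f$ is continuous and $\{b\}$ is closed in the $T_1$-space $B$; the remaining classes are singletons, closed because $A$ is itself $T_1$. Hence every class is closed and $P_{C/f}$ is $T_1$, which completes the reduction.

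The main obstacle---and indeed the whole content of the statement---is exactly this separation check, together with the observation that it renders the $T_1$-reflection trivial. The point worth emphasising is the contrast with the slice case of Theorem \ref{slicedT1closure}, where forming the pushout genuinely forced the passage to Kuratowski closures of fibres: in the coslice the classes to be collapsed are themselves fibres of continuous maps into the $T_1$-space $B$, hence already closed, so no closure operation intervenes and the purely set-theoretic description obtained in $C/\SET$ survives verbatim once topologized.
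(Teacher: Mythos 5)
Your proof is correct and follows essentially the same route as the paper's: both reduce the pushout topology to the quotient topology along the surjection $\pi_{C/f}$ (the leg through $C$ being redundant since it factors as $\pi_{C/f}\cdot j$), and both verify the $T_1$-property by observing that the fibres of $\pi_{C/f}$ --- fibres of the continuous map $f$ over points of the $T_1$-space $B$, or singletons --- are closed in $A$. The only cosmetic difference is that you pass through the $T_1$-reflection of the $\TOP$-pushout and show it acts trivially, whereas the paper directly notes that the $\TOP$-pushout already lies in, and hence is a pushout in, $\TOP_1$.
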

\begin{proof}
For a morphism $f:j\to k$ in $C/\TOP_1$ with a continuous map $f:A\to B$ of T$_1$-spaces, with $A_j=f^{-1}(\mathrm{Im}(k))$ one considers the set
$$P_{C/f}=\{f^{-1}(fx)\mid x\in A_j\}+ (A\setminus A_j), $$
of Examples \ref{cosliceSetRing}(1). At the $\SET$-level,
 it comes with the pushout maps
$$\pi_{C/f}: A\longrightarrow P_{C/f}\qquad\text{and}\qquad\pi_{C/f}\cdot j:C\longrightarrow  P_{C/f}.$$
The quotient topology on $P_{C/f}$ induced by $\pi_{C/f}$ makes also the map $\pi_{C/f}\cdot j$ continuous, so that $P_{C/f}$ becomes a pushout in $\TOP$. But $P_{C/f}$ actually lies in $\TOP_1$, as an easy inspection of the fibres of $\pi_{C/f}$ (like in the Proof of Proposition \ref{T1pushout}) reveals.
\end{proof}

In conjunction with Examples \ref{firstexamplesagain}(6) and maintaining the above notation, we may now conclude in  straightforward manner:
\begin{corollary}
The normal decomposition of $f:j\to k$ in $C/\TOP_1$ factors its underlying continuous map $f:A\to B$ as
$$\xymatrix{A\ar[rr]^{\pi_{C/f}} && P_{C/f}\ar[rr]^{\kappa_{C/f}} && \overline{f(A)}\ar[rr]^{\nu_{C/f}\,\cong\,\nu_f} &&B}.$$
The morphism $f$ is a normal epimorphism in $C/\TOP_1$ if, and only if, it is a quotient map which maps $A\setminus f^{-1}(\mathrm{Im}(k))$ injectively, and $f$ is a normal monomorphism if, and only if, it is a closed embedding.  As both of these types of morphisms are closed under composition, the normal decompositions in $C/\TOP_1$ are perfect, and
$f$ is a comparison map if, and only if, $f$ has a dense image in $B$ and maps $f^{-1}(\mathrm{Im}(k))$ injectively. 
\end{corollary}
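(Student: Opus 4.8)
The plan is to read off all four assertions from the general theory of Section 4, the set-level description of Examples \ref{cosliceSetRing}(1), and the preceding Proposition, using the duality formula $\nu_{C/f}\cong\nu_f$ throughout. First I would record the factorization. By duality the normal closure in the coslice is computed as in $\TOP_1$, so that $\nu_{C/f}\cong\nu_f$ is the Kuratowski-closure inclusion $\overline{f(A)}\hookrightarrow B$ of Examples \ref{firstexas}(6), while the preceding Proposition identifies $\pi_{C/f}\colon A\to P_{C/f}$ with the $C/\SET$ normal dual closure carrying the quotient topology; inserting the comparison map $\kappa_{C/f}$ in between then yields the displayed factorization of $f$. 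The normal monomorphisms follow by duality alone: $f$ is a normal monomorphism in $C/\TOP_1$ exactly when $\hat f$ is an isomorphism, i.e.\ when $\nu_{C/f}\cong f$, and since $\nu_{C/f}\cong\nu_f$ this happens precisely when $f$ is a normal monomorphism in $\TOP_1$, hence a closed embedding by Examples \ref{firstexamplesagain}(6).

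For the normal epimorphisms I would combine the set-level and topological pictures. Writing $A_j=f^{-1}(\mathrm{Im}(k))$, the map $\pi_{C/f}$ collapses exactly the fibres $f^{-1}(fx)$ with $x\in A_j$, all of which lie inside $A_j$, and leaves $A\setminus A_j$ untouched; so by Examples \ref{cosliceSetRing}(1) the induced $\check f$ is a bijection if and only if $f$ is surjective and injective on $A\setminus A_j$. Since $\pi_{C/f}$ carries the quotient topology it is a quotient map, whence $\check f$ is a homeomorphism if and only if $f=\check f\cdot\pi_{C/f}$ is a quotient map (using the cancellation property of quotient maps together with the fact that a bijective quotient map is a homeomorphism). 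Thus $f$ is a normal epimorphism precisely when it is a quotient map mapping $A\setminus f^{-1}(\mathrm{Im}(k))$ injectively.

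The main obstacle is the closure of the normal epimorphisms under composition, which then yields perfection through Theorem \ref{perfecttheorem}(3), closed embeddings composing trivially. Here I would take composable normal epimorphisms $f\colon j\to k$ and $g\colon k\to l$ with underlying maps $f\colon A\to B$ and $g\colon B\to D$, so that $l=g\cdot f\cdot j$. The composite $g\cdot f$ is again a quotient map, and the delicate point is the injectivity condition. From $\mathrm{Im}(k)\subseteq g^{-1}(g(\mathrm{Im}(k)))=g^{-1}(\mathrm{Im}(l))$ one obtains the inclusion $A_j=f^{-1}(\mathrm{Im}(k))\subseteq f^{-1}(g^{-1}(\mathrm{Im}(l)))=(g\cdot f)^{-1}(\mathrm{Im}(l))$, so that $A\setminus(g\cdot f)^{-1}(\mathrm{Im}(l))$ sits inside $A\setminus A_j$, where $f$ is injective, and $f$ sends it into $B\setminus g^{-1}(\mathrm{Im}(l))$, where $g$ is injective; a short fibre chase then shows $g\cdot f$ injective on $A\setminus(g\cdot f)^{-1}(\mathrm{Im}(l))$, as required.

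Finally, the comparison morphisms are exactly those $f$ for which both $\nu_{C/f}$ and $\pi_{C/f}$ are isomorphisms, by Theorem \ref{perfecttheorem}(3). The first condition reads $\overline{f(A)}=B$, i.e.\ $f$ has dense image in $B$; the second, by the fibre description of $\pi_{C/f}$ above, says precisely that $f$ be injective on $f^{-1}(\mathrm{Im}(k))$. Combining the two gives the stated characterization.
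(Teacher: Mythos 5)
Your proposal is correct and follows exactly the route the paper intends: the paper states this corollary without proof, saying it follows ``in straightforward manner'' from the preceding Proposition (the dual closure $\pi_{C/f}$ as the $C/\SET$ construction with the quotient topology), the duality $\nu_{C/f}\cong\nu_f$, the characterizations in Examples \ref{firstexamplesagain}(6) and \ref{cosliceSetRing}(1), and Theorem \ref{perfecttheorem}(3), which is precisely what you assemble. Your explicit fibre chase for closure of normal epimorphisms under composition and the quotient-map cancellation argument are just the details the paper leaves to the reader.
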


\section{Slices and coslices of the category of groups}
Proceeding similarly as in the previous section, for a group $C$ we want to find the normal closure $N_{f/C}\hookrightarrow B$ of a morphism $f:q\to p$ in $\mathsf{Grp}/C$, where $p:B\to C$ in $\mathsf{Grp}$ is arbitrary and $f$ is first assumed to be a subgroup inclusion map $A\hookrightarrow B$, so that $q=p|_A$ is a restriction of $p$. Our goal is to find an acceptable description of the normal closure of $f$ as given by the subgroup $N_{f/C}$ of $B$, avoiding recourse to  the pushout $B+_AC$ in $\mathsf{Grp}$ to the largest extent possible.

To this end we form the subgroup
$$E:=\mathrm{Ker}(p)\cap A= \mathrm{Ker}(p|_A)$$
of $B$ and denote by
 $\widehat{E}^B$
the least normal subgroup of $B$ containing $E$. Now consider the diagram
$$\xymatrix{A\ar[r]^{\widetilde{p}\qquad}\ar[d]\ar@/^2.0pc/[rr]^{p|_A} & A/E\cong p(A)\;\ar[r]\ar[d]^k & C\\
B\ar[r]^{\overline{p}}  & B/\widehat{E}^B & \\
}$$
where $\widetilde{p}$ and $\overline{p}$ are projection maps and $k$ is defined by the assignment $aE\longmapsto a\widehat{E}^B$ or, alternatively, by $p(a)\longmapsto \overline{p}(a)$, depending on whether we see the domain of $k$ to be $A/E$ or $p(A)$. Either way, the map $k$ is well defined since $E\subseteq \widehat{E}^B\cap A=\mathrm{Ker}(\overline{p}|_A)$, and it makes the left rectangle of the diagram commutative. In fact:

\begin{proposition}\label{Grpsliceprop}
The rectangle of the above diagram is a pushout in $\mathsf{Grp}$. Moreover, the homomorphism $k$ is injective, so that $\mathrm{Im}(k)\cong p(A)$, and one has
$$\overline{p}^{-1}(\mathrm{Im}(k))=A\widehat{E}^B\leq B.$$
\end{proposition}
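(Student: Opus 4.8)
The plan is to verify the three assertions in turn, the pushout claim being the substantive one and the other two following by elementary subgroup arithmetic. Throughout I treat $A$ as a subgroup of $B$ via the (unlabelled) left vertical inclusion, so that $E=\mathrm{Ker}(p)\cap A$ and $\widehat{E}^B$ is its normal closure in $B$; in particular $\overline{p}:B\to B/\widehat{E}^B$ is the canonical projection with $\mathrm{Ker}(\overline{p})=\widehat{E}^B$. The one step I expect to carry the real weight is the universal property of the square, and within it the passage from ``$g$ kills $E$'' to ``$g$ kills $\widehat{E}^B$''; this is the sole place where the \emph{normal} closure (rather than an arbitrary subgroup) genuinely enters, everything else being first-isomorphism-theorem bookkeeping.

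For the pushout I would check the universal property directly; it is the standard description of a pushout of a subgroup inclusion along a quotient map. So suppose $g:B\to G$ and $h:A/E\to G$ are homomorphisms with $g(a)=h(aE)$ for all $a\in A$. For $e\in E$ the coset $eE$ is trivial in $A/E$, whence $g(e)=h(eE)=1_G$ and thus $E\subseteq\mathrm{Ker}(g)$. Since $\mathrm{Ker}(g)$ is normal in $B$ and $\widehat{E}^B$ is the least normal subgroup of $B$ containing $E$, we conclude $\widehat{E}^B\subseteq\mathrm{Ker}(g)$, so $g$ factors uniquely as $g=\phi\cdot\overline{p}$ for a homomorphism $\phi:B/\widehat{E}^B\to G$. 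The compatibility $\phi\cdot k=h$ then holds because $k(aE)=\overline{p}(a)$ gives $\phi(k(aE))=g(a)=h(aE)$, and every element of $A/E$ is of this form; uniqueness of $\phi$ is forced by surjectivity of $\overline{p}$.

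The injectivity of $k$ rests on a second use of minimality: since $\mathrm{Ker}(p)$ is a normal subgroup of $B$ containing $E$, we have $\widehat{E}^B\subseteq\mathrm{Ker}(p)$, whence $\widehat{E}^B\cap A\subseteq\mathrm{Ker}(p)\cap A=E$. As $\mathrm{Ker}(k)=(\widehat{E}^B\cap A)/E$, this forces $\mathrm{Ker}(k)$ to be trivial, so $k$ is injective; combined with $A/E\cong p(A)$ (the first isomorphism theorem applied to $p|_A$), this yields $\mathrm{Im}(k)\cong p(A)$.

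Finally, for the preimage formula I would first identify $\mathrm{Im}(k)=\{a\widehat{E}^B\mid a\in A\}=\overline{p}(A)$, and then invoke the routine identity $\overline{p}^{-1}(\overline{p}(A))=A\cdot\mathrm{Ker}(\overline{p})$, valid for any subgroup $A$ of $B$ under a surjective homomorphism. Since $\mathrm{Ker}(\overline{p})=\widehat{E}^B$ is normal in $B$, the product $A\widehat{E}^B$ is indeed a subgroup, and we obtain $\overline{p}^{-1}(\mathrm{Im}(k))=A\widehat{E}^B$ as claimed. I anticipate no obstacle in this last paragraph; the only thing demanding attention throughout is to keep the normality of $\widehat{E}^B$ in view, so that both $B/\widehat{E}^B$ and $A\widehat{E}^B$ are legitimately a group and a subgroup, respectively.
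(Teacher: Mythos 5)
Your proof is correct and follows essentially the same route as the paper's: the pushout is verified by the identical universal-property argument (show $E\subseteq\mathrm{Ker}(g)$, hence $\widehat{E}^B\subseteq\mathrm{Ker}(g)$ by normality of $\mathrm{Ker}(g)$, factor through $\overline{p}$, and use surjectivity of the two projections for compatibility with $k$ and for uniqueness), and the preimage computation is the same standard identity $\overline{p}^{-1}(\overline{p}(A))=A\cdot\mathrm{Ker}(\overline{p})=A\widehat{E}^B$. The only immaterial divergence is in the injectivity of $k$: you deduce $\widehat{E}^B\subseteq\mathrm{Ker}(p)$ from minimality and intersect with $A$, whereas the paper pushes forward along $p$ and invokes $p(\widehat{E}^B)\subseteq\widehat{p(E)}^C=\{e_C\}$ via the closure-operator property of normal hulls --- two phrasings of the same minimality argument.
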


\begin{proof}
For any group homomorphisms $g:B\to X$ and $h: \widetilde{p}(A)\to X$ with $g|_A=h\cdot\widetilde{p}$, the group
$$g(E)=g(\mathrm{Ker}(\widetilde{p}))=h(\widetilde{p}(\mathrm{Ker}(\widetilde{p}))$$
is trivial, {\em i.e.}, $E\subseteq \mathrm{Ker}(g)$. Therefore, $g$ factors uniquely through $\overline{p}$, by a homomorphism $B/\widehat{E}^B\to X$ which, by surjectivity of $\widetilde{p}$, also makes $h$ factor through $k$. This shows that the rectangle is a pushout diagram.

To see that $\mathrm{Ker}(k)$ as a subgroup of $p(A)$ contains only the neutral element $e_C$ of $C$, we note that
$$\mathrm{Ker}(k)=\{pa\mid a\in A,\overline{p}(a)=\widehat{E}^B\}=p(A\cap\widehat{E}^B)\subseteq p(\widehat{E}^B)\subseteq\widehat{p(E)}^C=\{e_C\},$$
where we use the fact that taking normal hulls of subgroups is a (categorical) closure operator \cite{DT95}, and that $E\subseteq\mathrm{Ker}(p)$. For the computation of the inverse image of $\mathrm{Im}(k)$ under $\overline{p}$, we have
\begin{align*}
\overline{p}^{-1}(\mathrm{Im}(k)) & =\{y\in B\mid\exists a\in A: y\widehat{E}^B=a\widehat{E}^B\} \\
& =\{y\in B\mid\exists a\in A: y\in a\widehat{E}^B\} = A\widehat{E}^B.
\end{align*}
\end{proof}

Moving from a subspace inclusion to any homomorphism $f$ over $C$, we can now state:
\begin{theorem}\label{slicedGrpclosure}
The normal closure $\nu_{f/C}$ of a morphism $f:q\to p$ in $\mathsf{Grp}/C$ with homomorphisms $f:A\to B$ and $p: B\to C$ is given by the subgroup inclusion
$$N_{f/C}=\mathrm{Im}(f)\widehat{E}^B\hookrightarrow B,$$
with the subgroup $\mathrm{Im}(f)$ of $B$ and $\widehat{E}^B$ denoting the least normal subgroup of $B$ containing $E=\mathrm{Ker}(p)\cap\mathrm{Im}(f)$. It coincides with the normal closure $\nu_f$ of $f$ in $\mathsf{Grp}$ if $\mathrm{Im}(f)\subseteq \mathrm{Ker}(p)$.
\end{theorem}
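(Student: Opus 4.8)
The plan is to reduce the general case to that of a subgroup inclusion and then to read the normal closure off the pushout already computed in Proposition \ref{Grpsliceprop}, reinterpreting that pushout as a free product with amalgamated subgroup. First I would factor $f\colon A\to B$ in $\mathsf{Grp}$ through its image as $f=\iota\cdot e$, with $e\colon A\twoheadrightarrow\mathrm{Im}(f)$ surjective and $\iota\colon\mathrm{Im}(f)\hookrightarrow B$ the inclusion. Equipping $\mathrm{Im}(f)$ with the restriction $p|_{\mathrm{Im}(f)}\colon\mathrm{Im}(f)\to C$ turns this into a factorization of $f\colon q\to p$ in $\mathsf{Grp}/C$; since the projection $\mathsf{Grp}/C\to\mathsf{Grp}$ is faithful (and faithful functors reflect epimorphisms), $e$ remains an epimorphism over $C$. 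As $\mathsf{Grp}/C$ is finitely complete and finitely cocomplete, Lemma \ref{triviallemma} applies inside it and yields $\nu_{f/C}\cong\nu_{\iota/C}$. Thus I may assume from now on that $f$ is the subgroup inclusion $A\hookrightarrow B$, in which case $E=\mathrm{Ker}(p)\cap A$ matches the group of Proposition \ref{Grpsliceprop} and the target $\mathrm{Im}(f)\widehat{E}^B$ becomes $A\widehat{E}^B$.

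Next I would invoke Proposition \ref{discreteincomma}(1): the normal closure $\nu_{f/C}$ is the pullback of the pushout injection $j_f\colon C\to B+_A C$ along the other injection $\mathrm{in}_B\colon B\to B+_A C$, so that $N_{f/C}=\mathrm{in}_B^{-1}(\mathrm{Im}(j_f))$ as a subgroup of $B$; note $j_f$ is injective, being split by $r=\langle p,1_C\rangle$. Here the span defining $B+_A C$ is $B\hookleftarrow A\xrightarrow{\,q\,}C$, with $q=p|_A$ factoring as $A\twoheadrightarrow p(A)\hookrightarrow C$. Proposition \ref{Grpsliceprop} identifies the pushout of $A\hookrightarrow B$ along the projection $A\twoheadrightarrow p(A)$ as $\overline{p}\colon B\to B/\widehat{E}^B$ together with the \emph{injective} comparison $k\colon p(A)\to B/\widehat{E}^B$. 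Pasting this square with the pushout along $p(A)\hookrightarrow C$ then presents $B+_A C$ as the free product of $B/\widehat{E}^B$ and $C$ with the common subgroup $p(A)$ amalgamated along the two injective legs $k$ and $p(A)\hookrightarrow C$; under this presentation $\mathrm{in}_B$ is $\overline{p}$ followed by the canonical embedding of the factor $B/\widehat{E}^B$.

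The concluding step is to compute $\mathrm{in}_B^{-1}(\mathrm{Im}(j_f))$. For $b\in B$ with $\overline{p}(b)\in\mathrm{Im}(k)=k(p(A))$ the element $\mathrm{in}_B(b)$ lies in the amalgamated subgroup and hence in $\mathrm{Im}(j_f)$, giving the easy inclusion $\overline{p}^{-1}(\mathrm{Im}(k))\subseteq N_{f/C}$; by Proposition \ref{Grpsliceprop} the left-hand side is exactly $A\widehat{E}^B$. For the reverse inclusion — which I expect to be the main obstacle — I would use the classical structure theorem for free products with amalgamated subgroups: inside the amalgam the two factors meet precisely in the amalgamated subgroup $p(A)$. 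Thus $\mathrm{in}_B(b)=j_f(c)$ forces $\overline{p}(b)\in k(p(A))$, whence $b\in\overline{p}^{-1}(\mathrm{Im}(k))=A\widehat{E}^B$, and therefore $N_{f/C}=A\widehat{E}^B=\mathrm{Im}(f)\widehat{E}^B$. (The coarser quotient of $B+_A C$ by the normal closure of $p(A)$, which collapses it to a genuine free product, only detects membership of $\overline{p}(b)$ in the \emph{normal} closure of $p(A)$ in $B/\widehat{E}^B$, so the sharp statement genuinely needs the amalgamation theorem rather than this weaker argument.)

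Finally, if $\mathrm{Im}(f)\subseteq\mathrm{Ker}(p)$ then $E=\mathrm{Ker}(p)\cap\mathrm{Im}(f)=\mathrm{Im}(f)$, so $A\widehat{E}^B=\mathrm{Im}(f)\,\widehat{\mathrm{Im}(f)}^B=\widehat{\mathrm{Im}(f)}^B$, which by Examples \ref{firstexas}(1) is exactly the normal closure $\nu_f$ of $f$ in $\mathsf{Grp}$; hence $\nu_{f/C}\cong\nu_f$ in this case.
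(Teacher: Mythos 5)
Your proof is correct and follows essentially the same route as the paper's: reduction to a subgroup inclusion, the two-stage pushout via Proposition \ref{Grpsliceprop}, and the identification of $B+_AC$ as a free product with amalgamated subgroup (Schreier's strong amalgamation property) so that the pullback of $j_f$ along the other injection reduces to $\overline{p}^{-1}(\mathrm{Im}(k))=A\widehat{E}^B$. Your phrasing of the key step as ``the two factors of the amalgam meet exactly in the amalgamated subgroup'' is just the elementwise form of the paper's statement that the pushout of the two monomorphisms is also a pullback.
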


\begin{proof}
We form the pushout of $f$ along $p\cdot f$ in $\mathsf{Grp}$ given by the outer arrows of the diagram
$$\xymatrix{A\ar@/^1.5pc/[rrr]^{p\cdot f}\ar@{->>}[r]\ar[d]_f & \mathrm{Im}(f)\ar@{->>}[r]^{\widetilde{p}\quad}\ar@{>->}[d] & \mathrm{Im}(p\cdot f)\;\ar@{>->}[d]^k\ar@{>->}[r]& C\ar@{>->}[d]^{j_f}\\
B\ar@{=}[r]\ar@/_1.5pc/[rrr] & B\ar@{->>}[r]^{\overline{p}} & B/\widehat{E}^B\;\ar@{>->}[r]^i & B+_A C\\
}$$

By the surjectivity of $\xymatrix{A\ar@{->>}[r] & \mathrm{Im}(f)}$ one has $B+_AC\cong B+_{\mathrm{Im}(f)}C$, {\em i.e.}, as done earlier without loss of generality, we may assume $A=\mathrm{Im}(f)$, and then our notation for $E, \widetilde{p}, \overline{p}$ and $k$ is as in Proposition \ref{Grpsliceprop}. From there we know that the middle rectangle of the diagram above is a pushout diagram, thus giving us the arrow $i$ completing the entire commutative diagram. Moreover, by the surjectivity of $A\to \mathrm{Im}(p\cdot f)$, also the right inscribed rectangle is a pushout diagram. 

Now, since by Schreier's classical result \cite{Schreier27, Neumann54} $\mathsf{Grp}$ has the so-called {\em strong amalgamation property} \cite{KMPT83}, in this pushout of a pair of monomorphisms also the pushout maps $i$ and $j_f$ are monomorphisms (in fact, $j_f$ splits, with retraction $r=\langle p,1_C\rangle$ -- see Section 6) and, moreover, the pushout diagram is also a pullback diagram; that is, treating $k$ as a subgroup embedding, one may construct $B+_AC$ as the free product of  $B/\widehat{E}^B$ and $C$ with an amalgamated subroup $\mathrm{Im}(p\cdot f)$. Therefore, the pullback of $j_f$ along $i\cdot\overline{p}$ can be computed as the pullback of $k$ along $\overline{p}$ and, by Proposition \ref{Grpsliceprop}, that pullback gives the subgroup inclusion
$N_{f/C}:=\mathrm{Im}(f)\widehat{E}^B\hookrightarrow B$, as claimed.

Trivially, when $\mathrm{Im}(f)\subseteq \mathrm{Ker}(p)$ one has $E=\mathrm{Im}(f)$, and  $N_{f/C}=\widehat{E}^B=\widehat{\mathrm{Im}(f)}^B=N_f$ follows.
\end{proof}

\begin{remark}
It is interesting to observe that, since one has the group homomorphisms $j_f$ and $r$ with $r\cdot j_f=1_C$, the pushout $B+_AC$ can be presented as a {\em semidirect product} $X\rtimes C$, with $C\cong\mathrm{Im}(j_f)\leq B+_AC$ acting on the group $X:=\mathrm{Ker}(r)\trianglelefteq B+_AC$ by conjugation:
$$B+_AC\cong X\rtimes C\,;$$
 see, for example, Section 5.2 of \cite{BorceuxBourn04}.
\end{remark}

Maintaining the notation of the Theorem and considering also Examples \ref{firstexamplesagain}(1) we conclude:

\begin{corollary}
The normal decomposition of $f:q\to p$ in $\mathsf{Grp}/C$ factors its underlying homomorphism $f:A\to B$ as
$$\xymatrix{A\ar[rr]^{\pi_{f/C}\cong\pi_f\qquad} && A/\mathrm{Ker}(f)\ar[rr]^{\kappa_{f/C}} && \mathrm{Im}(f)\widehat{E}^B\ar[rr]^{\;\nu_{f/C}} &&B}.$$
The morphism $f$ is a normal epimorphism in $\mathsf{Grp}/C$ if, and only if, it is a quotient map in $\mathsf{Grp}$, and it is a normal monomorphism if, and only if, the normal hull of $E=\mathrm{Ker}(p)\cap\mathrm{Im}(f)$ in $B$ is contained in $\mathrm{Im}(f)$. For any $C$, the normal decomposition fails to be perfect.
\end{corollary}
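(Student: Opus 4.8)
The plan is to assemble the displayed factorization from results already in hand, read off the two characterisations from the shapes of $\check f$ and $\hat f$, and then exhibit a single failure of composability that survives to every base group $C$. For the factorization, Theorem~\ref{slicedGrpclosure} identifies the normal closure $\nu_{f/C}$ with the inclusion $\mathrm{Im}(f)\widehat{E}^B\hookrightarrow B$. The general slice computation of Section~6 gives $\pi_{f/C}\cong\pi_f$, and by Examples~\ref{firstexas}(1) the normal dual closure $\pi_f$ in $\mathsf{Grp}$ is the projection $A\to A/\mathrm{Ker}(f)$, so that $P_{f/C}\cong A/\mathrm{Ker}(f)\cong\mathrm{Im}(f)$. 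The comparison morphism $\kappa_{f/C}$ is thereby forced to be the inclusion $\mathrm{Im}(f)\hookrightarrow\mathrm{Im}(f)\widehat{E}^B$, which is exactly the asserted factorization.

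Next I would dispatch the two characterisations by inspecting the canonical maps. The map $\check f$ is the inclusion $\mathrm{Im}(f)\hookrightarrow B$, so $f$ is a normal epimorphism (i.e.\ $\check f$ is an isomorphism) precisely when $\mathrm{Im}(f)=B$, that is, when $f$ is a surjective homomorphism. For the mono case one uses that a normal monomorphism is in particular a monomorphism, hence injective; then $A\cong\mathrm{Im}(f)$ and $\hat f$ becomes the inclusion $\mathrm{Im}(f)\hookrightarrow\mathrm{Im}(f)\widehat{E}^B$, which is an isomorphism exactly when $\widehat{E}^B\subseteq\mathrm{Im}(f)$, the stated condition. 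Conversely, an injective $f$ satisfying $\widehat{E}^B\subseteq\mathrm{Im}(f)$ has $N_{f/C}=\mathrm{Im}(f)$ and so is a normal monomorphism.

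For the failure of perfectness, I would first observe that normal epimorphisms are exactly the surjective homomorphisms, hence closed under composition, so perfectness can only break down through $\mathsf{NMono}(\mathsf{Grp}/C)$. I then produce a counterexample valid for arbitrary $C$ by equipping $A=\langle(12)(34)\rangle\cong\mathbb Z/2$, $B=V_4$ and $D=A_4$ with the \emph{trivial} homomorphisms $q,p,r$ into $C$, and taking the inclusions $f\colon A\hookrightarrow B$ and $g\colon B\hookrightarrow D$; these are morphisms over $C$ because trivial maps compose trivially. Since the structure maps are trivial, $\mathrm{Ker}(p)=B$ and $\mathrm{Ker}(r)=D$, so the subgroups $E$ entering the criterion become the full domains, and the normal-mono condition for $f$, $g$ and $g\cdot f$ collapses respectively to $A\trianglelefteq B$, $B\trianglelefteq D$ and $A\trianglelefteq D$. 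The first two hold, while the third fails because the normal closure of $\langle(12)(34)\rangle$ in $A_4$ is all of $V_4$. Thus $f,g\in\mathsf{NMono}(\mathsf{Grp}/C)$ but $g\cdot f\notin\mathsf{NMono}(\mathsf{Grp}/C)$, so this class is not closed under composition and the decompositions are not perfect.

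The one genuinely delicate point is that this failure must be secured \emph{uniformly} in $C$, not merely for the terminal base. The device that achieves this is the choice of trivial structure maps: forcing every map into $C$ to be trivial makes each relevant kernel the whole group, which collapses the slice-normality criterion of Theorem~\ref{slicedGrpclosure} onto the ordinary normality relation of $\mathsf{Grp}$ and thereby transplants the classical non-transitivity of normal subgroups into $\mathsf{Grp}/C$ for every $C$. All that then remains to verify is that these trivial maps are legitimate objects of $\mathsf{Grp}/C$ and that the reduction of the criterion to ordinary normality is correct, both of which are immediate.
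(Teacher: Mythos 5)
Your proposal is correct and takes essentially the same approach as the paper: you assemble the factorization and the two characterizations from Theorem \ref{slicedGrpclosure}, the identity $\pi_{f/C}\cong\pi_f$, and the known description of $\pi_f$ in $\mathsf{Grp}$, and you establish the failure of perfectness by exactly the paper's device of giving all objects the trivial structure map to $C$, which embeds $\mathsf{Grp}$ fully into $\mathsf{Grp}/C$ and collapses the slice-normality criterion to ordinary normality of subgroups. Your explicit witness $\mathbb{Z}/2\trianglelefteq V_4\trianglelefteq A_4$ simply instantiates the non-transitivity of normality in $\mathsf{Grp}$ that the paper invokes via Examples \ref{firstexamplesagain}(1).
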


For the failure of perfectness note that $\mathsf{Grp}$ embeds fully into $\mathsf{Grp}/C$ since every group $A$ is the domain of the trivial morphism $A\to C$.

We finally turn to the coslice category $C/\mathsf{Grp}$ and prove that, for any group $C$, the normal dual closure is discrete:

\begin{proposition}\label{cosliceGrpdualclosure}
Let $f:j\to k$ be a morphism in $C/\mathsf{Grp}$, with group homomorphisms $j:C\to A,\;f:A\to B$, and $k=f\cdot j$. Then the normal dual closure $\pi_{C/f}$ of $f$ is carried by the projection
$A\to A/\mathrm{Ker}(f)\cong\mathrm{Im}(f)$.
\end{proposition}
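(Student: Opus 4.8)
The plan is to identify the relevant pushout by verifying its universal property directly, thereby sidestepping any description of it as a free product with amalgamated subgroup. By the coslice dual of Proposition~\ref{discreteincomma}, the normal dual closure $\pi_{C/f}$ is the pushout injection $A\to P_{C/f}:=C+_{C\times_B A}A$ of the projection $p_f\colon C\times_B A\to C$, $(c,a)\mapsto c$, along the map $f_2\cdot\widetilde{j}\colon C\times_B A\to A$, $(c,a)\mapsto a$, where $C\times_B A=\{(c,a)\in C\times A\mid f(j(c))=f(a)\}$ is the pullback of $k=f\cdot j$ and $f$ over $B$. First I would propose $A/\mathrm{Ker}(f)$ as the pushout, with the canonical projection $\pi_f\colon A\to A/\mathrm{Ker}(f)\cong\mathrm{Im}(f)$ (the normal dual closure of $f$ in $\mathsf{Grp}$, see Examples~\ref{firstexas}(1)) playing the role of $\pi_{C/f}$, and then confirm this guess.

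For the cocone condition I would note that whenever $(c,a)\in C\times_B A$ one has $f(j(c))=f(a)$, so $j(c)\,a^{-1}\in\mathrm{Ker}(f)$ and hence $\pi_f(j(c))=\pi_f(a)$; this is exactly the required identity $(\pi_f\cdot j)\cdot p_f=\pi_f\cdot(f_2\cdot\widetilde{j})$, so $\pi_f$ and $\pi_f\cdot j$ form a cocone under the span. For universality, let $u\colon A\to X$ and $v\colon C\to X$ be homomorphisms with $u\cdot(f_2\cdot\widetilde{j})=v\cdot p_f$, i.e.\ $u(a)=v(c)$ for all $(c,a)\in C\times_B A$. The whole verification is driven by two families of test elements of the pullback: evaluating at $(e_C,a)$ with $a\in\mathrm{Ker}(f)$ yields $u(a)=v(e_C)=e_X$, so $\mathrm{Ker}(f)\subseteq\mathrm{Ker}(u)$ and $u$ factors uniquely as $u=w\cdot\pi_f$; evaluating at $(c,j(c))$ yields $u(j(c))=v(c)$, which is precisely $w\cdot(\pi_f\cdot j)=v$. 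Since $\pi_f$ is surjective, $w$ is the unique mediating morphism, so $A/\mathrm{Ker}(f)$ together with $\pi_f$ does realize $P_{C/f}$ and $\pi_{C/f}\cong\pi_f$.

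The one step that requires genuine insight is recognizing that the two special elements $(e_C,a)$ and $(c,j(c))$ of $C\times_B A$ encode, respectively, the factorization of $u$ through $\pi_f$ and its compatibility with $v$; everything else is routine, and since the argument uses only the description of $\mathrm{Ker}(f)$ it transfers verbatim to $\mathsf{Ab}$, as promised in Examples~\ref{cosliceSetRing}(2). As a robustness check I would also offer the shortcut through the comparison morphism $\sigma\colon P_f\to P_{C/f}$: as $\sigma$ is already a regular epimorphism, it is enough that it be monic, and by the coslice dual of Proposition~\ref{discreteincomma} this holds once the left-hand square, with corners $0\times_B A\cong\mathrm{Ker}(f)$, $C\times_B A$, $0$ and $C$, is a pushout. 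That square is a pushout because the projection $C\times_B A\to C$ splits via $c\mapsto(c,j(c))$, hence is surjective, and its kernel $\{(e_C,a)\mid a\in\mathrm{Ker}(f)\}$ is the image of $\mathrm{Ker}(f)$ and, being a kernel, already normal, so passing to the quotient by its normal closure returns $C$.
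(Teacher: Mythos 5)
Your main argument is correct, and at its computational core it coincides with the paper's: the paper also verifies a pushout universal property in $\mathsf{Grp}$, and its two steps --- setting $z=e_C$ to conclude $\mathrm{Ker}(f)\subseteq\mathrm{Ker}(h)$, and cancelling the surjection $C\times_BA\to C$ --- are exactly your two families of test elements $(e_C,a)$ and $(c,j(c))$. The organization differs, though: the paper first uses the (regular epi, mono)-factorization inherited by $C/\mathsf{Grp}$ together with Lemma \ref{triviallemma} to reduce to the case of surjective $f$, and then proves that the defining pullback square of $f$ along $f\cdot j$ is itself a pushout (i.e., that the dual closure of a regular epimorphism over $C$ is discrete); you instead work with arbitrary $f$ and identify the pushout $C+_{C\times_BA}A$ as $A/\mathrm{Ker}(f)$ outright, so the reduction and the inheritance of factorizations never enter. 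Your version is a bit more self-contained; the paper's reduction yields the crisper ``pullback square is a pushout'' formulation, which is the form in which discreteness is stated in the surrounding text. Your fallback argument via $\sigma$ is also sound and genuinely different in mechanism: the left-hand square is indeed a pushout because the image of $\mathrm{Ker}(f)$ in $C\times_BA$ is precisely $\mathrm{Ker}(p_f)$ (hence already normal, so no normal closure is needed) and $p_f$ is split epic via $c\mapsto(c,j(c))$, after which the sufficient criterion in the coslice dual of Proposition \ref{discreteincomma} gives $\pi_{C/f}\cong\pi_f$ directly.
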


\begin{proof}
The coslice $C/\mathsf{Grp}$ inherits its (regular epi, mono)-factorizations from $\mathsf{Grp}$. Hence, by Lemma \ref{triviallemma}, without loss of generality we may assume that $f:A\to B$ be surjective. It then suffices to prove that the pullback diagram
$$\xymatrix{C\times_BA\ar[r]^{\quad r}\ar[d]_{p} &A\ar[d]^f\\
C\ar[r]^{f\cdot j} &  B\\
}$$
is also a pushout in $\mathsf{Grp}$. Indeed, for any homomorphisms $g:C\to X,\,h:A\to X$ with $g\cdot p=h\cdot r$ one has $g(z)=h(x)$ whenever $z\in C$ and $x\in A$ satisfy $f(jz)=f(x)$. For $z$ neutral in $C$, this shows $\mathrm{Ker}(f)\subseteq\mathrm{Ker}(h)$, so that $h$ factors through $f$, by a (unique) homomorphism $t:B\to X$. Since with $f$ also $p$ is surjective, $t$ satisfies also $t\cdot f\cdot j=g$, as required.
\end{proof}

\begin{corollary}
For any group $C$, normal decompositions in $C/\mathsf{Grp}$ are formed as in $\mathsf{Grp}$. Therefore, a morphism in $C/\mathsf{Grp}$ is a normal mono- or epimorphism if, and only if, it has the respective property in $\mathsf{Grp}$, and normal decompositions in $C/\mathsf{Grp}$ fail to be perfect.
\end{corollary}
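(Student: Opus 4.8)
The plan is to assemble the claim from two facts already available and then to transfer the failure of perfectness from $\mathsf{Grp}$ by means of a full embedding. The first ingredient is the general discussion of coslices in Section 6, from which the normal \emph{closure} in $C/\mathsf{Grp}$ is computed as in $\mathsf{Grp}$, i.e. $\nu_{C/f}\cong\nu_f$; this is the dual of the slice formula $\pi_{f/C}\cong\pi_f$. The second ingredient is Proposition \ref{cosliceGrpdualclosure}, which supplies the remaining, potentially interesting closure and shows it to be discrete as well, namely $\pi_{C/f}\cong\pi_f$. Since the normal decomposition $f=\nu_{C/f}\cdot\kappa_{C/f}\cdot\pi_{C/f}$ is determined up to isomorphism by its two outer factors, with the comparison morphism arising as the unique diagonal of the defining square of Section 4, the agreement of both closures with those computed in $\mathsf{Grp}$ forces $\kappa_{C/f}\cong\kappa_f$ too. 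Hence the entire normal decomposition of $f$ in $C/\mathsf{Grp}$ is carried by its decomposition in $\mathsf{Grp}$.

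The characterization of normal mono- and epimorphisms is then immediate. A morphism $f$ is a normal monomorphism in $C/\mathsf{Grp}$ exactly when the induced map into its normal closure is an isomorphism, i.e. when $\nu_{C/f}\cong f$; by the first step this holds if, and only if, $\nu_f\cong f$, that is, if and only if $f$ is a normal monomorphism in $\mathsf{Grp}$. Dually, using $\pi_{C/f}\cong\pi_f$, the morphism $f$ is a normal epimorphism in $C/\mathsf{Grp}$ precisely when it is one in $\mathsf{Grp}$.

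For the failure of perfectness it suffices to show that normal monomorphisms in $C/\mathsf{Grp}$ are not closed under composition, and this I would obtain from a full embedding $\mathsf{Grp}\hookrightarrow C/\mathsf{Grp}$. Because $\mathsf{Grp}$ is pointed, every group $A$ is the codomain of the \emph{trivial} homomorphism $C\to A$ factoring through the zero object, and postcomposing this trivial map with any homomorphism again yields the trivial map; hence sending $A$ to the object $(C\to A)$ and a homomorphism to itself defines a full, composition-preserving embedding of $\mathsf{Grp}$ into $C/\mathsf{Grp}$. By the characterization just established, a morphism in the image is a normal monomorphism in $C/\mathsf{Grp}$ if and only if it is one in $\mathsf{Grp}$. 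Since normal monomorphisms in $\mathsf{Grp}$ are the inclusions of normal subgroups and these fail to be closed under composition (Examples \ref{firstexamplesagain}(1))---take, for instance, $\mathbb Z/2\trianglelefteq V\trianglelefteq A_4$ with $V$ the Klein four-group, where $\mathbb Z/2$ is not normal in $A_4$---the same failure occurs inside $C/\mathsf{Grp}$, so its normal decompositions cannot be perfect.

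The difficulty here is not computational but one of bookkeeping: I must make sure that agreement of the two outer closures genuinely pins down the comparison morphism, which rests on the uniqueness built into the normal decomposition and on the fact that pushouts and pullbacks in $C/\mathsf{Grp}$ are created from $\mathsf{Grp}$, and that the embedding into the coslice really transports the composition counterexample. Both points hinge on the pointedness of $\mathsf{Grp}$, which furnishes the trivial structure maps, together with Proposition \ref{cosliceGrpdualclosure}; once these are in place the corollary follows with no further calculation.
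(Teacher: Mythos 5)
Your proof is correct and follows essentially the same route as the paper: combine the Section~6 duality $\nu_{C/f}\cong\nu_f$ with Proposition \ref{cosliceGrpdualclosure} (whose projection $A\to A/\mathrm{Ker}(f)$ is exactly $\pi_f$ in $\mathsf{Grp}$), note that monicity of $\nu_f$ and epicity of $\pi_f$ pin down the comparison morphism, and transfer the composition counterexample through the full embedding $\mathsf{Grp}\hookrightarrow C/\mathsf{Grp}$ given by the trivial structure maps $C\to A$ --- the dual of the embedding the paper itself uses for the slice case. Your explicit counterexample $\mathbb{Z}/2\trianglelefteq V\trianglelefteq A_4$ is a fine concrete instantiation of the paper's appeal to Examples \ref{firstexamplesagain}(1).
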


\section{Concluding remarks on the pullback-pushout adjunction}

We finally point out that the pullback-pushout adjunction between cospans and spans in a category provides a common generalized setting for the formation of normal closures and their duals. Ignoring the well-studied broader bi- or double-category-theoretic context, we recall here only that, for a category $\ncatC$ and fixed objects $X$ and $Y$  in $\ncatC$, one has the (ordinary) categories
$$\Span_{\ncatC}(X,Y)\qquad\text{and}\qquad\Cosp_{\ncatC}(X,Y)$$
of spans $X\leftarrow\bullet\rightarrow Y$ and cospans $X\rightarrow\bullet\leftarrow Y$ of arrows in $\ncatC$; their morphisms $s:(u,v)\to(u',v')$ and $t:(p,q)\to(p',q')$ are respectively given by the commutative diagrams
$$\xymatrix{& A\ar[ld]_u\ar[rd]^v\ar[dd]^s & && & B\ar[dd]^t &\\
X && Y &\text{and}& X\ar[ru]^p\ar[rd]_{p'} && Y\ar[lu]_q\ar[ld]^{q'}\\
& A'\ar[lu]^{u'}\ar[ru]_{v'} & && & B'  &\\
}$$
in $\ncatC$. When $\ncatC$ has (tacitly chosen) pullbacks and pushouts, we have the functors
$$ \Po:\Span_{\ncatC}(X,Y)\to\Cosp_{\ncatC}(X,Y)\qquad\text{and}\qquad \Pb:\Cosp_{\ncatC}(X,Y)\to\Span_{\ncatC}(X,Y),$$
respectively assigning to a span its pushout cospan and to a cospan its pullback span. Clearly:

\begin{proposition}\label{pbpoadj}
In the presence of pullbacks and pushouts in $\ncatC$ one has the adjunction $\Po\dashv \Pb$ inducing an idempotent monad on $\Span_{\ncatC}(X,Y)$. Its category of algebras is therefore equivalently described as the full subcategory of {\em Doolittle spans}, i.e., of those spans $(u,v)$ for which the unit of the adjunction gives an isomorphism $\eta:(u,v)\to\Pb(\Po(u,v))$. Dually, the {\em Doolittle cospans}, i.e., the coalgebras of the idempotent comonad induced by the adjunction $\mathrm{Po}\dashv\mathrm{Pb}$, are given by those cospans $(p,q)$ for which the counit $\varepsilon: \mathrm{Po}(\mathrm{Pb}(p,q))\to(p,q)$ is an isomorphism.
\end{proposition}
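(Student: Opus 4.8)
The plan is to establish the adjunction $\Po\dashv\Pb$ by direct appeal to the universal properties of pushouts and pullbacks, and then to invoke the standard fact that an adjunction whose unit (equivalently, counit) is componentwise the canonical comparison into a reflective-type situation induces an idempotent (co)monad. First I would verify the hom-set bijection
$$\Cosp_{\ncatC}(X,Y)\bigl(\Po(u,v),(p,q)\bigr)\;\cong\;\Span_{\ncatC}(X,Y)\bigl((u,v),\Pb(p,q)\bigr)$$
natural in both variables. Given a span $(u,v)$ with apex $A$ and a cospan $(p,q)$ with apex $B$, a morphism $\Po(u,v)\to(p,q)$ is a map from the pushout apex $A'=X+_AY$ to $B$ commuting with the structure maps; by the universal property of the pushout this is the same as a pair of compatible maps $X\to B$, $Y\to B$ agreeing on $A$, which (since the legs of $(p,q)$ are already fixed) amounts to a single map $A\to\Pb(p,q)$ into the pullback apex $X\times_B Y$ commuting with the span legs. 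Chasing this correspondence through the defining diagrams gives the bijection, and naturality is routine once the two universal properties are written out.

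Next I would identify the unit and counit and argue idempotency. The unit $\eta_{(u,v)}:(u,v)\to\Pb(\Po(u,v))$ is the canonical span morphism into the pullback of the pushout, and the counit $\varepsilon_{(p,q)}:\Po(\Pb(p,q))\to(p,q)$ is the canonical cospan morphism out of the pushout of the pullback. The key structural observation is that $\Po$ and $\Pb$ are mutually ``transposed'' constructions via the same square: for a span $(u,v)$, forming its pushout and then pulling back yields a comparison that is always an isomorphism after one more application, because the pushout of a span is already a ``pushout-cospan'' and applying $\Pb\Po$ twice stabilizes. Concretely, idempotency of the monad $\Pb\Po$ follows once I show that $\Po\eta_{(u,v)}$ is an isomorphism for every span, equivalently that $\Po\bigl(\Pb(\Po(u,v))\bigr)\cong\Po(u,v)$; this holds because the pushout of a cospan-pullback-span recovers the original cospan whenever that cospan was itself a pushout, and $\Po(u,v)$ is by construction a pushout. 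The dual statement gives idempotency of the comonad $\Po\Pb$.

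The hard part will be the naturality and the precise verification that $\Po\eta$ (dually $\varepsilon\Pb$) is invertible, since here one must carefully track that the two canonical comparison maps built from the ``pushout-then-pullback'' and ``pullback-then-pushout'' processes cancel correctly. I expect this to reduce to the elementary lemma that for any cospan, pulling back and then pushing out produces a canonical map back to the original cospan, and that this map, precomposed with the comparison arising from a span's pushout, is forced to be an isomorphism by the joint universal properties. Once $\Po\eta$ is shown to be an isomorphism, the general theory of idempotent monads (an adjunction $F\dashv G$ yields an idempotent monad precisely when $F\eta$ is invertible, equivalently the unit is componentwise ``reflective'') immediately delivers both the idempotency and the description of the algebra category as the full subcategory on those spans with $\eta$ invertible, i.e.\ the Doolittle spans; the dual argument for $\varepsilon$ and the comonad then completes the proof without further calculation.
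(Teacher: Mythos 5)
Your proposal is correct and takes essentially the same route as the paper, whose proof is the one-line remark that the adjunction is a routine exercise and that idempotency follows from $\Po\cdot\Pb\cdot\Po\cong\Po$ --- precisely your key step that $\Po\eta$ (equivalently $\varepsilon\Po$) is invertible, established from the joint universal properties. The identification of the (co)algebras with the Doolittle (co)spans then follows, as you say, from the standard theory of idempotent adjunctions.
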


The proof is a routine exercise. Idempotency follows from $\Po\cdot \Pb\cdot\Po\cong\Po$. Squares that are simultaneously pullback and pushout diagrams were called \textit{Doolittle diagrams} by Peter Freyd; in \cite{AHS90} they are called \textit{pulation squares}. We mention here  only one elementary example.

\begin{example}\label{Dolittleexample}
For $\ncatC=\AB$ the category of abelian groups, we present the pushout of $(u:A\to X, v:A\to Y)$ by $B:= (X\oplus Y)/\mathrm{Im}(u-v)$, thus considering $u$ and $v$ as $(X\oplus Y)$-valued. The  pullback of the restricted projections $(p:X\to B, q:Y\to B)$ may be presented by the subgroup $\mathrm{Im}(u-v)$ of $X\oplus Y$. Since the canonical morphism $\eta: A\to \mathrm{Im}(u-v),$ with $a\mapsto ua+va$ is surjective, $(u,v)$ is a Doolittle span precisely when $(u,v)$ is a monic pair, {\em i.e.}, when $\mathrm{Ker}u\,\cap\, \mathrm{Ker}v=0$.

Dually, given a cospan $(p:X\to B, q:Y\to B)$, we have the projections $u,v$ of its pullback $A=\{(x,y)\mid px=qy\}$ and form their pushout $X\oplus Y/\mathrm{Im}(u-v)$. The canonical map $\varepsilon:X\oplus Y/\mathrm{Im}(u-v)\to B$ sending the coset of $(x,y)$ to $px+qy$ has a trivial kernel. This makes $(p,q)$ a Doolittle cospan precisely when $\varepsilon$ is surjective, {\em i.e.}, when $B=\mathrm{Im}p+\mathrm{Im}q$.
\end{example}


For all objects $A,B$ in our finitely complete and finitely cocomplete category $\CC$, the Proposition gives in particular the adjunctions
$$\xymatrix{
\CC/B\ar[r]^{\cong \quad} & \mathsf{Span}(B,1)\ar@/^0.2pc/[r]^{\mathrm{Po}}& \mathsf{Cosp}(B,1)\ar@/^0.2pc/[l]^{\mathrm{Pb}} && \mathsf{Span}(0,A)\ar@/^0.2pc/[r]^{\mathrm{Po}} & \mathsf{Cosp}(0,A)\ar[r]^{\quad\cong}\ar@/^0.2pc/[l]^{\mathrm{Pb}} & A/\CC
}$$
inducing an idempotent monad on $\CC/B$ and an idempotent comonad on $A/\CC$, respectively. For a morphism $f:A\to B$ in $\CC$, considered as an object of $\CC/B$ or of $A/\CC$, the unit $\eta$ of the first adjunction at $f$ gives (in the notation of Section 3) the ($\CC/B$)-morphism $f\to\nu_f$ which is carried by $\hat{f}:A\to N_f$ in $\CC$, and the counit $\varepsilon$ of the second adjunction at $f$ gives the $A/\CC$-morphism $\pi_f\to f$, carried by $\check{f}:P_f\to B$ in $\CC$.

Hence, from this perspective, $\mathrm{Pb}(\mathrm{Po}(u,v))$ could be considered as the normal closure of an arbitrary span $(u,v)$, and $\mathrm{Po}(\mathrm{Pb}(p,q))$ as the normal dual closure of a cospan $(p,q)$, but this generalized categorical environment no longer allows for the formation of comparison morphisms in the same category.

$$ $$

{\small Dept. of Mathematics and Applied Math., University of the Free State, Bloemfontein, South Africa\\ jansenrs@ufs.ac.ca

Dept. of Mathematics, School of Natural Sciences, National University of Sciences and Technology, Islamabad, Pakistan, and 
Dept. of Mathematics and Statistics, York University, Toronto ON, Canada\\
qasim99956@gmail.com

Dept. of Mathematics and Statistics, York University, Toronto ON, Canada\\
tholen@yorku.ca}

\end{document}